\DeclareMathOperator{\ve}{\varepsilon}
\newtheorem*{claim}{Claim}
\newtheorem*{theorem*}{Theorem}
\newtheorem{theorem}{Theorem}[section]
\newtheorem{proposition}[theorem]{Proposition}
\newtheorem{lemma}[theorem]{Lemma}
\newtheorem{propdef}[theorem]{Proposition-Definition}
\newtheorem{corollary}[theorem]{Corollary}
\theoremstyle{definition}
\newtheorem{notation}[theorem]{Notation}
\newtheorem{definition}[theorem]{Definition}
\newtheorem{remark}[theorem]{Remark}
\newtheorem{example}[theorem]{Example}
\newtheorem{question}[theorem]{Question}
\newcommand{\mc}{\mathcal}
\newcommand{\mb}{\mathbb}
\newcommand{\on}{\operatorname}
\newenvironment{claimproof}[1]{\par\noindent\emph{Proof of Claim.}\space#1}{}
\newcommand*{\sheafhom}{\mathscr{H}\kern -2.5pt om}
\newcommand*{\sheafext}{\mathscr{E}\kern -1.5pt xt}
\newcommand\cE{{\mathcal{E}}}
\newcommand\cF{{\mathcal{F}}}
\newcommand\cG{{\mathcal{G}}}
\newcommand\cH{{\mathcal{H}}}
\newcommand\cI{{\mathcal{I}}}
\newcommand\cM{{\mathcal{M}}}
\newcommand\cN{{\mathcal{N}}}
\newcommand\cO{{\mathcal{O}}}
\newcommand\cT{{\mathcal{T}}}
\newcommand\bA{{\mathbb A}}
\newcommand\bC{{\mathbb C}}
\newcommand\bK{{\mathbb K}}
\newcommand\bN{{\mathbb N}}
\newcommand\bQ{{\mathbb Q}}
\newcommand\bR{{\mathbb R}}
\newcommand\bZ{{\mathbb Z}}
\newcommand{\vp}{\varphi}
\newcommand{\diff}{\textnormal{d}}
\title{On toric and toroidal foliations}
\author{Chih-Wei Chang, Yen-An Chen}
\address{Department of Mathematics, National Taiwan University, Taipei, 106, Taiwan}
\email{cwchang0219@ntu.edu.tw}
\address{School of Mathematics, Korea Institute for Advanced Study, 85 Hoegi-ro, Dongdaemun-gu, Seoul 02455, Republic of Korea}
\email{yachen@kias.re.kr}
\begin{document}

\maketitle

\begin{abstract}
    In this paper, we provide toric descriptions for various foliation singularities on toric varieties, especially for non-dicritical singularities and F-dlt singularities. 
We then show that the toric foliated minimal model program works by demonstrating that non-dicritical singularities and F-dlt singularities are preserved.
\end{abstract}

\tableofcontents

\section{Introduction}\label{sec:intro}
In recent years, there have been numerous advancements in the field of birational geometry of foliations. 
Notably, it has been proven that the minimal model program works for foliations of any rank on a normal variety of dimension at most three (for example, see \cite{mendes2000kodaira, brunella2015birational, mcquillan2005semi, mcquillan2008canonical, spicer2020higher, CS, cascini2025mmp, spicer2022local}), as well as for algebraically integrable foliations (\cite{ambro2021positivity, cascini2023mmp, CHLX, LMX, M7}).

It is natural to ask for the applicability of the foliated minimal model program (FMMP) to toric foliations. 
As $\bQ$-factorial projective toric varieties are Mori dream spaces, the minimal model program works for any Weil divisor $D$ (see \cite{hu2000mori}), and any singularities involving only discrepancies, such as canonical singularities, are preserved under the FMMP. 
Therefore, the main goal for the FMMP for toric foliations is to show that the non-dicritical singularities (see Definition~\ref{nddef}) are preserved under the FMMP. 
In \cite{spicer2020higher}, C. Spicer showed that the FMMP works for toric foliations of corank one with only canonical and non-dicritical singularities. 
 
In this paper, we provide a comprehensive affirmative answer. We first characterize non-dicritical toric foliations in terms of combinatorial data. If the toric variety $X_\Sigma$ is defined by a fan $\Sigma$ in $N\otimes\mb R$ where $N\simeq\mb Z^n$ is a lattice, a toric foliation on $X_\Sigma$ corresponds to a complex vector subspace $W\subseteq N\otimes\mb C$ and is denoted by $\mc F_W$ (\cite{wang2023toric} and Proposition~\ref{1-1}). 

\begin{theorem}[{cf. Theorem~\ref{ND_toroidal}}]
    Let $\cF=\cF_W$ be a toric foliation on a toric variety $X_\Sigma$ of a fan $\Sigma$ in $N\otimes\bR$ where $W\subseteq N\otimes\bC$ is a complex vector subspace. 
    Then $\cF_W$ is non-dicritical if and only if $(\Sigma,W)$ satisfies the condition $(\dagger)$ (see Definition~\ref{defn_dagger}). 
\end{theorem}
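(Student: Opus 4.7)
\medskip

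\noindent\emph{Proof proposal.}
The plan is to convert the definition of non-dicriticality into a purely combinatorial statement by exploiting that every relevant exceptional divisor over a toric variety can be realized on a toric modification. By definition, $\cF_W$ is non-dicritical on $X_\Sigma$ precisely when, for every birational morphism $\pi\colon Y\to X_\Sigma$, every $\pi$-exceptional prime divisor $E$ is invariant under the pulled-back foliation $\pi^{-1}\cF_W$. The first step is therefore a \emph{reduction to equivariant models}: using equivariant resolution of singularities, any $\pi$ is dominated by a toric birational morphism $X_{\Sigma'}\to X_\Sigma$ coming from a refinement $\Sigma'$ of $\Sigma$, and on such a model the transformed foliation is again the toric foliation $\cF_W$ on $X_{\Sigma'}$ by Proposition~\ref{1-1}. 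Hence non-dicriticality is equivalent to asking that, for every refinement $\Sigma'$ of $\Sigma$ and every ray $\rho'\in\Sigma'(1)\setminus\Sigma(1)$, the toric prime divisor $D_{\rho'}$ is $\cF_W$-invariant.

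Next I invoke the combinatorial description of invariant toric divisors established earlier in the paper: $D_{\rho'}$ is $\cF_W$-invariant if and only if the ray $\rho'$ (viewed inside $N_\bC$) is contained in $W$. Thus the problem becomes: for every refinement $\Sigma'$ of $\Sigma$, every new ray lies in $W$. Condition $(\dagger)$ of Definition~\ref{defn_dagger} is a local combinatorial compatibility between each cone $\sigma\in\Sigma$ and the subspace $W$; the two directions of the theorem then correspond to showing that this local condition controls exactly which rays can appear in refinements of $\sigma$.

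For the implication $(\dagger)\Rightarrow$ non-dicritical, I fix $\sigma\in\Sigma$ and a new ray $\rho'\subseteq\sigma$ in a refinement. Writing $\rho'=\sum_i t_i v_i$ with $v_i$ the primitive generators of $\sigma(1)$ and $t_i\ge 0$, the hypothesis $(\dagger)$ will force the coefficients $t_i$ to vanish on those rays of $\sigma$ that are not already in $W$. Hence $\rho'\in W$, so $D_{\rho'}$ is invariant. For the converse, I argue contrapositively: if $(\dagger)$ fails at some $\sigma\in\Sigma$, I produce a primitive lattice vector $v\in\sigma\setminus W$ witnessing the failure, take the star subdivision of $\Sigma$ along $\bR_{\ge 0}v$, and observe that the resulting exceptional divisor is not $\cF_W$-invariant, so $\cF_W$ is dicritical.

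The main obstacle I foresee is the first step, namely the reduction from arbitrary birational models to toric refinements. A naive use of equivariant resolution is not quite enough: one must also check that dicriticality is preserved under dominations, so that detecting non-invariant exceptional divisors on some toric model is equivalent to detecting them on a general model. I would handle this either by a valuative formulation, showing that the property of a divisorial valuation being $\cF_W$-invariant depends only on its restriction to the torus-invariant part of the function field, or by combining equivariant desingularization with the birational invariance of the transformed foliation. Once this reduction is in place, the two implications above should follow by direct cone-by-cone analysis of $(\dagger)$.
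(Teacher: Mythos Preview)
Your proposal has a genuine gap, and it is exactly the obstacle you flagged at the end: the reduction from arbitrary birational models to toric refinements is not a technicality, it is false. Example~\ref{eg_dicritical_2} is designed to show this. There $W\cap N=\bZ(1,1,0)$, so every ray in $\on{Relint}(\sigma)$ lies outside $W$ and every toric exceptional divisor over the origin is $\cF_W$-invariant; yet the blow-up at the non-torus-fixed point $(0,0,1)$ extracts a non-invariant divisor with zero-dimensional center. No equivariant resolution or valuative reformulation can help, because this divisorial valuation is simply not torus-invariant. The paper's proof circumvents this by passing to \emph{semi-local models} (Lemma~\ref{local_model_lem}): around a general point of an arbitrary subvariety $Z$ one finds formal coordinates in which the foliation is still toric and $Z$ becomes a torus-invariant stratum, so that blowing up $Z$ is a star subdivision in those coordinates even though $Z$ was not torus-invariant on $X_\Sigma$.

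Two further errors compound the problem. First, you have the invariance criterion backwards: by Corollary~\ref{basics}, $D_\rho$ is $\cF_W$-invariant iff $\rho\nsubseteq W$, not iff $\rho\subseteq W$. Second, you have dropped the dimension condition in Definition~\ref{nddef}: non-dicriticality only constrains exceptional divisors whose center has dimension at most $c-1$. With the correct sign and definition, the direction ``$(\dagger)$ fails $\Rightarrow$ dicritical'' via toric blow-ups would give a non-invariant $D_\rho$ with center $V_\tau$ for some $\tau$ with $\on{Relint}(\tau)\cap W\cap N\neq\emptyset$ and $\tau\nsubseteq W$; but if $\dim\tau\leq r$ then $\dim V_\tau\geq c$ and this witnesses nothing. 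The paper handles precisely this case by a further (non-toric) blow-up along a general subvariety of the right codimension inside the corresponding stratum.
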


Here we introduce a version of non-dicritical singularities for foliations of any rank, which generalizes \cite[Definition 2.10]{CS} and \cite[paragraph before Lemma 2.6]{cascini2025mmp} to any dimension and any rank (see Definition~\ref{nddef}).
It is worth noting that there is another version of non-dicritical singularities in \cite[Definition 3.6]{wang2023toric}. 
We show that (see Proposition~\ref{wang_defn}) Wang's definition and ours coincide on $\mb Q$-factorial toric varieties.
Therefore, we ask the following question:

\begin{question}
    Does Definition~\ref{nddef} agree with \cite[Definition 3.6]{wang2023toric} on any normal variety?
\end{question}

Then we provide toric descriptions for various singularities and study the relations among them.
To determine the singularities, we need to estimate the discrepancies of the exceptional divisors that might not be extracted by a sequence of toric birational morphisms; in other words, 
it is necessary to consider the blow-up along a non-torus-invariant center.
As a result, we get a foliation which is not toric but very close to being so, leading to the discussion of toroidal foliations and toroidal foliated pairs (see Definition~\ref{defn_toroidal}) inspired by 
\cite[Section 3.2]{ambro2021positivity}. 

\begin{proposition}[{$=$ Proposition~\ref{lc}}]
    Let $(\cF,D=\sum_id_iD_i)$ be a toroidal foliated pair (see Definition~\ref{defn_toroidal}) on a normal variety $X$. 
    Then $(\cF,D)$ is log canonical if and only if $d_i\leq\iota(D_i)$ for all $i$. 

    In particular, a toric foliated pair $(\cF_W,D=\sum_{\rho\in\Sigma(1)} d_\rho D_\rho)$ on a toric variety $X_\Sigma$ of a fan $\Sigma$ in $N\otimes\bR$ is log canonical if and only if $d_\rho\leq 1$ for $\rho\subseteq W$ and $d_\rho\leq 0$ for $\rho\nsubseteq W$. 
\end{proposition}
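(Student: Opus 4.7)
The forward direction is immediate: applying the definition of log canonical with $\pi = \mr{id}_X$ and $E = D_i$ gives $-d_i = a(D_i, \cF, \Delta) \geq -\iota(D_i)$, i.e., $d_i \leq \iota(D_i)$.

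For the converse, my plan is to reduce to a toric local model via the toroidal structure, then derive an explicit discrepancy formula. Since being log canonical is local in the formal/étale topology, the toroidal hypothesis lets me replace $(X, \cF, \Delta)$ at each closed point by an affine toric model $(X_\sigma, \cF_W, \sum_{\rho \preceq \sigma} d_\rho D_\rho)$. The reduction preserves both the coefficients $d_\rho$ and the invariance data $\iota(D_\rho)$, as both are combinatorial in nature.

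The core computation is the following discrepancy formula on the toric model. Using the identification
\[
K_{\cF_W} = -\sum_{u_\rho \in W} D_\rho = -\sum_\rho \iota(D_\rho)\, D_\rho,
\]
which reflects that $D_\rho$ is $\cF_W$-invariant iff $u_\rho \notin W$, let $v = \sum_{\rho \preceq \tau} a_\rho u_\rho$ with $a_\rho > 0$ be a primitive lattice vector in the relative interior of a face $\tau \preceq \sigma$, and let $E_v$ be the toric exceptional divisor of the star subdivision at $v$ (so $\iota(E_v) = 1$ iff $v \in W$). Combining the standard toric pullback $\pi^* D_\rho = \widetilde{D_\rho} + a_\rho E_v$ with the canonical formula for $K_{\cF_W}$ on the subdivided fan, a short calculation yields
\[
a(E_v, \cF_W, \Delta) \;=\; -\iota(E_v) + \sum_\rho a_\rho \bigl( \iota(D_\rho) - d_\rho \bigr).
\]
Under the hypothesis $d_\rho \leq \iota(D_\rho)$, each summand in the sum is non-negative (as $a_\rho \geq 0$), so $a(E_v, \cF_W, \Delta) \geq -\iota(E_v)$, confirming the log canonical inequality on every toric exceptional divisor.

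To finish, non-toric divisorial valuations over the toric local model are handled by passing to a toric log resolution $\mu: Y \to X_\sigma$ of $(X_\sigma, \sum D_\rho)$: every $\mu$-exceptional divisor and strict transform on $Y$ is toric, so log canonicity of the pair can be detected on $Y$ by the formula above; any further divisor $F$ extracted by a birational model over $Y$ has discrepancy bounded below by the toric ones via the chain rule of discrepancies. The main technical obstacle will be the careful derivation of the foliated discrepancy formula, especially the bookkeeping of the $-\iota(E_v)$ term, which is the distinctive feature of the foliated computation compared to the classical toric one; the toroidal-to-toric reduction is routine provided the formal isomorphism in Definition~\ref{defn_toroidal} transports the invariance data faithfully.
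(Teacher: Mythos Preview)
Your proposal has two genuine gaps.

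\textbf{Forward direction.} In this paper, discrepancies $a(E,\cF,\Delta)$ are defined only for $\pi$-exceptional divisors, so taking $\pi=\mr{id}_X$ gives no information at all about the components $D_i$ of $\Delta$. The forward implication is therefore not immediate; the paper proves it via Proposition~\ref{not_lc_prop}, which constructs an explicit sequence of blow-ups producing exceptional divisors with discrepancy tending to $-\infty$ whenever some $d_i>\iota(D_i)$.

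\textbf{Converse.} Your toric discrepancy formula is correct (it is essentially Proposition~\ref{discrepancy}), and it does show that every \emph{toric} exceptional divisor satisfies the log canonical inequality. But the sentence ``any further divisor $F$ extracted by a birational model over $Y$ has discrepancy bounded below by the toric ones via the chain rule of discrepancies'' is precisely the hard step, and no such chain rule exists. A non-toric valuation over a smooth toric variety is not controlled by the toric ones in any automatic way; this is the content of Theorem~\ref{Flogsm_lc}, whose proof requires the machinery of semi-local models (Lemma~\ref{local_model_lem}) to put an arbitrary smooth blow-up center into toric form, together with an induction on the number of blow-ups (Lemma~\ref{disc_fol_stratum}). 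Moreover, a classical toric log resolution of $(X_\sigma,\sum D_\rho)$ need not make the \emph{foliated} pair log smooth: one must also arrange the condition~$(\dagger)$ via Proposition~\ref{dagger_resolution}. The paper's converse argument goes: foliated log resolution (Theorem~\ref{FLogRes}) $\Rightarrow$ coefficients on the resolution still satisfy $\leq\iota$ (your formula) $\Rightarrow$ foliated log smooth with such coefficients is log canonical (Theorem~\ref{Flogsm_lc}). Your outline has the first two steps but replaces the third with an unjustified assertion.
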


\begin{proposition}[{$=$ Proposition~\ref{can_term_toric}}]
    Let $(\cF,0)$ be a toroidal foliated pair on a normal variety $X$ with associated extended complex $(\Delta,W)$. 
    Then we have the following:
    \begin{enumerate}
        \item For each $\sigma\in\Delta$, $\Pi_{\sigma,\,W(\sigma)}$ has a unique facet not containing the origin. 
        \item $\cF$ is canonical if and only if for any $\sigma\in\Delta$, the only non-zero elements of $\Pi_{\sigma,\,W(\sigma)}\cap W(\sigma)\cap N_\sigma$ are contained in $\Pi_{\sigma,\,W(\sigma)}\cap\{u\mid\phi_{K_\cF}(u)=1\}$, the facet of $\Pi_{\sigma,\,W(\sigma)}$ not containing the origin. 
        \item For any $\sigma\in\Delta$, $\cF$ is terminal at the generic point of $V(\sigma)$ if and only if $\Pi_{\sigma,\,W(\sigma)}\neq\sigma$ and $\on{Relint}(\sigma)\cap\Pi_{\sigma,\,W(\sigma)}\cap W(\sigma)\cap N_\sigma=\emptyset$. 
    \end{enumerate}
\end{proposition}

\begin{proposition}[{$=$ Proposition~\ref{Fdlt_prop}}]
    Let $(\cF,D)$ be a toroidal foliated pair on a normal variety $X$ with the associated extended complex $(\Delta,W)$. 
    Suppose $D$ is effective. 
    Then $(\cF,D)$ is F-dlt (see Definition~\ref{defn_F_dlt}) if and only if the following statements hold true:
    \begin{enumerate}
        \item $\on{Supp}(D)\subseteq\bigcup_{\rho\subseteq W(\rho),\,\rho\in\Delta(1)}V(\rho)$ and $0\leq\on{mult}_{V(\rho)}D\leq 1$ for any $\rho\in\Delta(1)$. 
        \item For any $\sigma\in\Delta$ satisfying $\phi_{(K_{\cF}+D)}\vert_\sigma=0$, we have $\sigma$ is simplicial and $(\sigma,W(\sigma))$ is non-dicritical. The latter means that either $\on{Relint}(\sigma)\cap W(\sigma)\cap N_\sigma=\emptyset$ or $\sigma\subseteq W(\sigma)$. 
    \end{enumerate}
\end{proposition}

Utilizing the toric descriptions above, we conclude the following relations among various singularities:
\begin{theorem}
    Let $(\cF,D)$ be a toroidal foliated pair on a normal variety $X$. 
    \begin{enumerate}
        \item (Proposition~\ref{SimpleCanonical}) Suppose $(\cF,0)$ is a foliated log smooth pair on a smooth variety $X$. 
        Then $\cF$ has only canonical singularities.
        \item (Proposition~\ref{Fdlt_ND_prop}) If $(\cF,D)$ is F-dlt, then $\cF$ is non-dicritical. 
        \item (Proposition~\ref{CanImplyND}) If $(\cF,D)$ is canonical, then $\cF$ is non-dicritical. 
    \end{enumerate}
\end{theorem}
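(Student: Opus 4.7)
The plan is to handle each of the three parts separately by reducing, through the local toroidal structure, to the combinatorial characterizations of singularities already available for toric foliated pairs. In every case the statement is local on $X$, so after passing to a toroidal chart we may assume we are working on a toric variety $X_\Sigma$ with foliation $\cF_W$ and an invariant boundary $\Delta$.

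For Part (1), I would work in a chart where $(\cF,0)$ is modeled by a toric foliated pair $(\cF_W,0)$ on a smooth toric variety $X_\Sigma$, with $W$ spanned by a subset of a lattice basis adapted to $\Sigma$. By Proposition~\ref{can_term_toric}(1), it then suffices to verify the combinatorial criterion for canonicity: for a smooth cone $\sigma$ with primitive generators $e_1,\ldots,e_k$ forming a partial lattice basis and $W=\on{Span}_\bC(e_{i_1},\ldots,e_{i_r})$, the polytope $\Pi_{\sigma,W}$ is a coordinate simplex, so its only non-zero lattice points in $W\cap N$ are precisely the rays $e_{i_j}$ of $\sigma$ lying in $W$, all of which sit on the facet of $\Pi_{\sigma,W}$ opposite the origin. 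This is a direct computation.

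For Parts (2) and (3), the conclusion is that $\cF$ is non-dicritical, which by Theorem~\ref{ND_toroidal} translates (after toroidal reduction via Corollary~\ref{can_toroidal} or Corollary~\ref{Fdlt_toroidal}) into showing that the toric model $(\cF_W,\Delta)$ satisfies condition $(\dagger)$. My plan is to argue by contradiction: suppose some $\sigma\in\Sigma$ is dicritical, so $\sigma\nsubseteq W$ yet $\on{Relint}(\sigma)\cap W\cap N\neq\emptyset$, and pick a primitive lattice vector $v$ in this intersection. The toric star subdivision along $\rho_v=\bR_{\geq 0}v$ extracts an exceptional divisor $E$ that is both torus-invariant and foliation-invariant, while lying over the non-foliation-invariant orbit $V_\sigma$. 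For Part (3), unwinding the discrepancy of $E$ in terms of lattice data shows that $v$ (suitably rescaled) produces a non-zero lattice point of $\Pi_{\sigma,W}\cap W\cap N$ lying off the facet opposite the origin, contradicting Proposition~\ref{can_term_toric}(1). For Part (2), one performs further star subdivisions along rays contained in $W$ to reduce to a cone $\sigma'$ containing $v$ on which $\phi_{K_{\cF_W}+\Delta}|_{\sigma'}=0$; Proposition~\ref{Fdlt_prop}(2) then forces $(\sigma',W)$ to be non-dicritical, contradicting the presence of $v$ in $\on{Relint}(\sigma')\cap W\cap N$.

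The main obstacle will be Part (2): the F-dlt criterion only constrains cones on which the piecewise linear function $\phi_{K_{\cF_W}+\Delta}$ vanishes, so extending non-dicriticality to an arbitrary cone requires a careful analysis of how F-dlt propagates under toric birational modifications. I expect this step to hinge on showing that a suitable sequence of star subdivisions along rays in $W$ can bring any hypothetical dicritical cone into the vanishing locus of $\phi$ while preserving the F-dlt hypothesis, and this is where most of the technical work should lie.
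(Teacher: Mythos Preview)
Your proposal has the right high-level strategy of reducing to toric combinatorics, but there are genuine gaps in Parts (1) and (3), and Part (2) takes a route that is both different from the paper's and unclear.

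\textbf{Part (1): circularity and a misreading of ``foliated log smooth''.} You propose to invoke the combinatorial canonical criterion of Proposition~\ref{can_term_toric} to conclude. But the proof of Proposition~\ref{can_term_toric}(2) \emph{uses} Proposition~\ref{SimpleCanonical} (it passes to a smooth refinement satisfying $(\dagger)$ and then cites SimpleCanonical to bound discrepancies of non-toric exceptional divisors). So your argument is circular as stated. Separately, your description ``$W=\on{Span}_\bC(e_{i_1},\ldots,e_{i_r})$'' is not what foliated log smooth means: Definition~\ref{LogSmAnyRank} only requires $(\dagger)$, and e.g.\ $W=\bC(e_1+\sqrt{2}\,e_2)$ on $\on{Cone}(e_1,e_2)$ satisfies $(\dagger)$ without being spanned by coordinate vectors. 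The paper instead proves SimpleCanonical directly: Zariski's lemma reduces to a single blow-up along a smooth center $Z$; Lemma~\ref{local_model_lem} produces a semi-local model where $Z$ becomes a coordinate stratum $V_\tau$; then the log discrepancy equals $\phi_{K_{\cF_{W_p}}}(u_0)\geq 0$, and if $\iota(E_0)=1$ the condition $(\dagger)$ forces $e_i\in W_p$ for all $i\leq s$, giving discrepancy $\geq s-1\geq 1$.

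\textbf{Part (3): the key construction is missing.} You pick a primitive $v\in\on{Relint}(\sigma)\cap W\cap N$ and say a ``suitable rescaling'' of $v$ lies in $\Pi_{\sigma,W}\cap W\cap N$ off the facet. Rescaling alone does not work: $v$ need not lie in $\Pi_{\sigma,W}$ at all, and if it does it may well sit on the facet. The paper's move is to \emph{subtract} the $W$-ray components: writing $v=\sum a_iv_i$ with $v_i\in W$ iff $i\leq\ell$, set $v'=\sum_{i>\ell}a_iv_i=v-\sum_{i\leq\ell}a_iv_i\in W$. Then $\phi_{K_{\cF_W}}(v')=0$, so a suitable integer multiple $nv'$ lies in $\Pi_{\sigma,W}\cap W\cap N\setminus\{0\}$ but not on the facet $\{\phi=1\}$, contradicting Proposition~\ref{can_term_toric}(2). (Also: since $v\in W$, the extracted divisor $D_{\rho_v}$ is \emph{non}-$\cF_W$-invariant by Corollary~\ref{basics}, not invariant as you wrote.)

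\textbf{Part (2): a different and murkier route.} Your plan is to star-subdivide along rays in $W$ until the dicritical lattice point $v$ sits in a cone where $\phi_{K_{\cF_W}+\Delta}$ vanishes, then invoke Proposition~\ref{Fdlt_prop}(2). The difficulty you flag is real: after subdivision you are on a new fan, and Proposition~\ref{Fdlt_prop}(2) only constrains cones of the \emph{original} $\Sigma$, so you would need to show the F-dlt property survives these subdivisions, which is not automatic. The paper avoids this entirely: it works on $\Sigma$ itself and argues by induction on $\dim\sigma$. For $\sigma$ with $\phi\vert_\sigma\neq 0$, the vanishing face $\sigma_0=\{u\in\sigma:\phi(u)=0\}$ is a proper face, hence non-dicritical by induction; since every ray of $\sigma$ outside $\sigma_0$ lies in $W$ (because $0<\phi(v_i)\leq\iota(D_{\rho_i})$), one subtracts those components from $v$ to land in $\sigma_0\cap W\cap N_\bQ$ and derives a contradiction. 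This induction is both shorter and stays on the original fan.
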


Then we show the existence of foliated log resolution (Theorem~\ref{FLogRes}) and F-dlt modification (Theorem~\ref{Fdlt_modification}) for toroidal foliated pairs of any rank and of any dimension.

Based on the combinatorial characterizations of singularities, we show that the FMMP works for log canonical toric foliated pairs on complete $\bQ$-factorial toric varieties, that is, non-dicritical singularities and F-dlt singularities are preserved under the FMMP. Furthermore, we show that the cone theorem holds true.

\begin{theorem}[{Propositions~\ref{div_contr}, \ref{MFS}, and \ref{flip})}]
    Let $(\cF_W,D)$ be a log canonical toric foliated pair on a complete $\bQ$-factorial toric variety $X_\Sigma$ with $D\geq 0$. 
    Then the FMMP works for $(\cF,D)$. 
    Moreover, being non-dicritical (resp. F-dlt) is preserved under the FMMP. 
\end{theorem}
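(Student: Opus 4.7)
The plan is to reduce the theorem to a combinatorial verification on fans, invoking the classical toric MMP for $\bQ$-Cartier Weil divisors (available because a complete $\bQ$-factorial toric variety is a Mori dream space) together with the combinatorial characterizations of singularities established earlier in the paper. Since $X_\Sigma$ is complete and $\bQ$-factorial, $\overline{\on{NE}}(X_\Sigma)$ is rational polyhedral, generated by the classes $[C_\tau]$ of torus-invariant curves indexed by walls $\tau\in\Sigma(n-1)$. As $K_{\cF_W}+\Delta$ is $\bQ$-Cartier, each intersection number $(K_{\cF_W}+\Delta)\cdot C_\tau$ is computable from the corresponding wall relation, which yields the foliated cone theorem at once. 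For any $(K_{\cF_W}+\Delta)$-negative extremal ray $R=\bR_{\geq 0}[C_\tau]$, the toric wall-crossing associated to $\tau$ produces a toric map which is either a divisorial contraction $f\colon X_\Sigma\to X_{\Sigma'}$, a Mori fiber space, or a small contraction followed by the flip $X_\Sigma\dashrightarrow X_{\Sigma^+}$ obtained via the opposite subdivision; in every case the foliation on the target is again toric, defined by the \emph{same} subspace $W\subseteq N_\bC$, and the strict transform or pushforward of $\Delta$ remains torus-invariant.

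\textbf{Preservation of non-dicritical singularities.} By Theorem~\ref{ND_toroidal}, it suffices to show that condition $(\dagger)$ is inherited by the new fan. Since each operation alters only the finitely many cones around the wall $\tau$, the verification is local, and I analyze the three cases separately. Using the wall relation together with the $(K_{\cF_W}+\Delta)$-negativity of $R$ and the log canonical hypothesis, I check that every new maximal cone $\sigma'$ continues to satisfy the dichotomy of $(\dagger)$: either $\sigma'\subseteq W$, or $\on{Relint}(\sigma')\cap W\cap N=\emptyset$. The divisorial and Mori fiber cases essentially amount to suppressing a ray and checking that neighbouring cones merge compatibly with $(\dagger)$, while the flip case requires transporting $(\dagger)$ across the wall through the wall relation.

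\textbf{Preservation of F-dlt and main obstacle.} By Proposition~\ref{Fdlt_prop}, F-dlt consists of a support condition on $\Delta$ together with the requirement that every cone $\sigma$ with $\phi_{(K_{\cF_W}+\Delta)}\vert_\sigma=0$ is simplicial and $(\sigma,W)$ is non-dicritical. The support condition is clearly preserved, since any ray contracted by $f$ carries a coefficient strictly less than $1$. For the second condition I compare $\phi$ before and after via a common toric resolution, writing $p^*(K_{\cF_W}+\Delta)=q^*(K_{\cF_W}+\Delta')+E$ with $E$ effective by the negativity of $R$; combined with the preservation of non-dicriticality from the previous step and the preservation of simpliciality through toric wall-crossings, this yields F-dlt on the target. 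The main obstacle will be the flip: the opposite subdivision introduces new top-dimensional cones whose simpliciality and non-dicriticality must both be extracted from the wall relation simultaneously, so pinning down the correct combinatorial bookkeeping for these new cones, and in particular ruling out that the vanishing locus of $\phi$ becomes larger after the flip, is the crux of the argument.
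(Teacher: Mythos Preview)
Your strategy for non-dicriticality is essentially the paper's: the key combinatorial input, which you gesture at with ``$(K_{\cF_W}+\Delta)$-negativity of $R$ and the log canonical hypothesis'', is Lemma~\ref{lc_positive_ray} --- there exists $\ell\in J_+$ with $v_\ell\in W$. Once you have this ray, any $v\in\on{Relint}(\sigma_J)\cap W\cap N$ in a new cone (with $J_+\subseteq J$) can be pushed to $v-c_\ell v_\ell\in\on{Relint}(\sigma_{J\setminus\{\ell\}})\cap W$, and $\sigma_{J\setminus\{\ell\}}\preceq\sigma^\ell$ already lies in $\Sigma$, so $(\dagger)$ forces $\sigma_{J\setminus\{\ell\}}\subseteq W$ and hence $\sigma_J\subseteq W$. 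This single trick handles divisorial, fiber-type, and flipping contractions uniformly; your description is correct but vague on this point.

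Where you diverge from the paper is in the F-dlt part, and here you are working much harder than necessary. You try to track the vanishing locus of $\phi_{(K_{\cF_W}+\Delta)}$ through the flip and worry about it growing; but this is a non-issue. The target of a divisorial contraction or flip is again simplicial, so every cone is automatically simplicial. Combined with Propositions~\ref{Fdlt_prop}, \ref{Fdlt_ND_prop}, and \ref{lc}, one sees that on a $\bQ$-factorial toric variety with $\Delta\geq 0$,
\[
(\cF_W,\Delta)\text{ is F-dlt}\quad\Longleftrightarrow\quad(\cF_W,\Delta)\text{ is log canonical and }\cF_W\text{ is non-dicritical}.
\]
Indeed, log canonicity with $\Delta\geq 0$ forces $0\leq d_\rho\leq\iota(D_\rho)$, which is exactly condition (1) of Proposition~\ref{Fdlt_prop}; and if $(\Sigma,W)$ satisfies $(\dagger)$ then condition (2) holds for \emph{every} $\sigma\in\Sigma$, not just those with $\phi\vert_\sigma=0$. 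So F-dlt preservation is immediate from the non-dicritical case plus the standard fact that log canonicity is preserved under the MMP. Your ``main obstacle'' evaporates.
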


\begin{theorem}[{$=$ Theorem~\ref{conethm}, Cone Theorem}]
    Let $(\cF_W,D)$ be a log canonical toric foliated pair on a complete $\bQ$-factorial toric variety $X_\Sigma$ with $D\geq 0$. 
    Then 
    \[\overline{\on{NE}}(X)_{K_{\cF_W}+D<0} = \sum\bR_{\geq 0}[M_i]\] 
    where $M_i$ are torus invariant rational curves tangent to $\cF_W$. 
\end{theorem}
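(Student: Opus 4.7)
The plan is to leverage the standard polyhedral description of the Mori cone together with a combinatorial criterion for tangency, and then handle each $(K_{\cF_W}+\Delta)$-negative extremal ray individually. For a complete $\bQ$-factorial toric variety, $\overline{\on{NE}}(X_\Sigma)=\sum_{\tau\in\Sigma(n-1)}\bR_{\geq 0}[V(\tau)]$ is polyhedral and every extremal ray is spanned by some $[V(\tau)]$, so the task reduces to showing that each $(K_{\cF_W}+\Delta)$-negative extremal ray $R$ contains the class of a tangent torus invariant rational curve. The key combinatorial criterion I would prove is that $V(\tau)$ is tangent to $\cF_W$ if and only if $W\nsubseteq\on{span}_\bC(\tau)$; this is verified by identifying the tangent line to $V(\tau)$ at its generic point with $N_\bC/\on{span}_\bC(\tau)$ and observing that the image of $W$ there is nonzero exactly when $W\nsubseteq\on{span}_\bC(\tau)$.

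Next I would compute $(K_{\cF_W}+\Delta)\cdot V(\tau)$ combinatorially. By Proposition~\ref{lc} together with $\Delta\geq 0$ and log canonicity, we may write $K_{\cF_W}+\Delta=\sum_{\rho\subseteq W}(d_\rho-1)D_\rho$ with all coefficients in $[-1,0]$, and the wall relation for $\tau$ (with adjacent maximal cones $\sigma,\sigma'$ having ``extra'' rays $\rho_0,\rho_0'$) yields an explicit intersection formula. A central observation is that $\bQ$-factoriality forces $\rho_0,\rho_0'\nsubseteq\on{span}_\bC(\tau)$, so if either extra ray lies in $W$ then $V(\tau)$ is automatically tangent by the criterion above. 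This reduces the troublesome case to one where $\rho_0,\rho_0'\nsubseteq W$ and $W\subseteq\on{span}_\bC(\tau)$, with the negativity of the intersection contributed solely by rays of $\tau$ lying in $W$.

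For this residual case, in which the immediate generator $V(\tau_0)$ of an extremal ray $R$ is non-tangent, I would invoke the extremal contraction $\phi_R:X_\Sigma\to Y$ constructed in Propositions~\ref{div_contr}, \ref{MFS}, and \ref{flip}. Since those MMP contractions are compatible with $\cF_W$ and preserve non-dicriticality, they should force some wall $\tau'$ in the contracted locus to have $V(\tau')$ tangent to $\cF_W$ with $[V(\tau')]\in R$. The main obstacle is precisely this last step: producing a tangent generator for every non-tangent negative extremal ray, which requires a careful combinatorial analysis of how the toric MMP contractions reorganize the fan relative to $W$, using that $\rho_0,\rho_0'\nsubseteq W$ and $W\subseteq\on{span}_\bC(\tau_0)$ to pin down the tangent wall inside the ray spanned by $V(\tau_0)$.
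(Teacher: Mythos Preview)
Your tangency criterion ``$V(\tau)$ is tangent to $\cF_W$ iff $W\nsubseteq\bC\tau$'' is correct for walls (where it coincides with the paper's criterion $W+\bC\tau=N_\bC$, Proposition~\ref{tang_combinatoric}), but your justification is incomplete: identifying $\cT_{V(\tau)}$ at a generic point with $N_\bC/\bC\tau$ and checking that the image of $W$ is nonzero uses Definition~\ref{tangency}, which only applies when $V(\tau)\nsubseteq\on{Sing}(X)\cup\on{Sing}(\cF_W)$. For a generic $W$ the wall curve may well sit inside the foliation singular locus, and the paper develops Definition~\ref{defn_tang_general} and Proposition~\ref{tang_combinatoric} precisely to make sense of tangency in that situation. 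You need that machinery (or an equivalent argument) before your criterion is established.

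More importantly, the ``main obstacle'' you isolate admits a direct combinatorial resolution that avoids any appeal to the contraction Propositions~\ref{div_contr}, \ref{MFS}, \ref{flip}. In the notation of \S\ref{subsec_notations}, your residual case is exactly that $v_n,v_{n+1}\notin W$ while some $v_\ell$ with $\ell\in J_+\cap\{1,\dots,n-1\}$ lies in $W$ (this $\ell$ is produced by Lemma~\ref{lc_positive_ray}, since the negativity must come from a ray $\rho\subseteq W$ with $D_\rho\cdot V_\omega>0$, i.e.\ $a_\ell>0$). Instead of trying to reorganize the fan via $\phi_R$, simply \emph{switch walls inside the same extremal ray}: set $J=\{1,\dots,n+1\}\setminus\{\ell,n+1\}$ and consider $\sigma_J\preceq\sigma^{n+1}$. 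By \cite[Proposition~14-1-5(i)]{matsuki2002mmp} one has $[V_{\sigma_J}]\in R$, and since $\sigma^{n+1}$ is simplicial, $v_\ell\notin\bC\sigma_J$; hence $N_\bC=\bC v_\ell+\bC\sigma_J\subseteq W+\bC\sigma_J$ and $V_{\sigma_J}$ is tangent. This is exactly the paper's proof of Theorem~\ref{conethm}: one short paragraph, no MMP analysis needed. Your proposed route through the contractions would also require you to exhibit a tangent wall inside the contracted locus, which you leave open.
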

Let $\mc F$ be a foliation on a normal variety $X$ and $Y\subseteq X$ be a subvariety.
The challenging part here is how to define that $Y$  is tangent to $\mc F$ when $Y\subseteq\on{Sing}(\mc F)$. We show that our definition of tangency (Definition~\ref{defn_tang_general}) generalizes \cite[Definition 2.12]{CS} (Proposition~\ref{tang_CS}) and has a nice description when the foliation is toric (Proposition~\ref{tang_combinatoric}). 

\section*{Acknowledgments}
The authors would like to thank National Center for Theoretical Sciences (NCTS) for the wonderful research environment. 
They would also like to express their gratitude to Iacopo Brivio, Paolo Cascini, Jungkai Chen, Shin-Yao Jow, Ching-Jui Lai, and Calum Spicer for helpful discussions. 
The authors were partially supported by Lab. Bir. Geom. Grant number 111-2123-M-002-012. 

\section{Preliminaries}\label{sec:preliminaries}
We will exclusively work over the field of complex numbers $\bC$. 
For any sheaves $\cM$ and $\cN$ on a normal variety $X$, we denote $(\cM\otimes\cN)^{**}$ and $(\cM^{\otimes n})^{**}$ as $\cM\boxtimes\cN$ and $\cM^{[n]}$, respectively. 

\subsection{Basics on foliations}
In this subsection, most of the definitions follow from \cite{CS} and \cite{druel2021foliation}. 
Let $X$ be a normal variety. A \emph{foliation} is a coherent subsheaf $\cF$ of the tangent sheaf $\cT_X$ such that
\begin{enumerate}
\item $\cF$ is saturated, that is $\cT_X/\cF$ is torsion-free, and
\item $\cF$ is closed under the Lie bracket.
\end{enumerate}

Let $r=\on{rank}(\cF)$ be the \emph{rank} of the foliation and $c=\dim X-r$ be the \emph{corank} of the foliation. 
The \emph{canonical divisor} $K_\cF$ is a Weil divisor on $X$ such that $\cO_X(-K_\cF)\cong\det\cF$. 

We define the \emph{normal sheaf} of $\cF$ as $\cN_{\cF}:=(\cT_X/\cF)^{[1]}$. 
By taking the $r$-th wedge product of $\cN_{\cF}^*\to\Omega_X^{[1]}$, we obtain a twisted form $\omega\in H^0(X,\Omega_X^{r}\boxtimes\on{det}\cN_{\cF})$. 
Here $\omega$ satisfies the following  properties:
\begin{enumerate}
\item The zero locus of $\omega$ has codimension at least two. 
\item $\omega$ is locally decomposable, meaning that locally $\omega=\bigwedge_i\omega_i$ where $\omega_i$ are $1$-forms.
\item $\omega$ is integrable, that is, $\diff\omega_i\wedge\omega=0$ for all $i$.
\end{enumerate}

Conversely, let $D$ be a Weil divisor and $\omega\in H^0(X,\Omega_X^{r}\boxtimes\cO_X(D))$ be a twisted form  
whose zero locus has codimension at least two in $X$. If $\omega$ is locally decomposable and integrable, then the kernel of
$\cT_X\to \Omega_X^{r-1}\boxtimes\cO_X(D)$ given by the contraction via $\omega$ is a foliation.

Let $\pi\colon Y \dashrightarrow X$ be a dominant rational map between normal varieties and $\cF$ be a foliation on $X$. 
We denote by $\pi^{-1}\cF$ the \emph{pullback foliation} on $Y$ (see, for example, \cite[Section 3.2]{druel2021foliation}). 
If $f\colon X\dashrightarrow X'$ is birational, then $f_*\mathcal{F}$ represents the pullback foliation on $X'$ induced by $f^{-1}$.

Let $X^\circ$ be the open subset of $X$ such that $\cF\vert_{X^\circ}$ is a subbundle of $T_{X^\circ}$. 
A \emph{leaf} $L$ is a maximal connected and immersed holomorphic submanifold $L\subseteq X^\circ$ such that $T_L = \cF\vert_L$. 

A foliation $\cF$ is called \emph{algebraically integrable} if its leaves are algebraic. 
Equivalently, an algebraically integrable foliation $\cF$ on $X$ is induced from a dominant rational map $f\colon X\dashrightarrow Y$ for some normal variety $Y$ (see, for example, \cite[Sections 3.2 and 3.6]{druel2021foliation}).

\begin{definition}[Singular locus]
Let $\cF$ be a foliation of rank $r$ on a normal variety $X$. 
We obtain a morphism $\phi\colon \Omega_X^{[r]} \to \cO_X(K_\cF)$ by taking the double dual of the $r$-th wedge product of $\Omega_X^{[1]}\to \cF^*$, which is induced by the inclusion $\cF\subseteq \cT_X$. 
We define the \emph{singular locus} of $\cF$, denoted by $\on{Sing}(\cF)$, as the co-support of the image of $\phi'\colon \Omega_X^{[r]}\boxtimes\cO_X(-K_\cF) \to \cO_X$. 
\end{definition}

\begin{definition}[Invariance]\label{defn_invariance}\hspace{1em}
\begin{enumerate}
\item
    Let $\cF$ be a foliation of rank $r$ on a normal variety $X$. 
    We say that a subvariety $S\subseteq X$ is $\cF$-\emph{invariant} if for any subset $U\subseteq X$ and any section $\partial\in H^0(U,\cF)$, we have 
    $\partial(\cI_{S\cap U})\subseteq\cI_{S\cap U}$ 
    where $\cI_{S\cap U}$ is the ideal sheaf of $S\cap U$. 
\item
    For any prime divisor $D\subseteq Y \xrightarrow{\pi} X$ over $X$ where $\pi$ is a birational morphism, we define $\iota(D)=0$ if $D$ is $\pi^{-1}\cF$-invariant and $\iota(D)=1$ if $D$ is non-$\pi^{-1}\mc F$-invariant. One can show that $\iota(D)$ is independent of the choice of the birational morphism $\pi$ that extracts $D$.
\end{enumerate}
\end{definition}

\begin{proposition}\label{sing_X_inv_prop}
    Let $\mc F$ be a foliation on a normal variety $X$. 
    Then $\on{Sing}(X)$ is $\cF$-invariant.
\end{proposition}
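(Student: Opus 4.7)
The statement is local on $X$, so I would work on an affine open $U=\on{Spec}R$. By Definition~\ref{defn_invariance}, I must show that for every section $\partial \in \cF(U) \subseteq \cT_X(U) = \on{Der}_\bC(R)$, the ideal $\cI_Z$ of $Z:=\on{Sing}(X)\cap U$ is $\partial$-stable; since $\cF\subseteq \cT_X$, it suffices to prove this for an arbitrary $\partial\in\on{Der}_\bC(R)$. The strategy is to exhibit a natural $\partial$-stable ideal $J$ whose radical equals $\cI_Z$, and then invoke Seidenberg's theorem (valid in characteristic zero): a derivation preserving an ideal also preserves its radical.

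For $J$ I would take the Jacobian ideal, realized intrinsically as the Fitting ideal $J:=\on{Fit}_n(\Omega_{R/\bC})\subseteq R$ with $n=\dim X$. The Jacobian criterion of regularity (applicable since $R$ is a reduced finitely generated $\bC$-algebra of pure dimension $n$) gives $V(J)=Z$ set-theoretically, so $\sqrt{J}=\cI_Z$.

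The heart of the argument is showing $\partial(J)\subseteq J$, which I would carry out via a dual-numbers trick. Form $R[\epsilon]:=R\otimes_\bC\bC[\epsilon]/(\epsilon^2)$ and verify that
\[\phi\colon R[\epsilon]\to R[\epsilon],\qquad \phi(r+s\epsilon)=r+(s+\partial r)\epsilon,\]
is a $\bC[\epsilon]$-algebra automorphism (with inverse sending $s$ to $s-\partial r$). Differentials commute with base change, so $\Omega_{R[\epsilon]/\bC[\epsilon]}=\Omega_{R/\bC}\otimes_R R[\epsilon]$, and combined with the compatibility of Fitting ideals with base change this yields $\on{Fit}_n(\Omega_{R[\epsilon]/\bC[\epsilon]})=J\cdot R[\epsilon]=J\oplus J\epsilon$. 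As an isomorphism invariant of the module of relative differentials, this Fitting ideal is sent to itself by the induced automorphism of $\Omega_{R[\epsilon]/\bC[\epsilon]}$, so $\phi(J\oplus J\epsilon)=J\oplus J\epsilon$ as ideals of $R[\epsilon]$. For any $f\in J$, the identity $\phi(f)=f+(\partial f)\epsilon\in J\oplus J\epsilon$ then forces $\partial(f)\in J$.

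Applying Seidenberg's theorem finally gives $\partial(\cI_Z)=\partial(\sqrt{J})\subseteq\sqrt{J}=\cI_Z$, which is what we needed. The main obstacle lies in locating the right intrinsic invariant on which $\partial$ acts compatibly; the dual-numbers argument above bypasses any direct (and intricate) computation with presentation matrices of $\Omega_{R/\bC}$ and reduces the $\partial$-stability of $J$ to the general principle that Fitting ideals are preserved by $\bC[\epsilon]$-algebra automorphisms.
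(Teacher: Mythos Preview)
Your argument is correct. The paper's proof, however, is a one-line citation: it invokes Seidenberg's theorem (from \emph{Differential ideals in rings of finitely generated type}, 1967) that $\on{Sing}(X)$ is invariant under every derivation of $\cO_X$, and then observes that sections of $\cF$ are in particular derivations. What you have done is essentially reprove that theorem of Seidenberg: you identify the Fitting ideal $J=\on{Fit}_n(\Omega_{R/\bC})$, show $\partial(J)\subseteq J$ via the dual-numbers automorphism, and then apply a \emph{different} result of Seidenberg (that derivations in characteristic zero preserve radicals) to pass to $\sqrt{J}=\cI_Z$. Your approach buys self-containedness and makes the mechanism transparent---in particular the dual-numbers trick cleanly explains why the Jacobian/Fitting ideal is derivation-stable without any explicit matrix manipulation---whereas the paper's approach buys brevity and simply outsources the content to the literature. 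Both routes ultimately rest on Seidenberg's work; you just unpack one layer further.
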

\begin{proof}
    By \cite[Theorem 5]{seidenberg67differential}, $\on{Sing}(X)$ is invariant under any derivation. 
    In particular, it is $\cF$-invariant. 
\end{proof}

\begin{lemma}[{\cite[Lemma 3.5]{druel2021foliation}}]\label{sing_F_inv_lem}
    Let $\cF$ be a foliation on a smooth variety $X$. 
    Then $\on{Sing}(\cF)$ is $\cF$-invariant. 
\end{lemma}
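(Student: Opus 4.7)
The plan is to carry out a Cartan-formula computation at points where $\cF$ is locally free, and then extend across the codimension-two locus where this fails. Since $X$ is smooth and $\cF\subseteq\cT_X$ is saturated (hence reflexive), the open set $U\subseteq X$ on which $\cF$ is locally free of rank $r$ has complement of codimension at least two. On $U$ I would fix a local frame $\partial_1,\dots,\partial_r$ of $\cF$ and use the induced trivialization $\partial_1\wedge\cdots\wedge\partial_r$ of $\det\cF=\cO_X(-K_\cF)$; in this trivialization, $\phi'\colon\Omega_X^{[r]}\boxtimes\cO_X(-K_\cF)\to\cO_X$ sends $\omega$ to $\omega(\partial_1,\dots,\partial_r)$, so the ideal $\cI:=\on{image}(\phi')$ is locally generated by such functions.

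The core of the proof would be the Leibniz identity for the Lie derivative: for $\partial\in H^0(U,\cF)$ and $f=\omega(\partial_1,\dots,\partial_r)$,
\[
\partial f \;=\; (L_\partial\omega)(\partial_1,\dots,\partial_r) \;+\; \sum_{k=1}^{r}\omega\bigl(\partial_1,\dots,[\partial,\partial_k],\dots,\partial_r\bigr).
\]
The first summand equals $\omega'(\partial_1,\dots,\partial_r)\in\cI$ with $\omega'=L_\partial\omega$. For the second, involutivity of $\cF$ gives $[\partial,\partial_k]=\sum_l h_k^l\,\partial_l$; every substituted term with $l\ne k$ vanishes by the skew-symmetry of $\omega$, while the $l=k$ contributions collapse to $\bigl(\sum_k h_k^k\bigr)\,f\in\cI$. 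Hence $\partial f\in\cI$ on $U$, so $\cI|_U$ is $\cF$-invariant; Seidenberg's theorem in characteristic zero would then promote this to $\cF$-invariance of the radical $\sqrt{\cI}|_U=\cI_{\on{Sing}(\cF)}|_U$.

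The main obstacle is extending this invariance from $U$ to $X$, since at points where $\cF$ is not locally free one has no literal frame to insert into $\omega$. My plan is to work with a local presentation $\cO_X^N\twoheadrightarrow\cF$: near any point $p$ one obtains generators $\partial_1,\dots,\partial_N$ of $\cF$, and a local trivialization of the line bundle $\det\cF$ on smooth $X$ corresponds to a decomposable section of $\bigwedge^r\cT_X$ expressible as a combination of wedge products of these generators. The same Cartan-and-skew-symmetry argument then yields $\partial\bigl(\omega(\partial_{j_1},\dots,\partial_{j_r})\bigr)\in\cI$ for every index set $\{j_1,\dots,j_r\}\subseteq\{1,\dots,N\}$, giving the global $\cF$-invariance of $\cI$ and hence of $\cI_{\on{Sing}(\cF)}$.
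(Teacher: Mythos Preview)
The paper does not supply a proof of this lemma; it simply records it with a citation to \cite[Lemma~3.5]{druel2021foliation}. So there is nothing to compare against, and I will just assess your argument on its own.

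Your computation on the open set $U$ where $\cF$ is locally free is correct: the Lie--Cartan identity together with involutivity shows that $\cI|_U$ is preserved by every local section of $\cF$, and Seidenberg then gives the same for the radical.

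The gap is in your extension step. The assertion that a local trivializing section $s$ of $\det\cF\hookrightarrow\bigwedge^r\cT_X$ is ``expressible as a combination of wedge products of these generators'' is false in general. Take $X=\bA^3$ and $\cF=\ker(x\,dx+y\,dy+z\,dz)\subset\cT_X$, which is involutive and saturated but not locally free at the origin. The three Koszul generators $g_1=y\partial_x-x\partial_y$, $g_2=z\partial_x-x\partial_z$, $g_3=z\partial_y-y\partial_z$ satisfy $g_i\wedge g_j\in (x,y,z)\cdot s$ where $s=z\,\partial_x\wedge\partial_y-y\,\partial_x\wedge\partial_z+x\,\partial_y\wedge\partial_z$ generates $\det\cF$; so $s$ is \emph{not} in the $\cO_{X,0}$-span of the $g_i\wedge g_j$. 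Consequently your description of $\cI$ near such points is wrong, and you have only established $\partial(\cJ)\subseteq\cI$ for the possibly smaller ideal $\cJ$ generated by the $\omega(\partial_{j_1},\dots,\partial_{j_r})$.

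The fix is short. Your Cartan argument already proves the stronger inclusion $\partial(\cJ)\subseteq\cJ$: the term $(L_\partial\omega)(\partial_{j_1},\dots,\partial_{j_r})$ is visibly in $\cJ$, and each bracket term is an $\cO_X$-combination of values $\omega(\partial_{j_1},\dots,\partial_l,\dots,\partial_{j_r})\in\cJ$. Thus $\sqrt{\cJ}$ is $\cF$-invariant by Seidenberg. It remains to check $\sqrt{\cJ}=\cI_{\on{Sing}(\cF)}$. The zero locus $V(\cJ)$ is exactly the set of $p$ where the image of $\cF(p)\to\cT_X(p)$ has dimension $<r$. One shows this coincides with $V(\cI)$ as follows: if $s(p)\neq 0$ then, since $s$ is a pointwise limit of decomposable $r$-vectors, $s$ is decomposable and nonvanishing near $p$, hence cuts out a rank-$r$ subbundle $W\subseteq\cT_X$; on $U$ one has $W=\cF$, so the composite $\cF\to\cT_X/W$ vanishes on $U$ and therefore everywhere, giving $\cF\subseteq W$, and then $W/\cF$ is torsion inside the torsion-free $\cT_X/\cF$, forcing $\cF=W$ near $p$. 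Thus $s(p)\neq 0$ implies $\cF$ is a subbundle at $p$, and conversely a subbundle point clearly has $s(p)\neq 0$. Hence $V(\cJ)=V(\cI)$ and $\sqrt{\cJ}=\cI_{\on{Sing}(\cF)}$, completing the proof.
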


We recall the definition that a general subvariety $Z$ is tangent to a foliation $\cF$. 
\begin{definition}[Tangency]\label{tangency}
Let $X$ be a normal variety and $\cF$ be a foliation of any rank.
Given a (possibly analytic) subvariety $Z\subseteq X$ not contained in $\on{Sing}(X)\cup\on{Sing}(\cF)$, we say $Z$ is \emph{tangent} to $\cF$ if, over $X\setminus\big(\on{Sing}(X)\cup\on{Sing}(\cF)\cup\on{Sing}(Z)\big)$, the inclusion $\cT_Z\subseteq\cT_X$ factors through $\cF$. 
\end{definition}

\subsection{Basics on toric varieties}
In this paper, every toric variety is assumed to be normal. Our notations closely follow \cite{CLS11}. 

Let $N\simeq \bZ^n$ be a lattice of rank $n$ and $M:=\on{Hom}(N,\mb Z)$ be its dual lattice. We write $N\otimes\mb R$ and $N\otimes\mb C$.
A \emph{fan} $\Sigma$ in $N\otimes\bR$ is a finite collection of rational, strongly convex, polyhedral cones $\sigma\subseteq N\otimes\bR$, such that each face $\tau$ of a cone $\sigma\in\Sigma$ belongs to $\Sigma$ and the intersection of any two cones in $\Sigma$ is a face of each. 
For any $k\in\bZ_{\geq 0}$, denote the set of all $k$-dimensional cones in $\Sigma$ by $\Sigma(k)$, and denote the set of all $k$-dimensional faces of $\sigma\in\Sigma$ by $\sigma(k)$.
We write $\tau\preceq\sigma$ when $\tau$ is a face of $\sigma$. 

For each cone $\sigma\in\Sigma$, the affine toric variety associated with $\sigma$ is  $U_{\sigma,\,N}=\on{Spec}\bC[\sigma^\vee\cap M]=\on{Spec}\bC[\chi^m\mid m\in\sigma^\vee\cap M]$ where $\sigma^\vee$ is the dual cone of $\sigma$. 
A cone $\sigma\in\Sigma$ is said to be \emph{smooth with respect to $N$} if the primitive generators of the rays in $\sigma(1)$ form part of a $\bZ$-basis for $N$ (or equivalently, $U_{\sigma,\,N}$ is smooth).
If $\tau\preceq \sigma$ are two cones in $\Sigma$, there is an open immersion $U_{\tau,N}\hookrightarrow U_{\sigma, N}$.
The toric variety $X_{\Sigma,\,N}$ of the fan $\Sigma$ is constructed by gluing all $U_{\sigma,\,N}$ together via $\Sigma$. 
The dense torus $U_{\{0\},\,N}=\on{Spec}\bC[M]\subseteq X_{\Sigma,N}$ is denoted by $T_N$. 
The action of $T_N$ on itself can be extended to an action on $X_{\Sigma,\,N}$. 
We will omit $N$ in the subscript when $N$ is clear. 

For each $\sigma\in\Sigma$, $\on{Relint}(\sigma)$ denotes the relative interior of $\sigma$, $O_\sigma$ denotes the $T$-orbit of the distinguished point $x_\sigma$, and $V_\sigma$ denotes the closure of $O_\sigma$ in $X_\Sigma$ (see \cite[Chapter 3]{CLS11} for further details). 
If $\rho\in\Sigma(1)$ is a ray, then $V_\rho$ is a divisor and will also be denoted by $D_\rho$.

\section{Toric foliations}\label{sec:toric_fol} 
Let $X=X_\Sigma$ be the toric variety defined by a fan $\Sigma$ in $N\otimes\bR$. 
A subsheaf $\cF\subseteq \cT_X$ is called $T$-\emph{invariant} or \emph{torus invariant} if for any $t\in T$ we have $t^*\mc F=\mc F$ as subsheaves under the natural isomorphism $t^*\cT_X\simeq \cT_X$. 
A foliation $\cF\subseteq \cT_X$ is called a \emph{toric foliation} if $\cF$ is $T$-invariant. 

\begin{proposition}\label{1-1}
Let $\Sigma$ be a fan in $N\otimes\bR$ and $X_{\Sigma,\,N}$ be the toric variety defined by $\Sigma$. 
Then there is a one-to-one correspondence between the set of toric foliations on $X_{\Sigma,\,N}$ and the set of complex vector subspaces $W\subseteq N\otimes\bC$. 
\end{proposition}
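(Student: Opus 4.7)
The plan is to exploit the canonical trivialization of the tangent bundle on the dense torus. The Lie algebra of $T = T_N$ is canonically identified with $N_\bC$, inducing an isomorphism $\cT_T \simeq \cO_T \otimes_\bC N_\bC$ under which translation-invariant vector fields correspond to the constant sections $1 \otimes v$ for $v \in N_\bC$. This reduces the classification of $T$-invariant saturated subsheaves of $\cT_T$ to an elementary algebraic question about $M$-graded modules, after which the only work left is to extend across the boundary $X_\Sigma \setminus T$.

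For the direction $W \mapsto \cF_W$: given a complex subspace $W \subseteq N_\bC$, set $\cF_W^\circ := \cO_T \otimes W \subseteq \cT_T$ and let $\cF_W$ denote its saturation inside $\cT_{X_\Sigma}$. This saturated extension is unique because $X_\Sigma$ is normal and $\cF_W^\circ$ is locally free on $T$. I then verify the foliation axioms for $\cF_W$. Saturation holds by construction. For $T$-invariance, each translate $t^*\cF_W$ is another saturated subsheaf of $\cT_{X_\Sigma}$ whose restriction to $T$ equals $\cF_W|_T$, so uniqueness forces $t^*\cF_W = \cF_W$. For involutivity: the torus is abelian, so $[1 \otimes v, 1 \otimes w] = 0$ for all $v, w \in N_\bC$, and hence $[\cF_W^\circ, \cF_W^\circ] = 0$ on $T$. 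The image of $[\cF_W, \cF_W]$ in $\cT_{X_\Sigma}/\cF_W$ is then supported on $X_\Sigma \setminus T$; since $\cT_{X_\Sigma}/\cF_W$ is torsion-free by saturation and $T$ is dense, this image must vanish, giving $[\cF_W, \cF_W] \subseteq \cF_W$.

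For the direction $\cF \mapsto W$: restrict a toric foliation $\cF$ to $T$. Grading $\cO_T(T) = \bC[M]$ by $M$ with $\chi^m$ in degree $m$, and placing $N_\bC$ in degree $0$, the module $\cT_T(T) = N_\bC \otimes_\bC \bC[M]$ becomes $M$-graded, and $T$-invariant $\cO_T$-submodules correspond to $M$-graded $\bC[M]$-submodules. Any such submodule is stable under multiplication by each invertible $\chi^m$ and is therefore determined by its degree-zero component $W \subseteq N_\bC$, being equal to $W \otimes_\bC \bC[M]$. Saturation of $\cF|_T$ in $\cT_T$ is automatic since $\cT_T$ is locally free, so $\cF|_T = \cO_T \otimes W$. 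By uniqueness of the saturated extension, $\cF$ must then coincide with $\cF_W$ globally, showing the two assignments are mutually inverse.

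The main obstacle is propagating the Lie bracket computation from the dense torus to all of $X_\Sigma$ and making sure the uniqueness of the saturated extension genuinely rigidifies $\cF_W$; both issues are handled by the torsion-freeness of $\cT_{X_\Sigma}/\cF_W$ combined with the density of $T$. A secondary technical point is confirming that every $T$-invariant $\cO_T$-submodule of the trivial bundle $\cO_T \otimes N_\bC$ is automatically a graded submodule for the natural $M$-grading; this follows because the $T$-action on global sections decomposes them into $M$-isotypic components and any $T$-stable submodule respects the decomposition.
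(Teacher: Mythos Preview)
Your proof is correct and follows essentially the same approach as the paper: trivialize $\cT_T$ as $\cO_T \otimes N_\bC$, identify $T$-invariant subsheaves with subspaces $W$, use the vanishing of the Lie bracket on invariant vector fields, and invoke uniqueness of the saturated extension across the boundary. The paper is terser---it takes the fiber at $1 \in T$ rather than the degree-$0$ component of the $M$-grading, and cites \cite[Lemma~1.8]{hacon2021birational} for the uniqueness of extension rather than spelling out the torsion-freeness argument---but the underlying ideas are identical.
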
 
\begin{proof}
    If $\cF$ is a toric foliation, then $\cF\vert_T$ is a $T$-invariant vector sub-bundle of the tangent bundle $\cT_T$, which gives rise to a complex vector subspace $W:=(\cF\vert_T)_{1}\subseteq \cT_{T,\,1}=N\otimes\bC$. 
    By \cite[Lemma 1.8]{hacon2021birational}, any two foliations that agree on a Zariski open dense subset must be the same.
    Therefore, $\cF$ is uniquely determined by $W$. 
    
    Conversely, given any complex vector subspace $W\subseteq N\otimes\bC$, we can extend it via the $T$-action to a $T$-invariant subbundle $\cE\subseteq \cT_T$. 
    Since the Lie bracket on $\cT_{T}$ is trivial, $\cE$ becomes a foliation.  
    We can then uniquely extend $\cE$ to a foliation $\cF$ on $X_{\Sigma,\,N}$, and it is easy to see that $\mc F$ is $T$-invariant. 
\end{proof}

We will use $\cF_{W,\,\Sigma,\,N}$ to denote the toric foliation on $X_{\Sigma,\,N}$ corresponding to the complex vector subspace $W\subseteq N\otimes\bC$. 
If we have another fan $\Sigma'$ in the same $N\otimes\bR$, the pullback foliation on $Y = X_{\Sigma',\,N}$ is nothing but $\cF_{W,\,\Sigma',\,N}$. Hence we can unambiguously write $\mc F_W$ to denote the pullback foliation on any birational model obtained by modifying the defining fan.

\subsection{Local generators}\label{loc_generator}
In this subsection, we consider a fixed toric foliation $\mc F_W$ on a toric variety $X_{\Sigma,\,N}$ defined by a fan $\Sigma$ in $N\otimes\bR$. In \cite{pang2015harder}, a set of local generators for $\cF_W$ is provided.\footnote{Note that in \cite{pang2015harder}, it should be $N\otimes\bC$ instead of $N\otimes\bR$.} We include it here for the convenience of the readers. 
Recall that $W\subseteq N\otimes\bC$ is a complex vector subspace and $M$ denotes the dual lattice of $N$.  For any $v\in N\otimes\bC$, define
\[
\delta_v\colon \mb C[M]\rightarrow \mb C[M],\, \chi^m\mapsto \langle m,v \rangle\chi^m.
\]
Then $\delta_v\in\on{Der}_{\mb C}(\mb C[\sigma^\vee\cap M],\mb C[\sigma^\vee\cap M])$ for any strongly convex rational polyhedral cone $\sigma$, and we can regard $\delta_v$ as a $T_N$-invariant global section of $\cT_{U_\sigma}$.
If $\{m_1,\ldots,m_n\}$ is a basis for $M$ and $x_i=\chi^{m_i}$, then $\delta_v$ can be written as 
\[
    \delta_v=\langle m_1,v\rangle x_1\frac{\partial}{\partial x_1}+\cdots+\langle m_n,v\rangle x_n\frac{\partial}{\partial x_n}.
\]
We have the following lemma:

\begin{lemma}[{\cite[Lemma 2.1.10, 2.1.12]{pang2015harder}}]\label{local_gen}
    Let $\Sigma$ be a fan in $N\otimes\bR$ and $W$ be an $r$-dimensional complex vector subspace of $N\otimes\bC$. 
    For any ray $\rho\in\Sigma(1)$ with the primitive generator $v_\rho$, we make the following choices:
    \begin{itemize}
        \item  If $\rho\subseteq W$, choose $v_2,\ldots, v_n$ in $N\otimes\bC$ so that $\{v_\rho, v_2,\ldots,v_r\}$ is a basis for $W$.
        \item  If $\rho\nsubseteq W$, choose a basis $\{v_1, \ldots, v_r\}$ for $W$.
        \item  Choose a basis $\{m_2, \ldots, m_n\}$ for $\rho^\perp\cap M$.
        \item  Choose an element $m_\rho\in \rho^\vee\cap M$ such that $\langle m_\rho, v_\rho\rangle=1$.  Hence we have $\rho^\vee\cap M\cong\bZ_{\geq 0}m_\rho\oplus\bigoplus_{i=2}^n\bZ m_i$
        and $U_\rho=\on{Spec}\mb C[\chi^{m_\rho},\chi^{\pm m_2},\ldots,\chi^{\pm m_n}]$.
    \end{itemize}
    Then we have the following:
    \begin{enumerate}
        \item For any $v\in N\otimes\bC$, we have 
        \[\delta_{v}\vert_{U_\rho} = \langle m_\rho, v\rangle\chi^{m_\rho}\frac{\partial}{\partial \chi^{m_\rho}}+\sum_{i=2}^n\langle m_i, v\rangle\chi^{m_i}\frac{\partial}{\partial \chi^{m_i}}.\]
        \item On $U_\rho$, $\mc F_W$ is generated by
        \[\begin{array}{ll}
        \delta_{v_1},\ldots, \delta_{v_r} & \textnormal{if $\rho\nsubseteq W$} \\
        \frac{1}{\chi^{m_\rho}}\delta_{v_\rho}, \delta_{v_2},\ldots, \delta_{v_r} & \textnormal{if $\rho\subseteq W$.} 
        \end{array}\] 
    \end{enumerate}
\end{lemma}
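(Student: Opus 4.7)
The plan is to first establish part (1) by a direct computation and then exploit the resulting Euler-type formula together with a saturation argument to deduce part (2).

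For part (1), I would work directly from the definition $\delta_v(\chi^m)=\langle m,v\rangle\chi^m$. Since $\rho^\vee\cap M\cong\bZ_{\geq 0}m_\rho\oplus\bigoplus_{i=2}^n\bZ m_i$, any element of $\bC[\rho^\vee\cap M]$ is a polynomial in $\chi^{m_\rho}$ and $\chi^{\pm m_i}$, so $\delta_v\vert_{U_\rho}$ is determined by its values on these monomials. A one-line computation shows $\chi^{m_\rho}\frac{\partial}{\partial \chi^{m_\rho}}(\chi^{m_\rho})=\chi^{m_\rho}$ and $\chi^{m_i}\frac{\partial}{\partial \chi^{m_i}}(\chi^{m_j})=\delta_{ij}\chi^{m_j}$, and pairing these against $\langle m_\rho,v\rangle$ and $\langle m_i,v\rangle$ reproduces $\delta_v$ by linearity.

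For part (2), I would use the following strategy: since $U_\rho$ is smooth (a ray is automatically a smooth cone), and $\cF_W$ is saturated in $\cT_{U_\rho}$ and agrees with $W\otimes_\bC\cO_T$ on the open torus $T\subseteq U_\rho$, it suffices to exhibit $r$ local sections of $\cF_W$ that are pointwise linearly independent at every point of $U_\rho$. Such a family automatically spans a subbundle, which is therefore saturated in $\cT_{U_\rho}$, and a saturated subsheaf that agrees with $\cF_W$ on the complement of $D_\rho$ must coincide with $\cF_W$ (reflexive sheaves on a smooth variety are determined by their behavior at codimension-one points, and both have the same full-rank $R$-lattice at the DVR $\cO_{U_\rho, \eta_{D_\rho}}$). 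At a point with coordinates $(a, b_2,\dots, b_n)$ with $b_j\in\bC^\ast$, the formula from (1) gives $\delta_{v_i}\mapsto \langle m_\rho,v_i\rangle a\,\partial_{x_1}+\sum_{j\geq 2}\langle m_j,v_i\rangle b_j\,\partial_{x_j}$, so checking linear independence becomes a rank computation on a Vandermonde-type matrix.

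In the case $\rho\nsubseteq W$, the assumption forces $W\cap\rho_\bC=0$, so $v_1,\dots,v_r$ remain linearly independent modulo $\rho_\bC$; when $a\neq 0$ the full basis $\{m_\rho,m_2,\dots,m_n\}$ of $M$ witnesses the rank, and when $a=0$ the residual rank comes from the basis $\{m_2,\dots,m_n\}$ of $\rho^\perp\cap M$ paired against the nonzero projections of the $v_i$. In the case $\rho\subseteq W$, the normalizations $\langle m_\rho,v_\rho\rangle=1$ and $\langle m_j,v_\rho\rangle=0$ for $j\geq 2$ give $\delta_{v_\rho}=\chi^{m_\rho}\partial_{\chi^{m_\rho}}$, so $\frac{1}{\chi^{m_\rho}}\delta_{v_\rho}=\partial_{\chi^{m_\rho}}$ is a regular vector field; it is a section of $\cF_W$ on $U_\rho$ because it lies in $\cF_W$ after inverting $\chi^{m_\rho}$ and $\cF_W$ is saturated. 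After subtracting the $\partial_{\chi^{m_\rho}}$-components from each $\delta_{v_i}$, linear independence reduces to the images of $v_2,\dots,v_r$ being linearly independent in $N_\bC/\rho_\bC$, which holds since $\{v_\rho,v_2,\dots,v_r\}$ is a basis of $W$.

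The main obstacle is the $\rho\subseteq W$ case: one must justify the passage from the global section $\delta_{v_\rho}$ (which vanishes along $D_\rho$) to the saturated generator $\frac{1}{\chi^{m_\rho}}\delta_{v_\rho}$, and then verify that the resulting collection actually has constant rank $r$ (not just generic rank $r$) on all of $U_\rho$, including along $D_\rho$. Once that pointwise linear independence is in hand, the subbundle/saturation framework collapses the problem back to the known identification on the torus.
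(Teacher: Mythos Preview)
Your proposal is correct. The paper does not actually supply a proof of this lemma; it is quoted from \cite{pang2015harder} and stated without argument, so there is no ``paper's own proof'' to compare against. That said, your approach is sound and self-contained.

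A few remarks on the execution. Part (1) is exactly the right one-line computation. For part (2), your saturation strategy is the natural one, and the key step---that two saturated subsheaves of $\cT_{U_\rho}$ agreeing on the open torus must coincide on all of $U_\rho$---is precisely the fact the paper invokes elsewhere (Proposition~\ref{1-1}, via \cite[Lemma 1.8]{hacon2021birational}). Your verification that $\frac{1}{\chi^{m_\rho}}\delta_{v_\rho}=\partial_{\chi^{m_\rho}}$ is a genuine section of $\cF_W$ on $U_\rho$ (not just on the torus) is the crux of the $\rho\subseteq W$ case, and your appeal to saturation handles it cleanly: $\chi^{m_\rho}\cdot\partial_{\chi^{m_\rho}}=\delta_{v_\rho}\in\cF_W$, and torsion-freeness of $\cT_{U_\rho}/\cF_W$ forces $\partial_{\chi^{m_\rho}}\in\cF_W$.

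Two minor stylistic points. The phrase ``Vandermonde-type matrix'' is a slight misnomer---the matrix $(\langle m_j,v_i\rangle)$ is just the coordinate matrix of the $v_i$ in the dual basis, and its rank is governed by the linear independence of the $v_i$ (or of their images in $N_\bC/\bC v_\rho$), not by any Vandermonde structure. Also, your parenthetical about ``the same full-rank $R$-lattice at the DVR'' is correct but slightly roundabout; the global saturation argument you already sketched (agreement on a dense open forces equality of saturated subsheaves) is more direct and avoids localizing at $\eta_{D_\rho}$ separately.
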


\begin{corollary}\label{basics}
    Let $\cF_W$ be a toric foliation on a toric variety $X_\Sigma$ defined by a fan $\Sigma$ in $N\otimes\bR$ and a complex vector subspace $W\subseteq N\otimes\bC$. Then for any $\rho\in\Sigma(1)$, $D_\rho$ is $\mc F_W$-invariant if and only $\rho\nsubseteq W$.
\end{corollary}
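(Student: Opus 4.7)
The plan is to test invariance using the explicit local generators of $\cF_W$ on $U_\rho$ provided by Lemma~\ref{local_gen}. Since the generic point of $D_\rho$ lies in $U_\rho$ and $D_\rho \cap U_\rho$ is a dense open in $D_\rho$, checking whether sections of $\cF_W$ on $U_\rho$ preserve $\cI_{D_\rho \cap U_\rho}$ is enough to decide $\cF_W$-invariance of $D_\rho$. From the description $\rho^\vee \cap M = \bZ_{\geq 0} m_\rho \oplus \bigoplus_{i=2}^n \bZ m_i$ in Lemma~\ref{local_gen}, every lattice point of $\sigma^\vee$ outside $\rho^\perp$ is a positive multiple of $m_\rho$ plus an element of $\rho^\perp \cap M$, so $\cI_{D_\rho \cap U_\rho}$ is generated by the single monomial $\chi^{m_\rho}$. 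Thus $D_\rho$ is $\cF_W$-invariant if and only if each local generator $\partial$ of $\cF_W$ on $U_\rho$ satisfies $\partial(\chi^{m_\rho}) \in (\chi^{m_\rho})$.

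First I would handle the case $\rho \nsubseteq W$. By Lemma~\ref{local_gen}(2), $\cF_W|_{U_\rho}$ is generated by $\delta_{v_1}, \ldots, \delta_{v_r}$, and by Lemma~\ref{local_gen}(1) each such generator acts by $\delta_{v_i}(\chi^{m_\rho}) = \langle m_\rho, v_i \rangle \chi^{m_\rho}$, which clearly lies in $(\chi^{m_\rho})$. Hence $D_\rho$ is $\cF_W$-invariant.

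Next, for $\rho \subseteq W$, the generating set from Lemma~\ref{local_gen}(2) includes the distinguished element $\frac{1}{\chi^{m_\rho}} \delta_{v_\rho}$, which is a genuine section of $\cF_W$ on $U_\rho$. A direct computation using Lemma~\ref{local_gen}(1) together with $\langle m_\rho, v_\rho\rangle = 1$ gives
\[
\tfrac{1}{\chi^{m_\rho}} \delta_{v_\rho}(\chi^{m_\rho}) \;=\; \tfrac{1}{\chi^{m_\rho}} \cdot \langle m_\rho, v_\rho\rangle \chi^{m_\rho} \;=\; 1,
\]
which is a unit and hence not in $(\chi^{m_\rho})$. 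Therefore $D_\rho$ fails to be $\cF_W$-invariant.

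The only subtlety is the reduction from the scheme-theoretic definition of invariance on arbitrary opens $U$ to a check on $U_\rho$; this is handled by noting that invariance is a local condition at the generic point of the divisor, and the generic point of $D_\rho$ lies in $U_\rho$. No computations beyond the two one-line evaluations above are required, so there is no real obstacle.
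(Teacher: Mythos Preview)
Your proof is correct and follows essentially the same approach as the paper's own proof: both reduce to the open set $U_\rho$, identify $\cI_{D_\rho\cap U_\rho}=(\chi^{m_\rho})$, and then test the explicit generators from Lemma~\ref{local_gen}, observing in the case $\rho\subseteq W$ that $\frac{1}{\chi^{m_\rho}}\delta_{v_\rho}=\frac{\partial}{\partial\chi^{m_\rho}}$ sends $\chi^{m_\rho}$ to $1$. Your justification of the reduction via the generic point is slightly more explicit than the paper's, but the argument is the same.
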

\begin{proof}
    By considering $\mc F_W\vert_{U_\rho}$ and $D_\rho\cap U_\rho$, we may assume that $X_\Sigma=U_\rho$. 
    To check if $D_\rho$ is $\cF_W$-invariant, it suffices to check whether the ideal $\cI_{D_\rho}\subseteq\mb C[U_\rho]$ is invariant under the derivations in $\Gamma(U_\rho,\mc F_W)$ as $\cF_W$ is the sheaf of $\cO_{U_\rho}$-modules given by the $\bC[U_\rho]$-module $\Gamma(U_\rho,\cF_W)$. 
    We will use the notations in Lemma~\ref{local_gen}. 
    
    If $\rho\nsubseteq W$, then the ideal $(\chi^{m_\rho})\subseteq \mb C[\chi^{m_\rho},\chi^{\pm m_2},\ldots,\chi^{\pm m_n}]$ is invariant under the generators $\delta_{v_1},\ldots,\delta_{v_n}$ of $\mc F_W$. 
    Hence, $D_\rho$ is $\mc F_W$-invariant. 
    On the other hand, if $\rho\subseteq W$, then $\frac{\partial}{\partial\chi^{m_\rho}}$ is a global section of $\mc F_W$ and $\frac{\partial}{\partial\chi^{m_\rho}}\chi^{m_\rho}=1\notin(\chi^{m_\rho})$. 
    Therefore, $D_\rho$ is not $\cF_W$-invariant.
\end{proof}

\begin{remark}\label{local_gen_extend} 
    Let $N\simeq\mb Ze_1\oplus\cdots\oplus\mb Z e_n$ and let $\sigma=\on{Cone}(e_1,\ldots,e_n)$.
    Let $\{m_1,\ldots,m_n\}$ be the $\mb Z$-basis for $M$ which is dual to $\{e_1,\ldots,e_n\}$. 
    Then we have $\sigma^{\vee}=\on{Cone}( m_1,\ldots,m_n)$. 
    After re-indexing, we can assume that 
    $e_i\in W$ if and only if $1\leq i\leq \ell$. 
    Let $\{v_1,\ldots,v_r\}$ be a $\mb C$-basis for $W$ such that $v_i=e_i$ for $1\leq i\leq \ell$. 
    Then by Lemma \ref{local_gen}, $\mc F_W$ is generated by
    \[\frac{1}{\chi^{m_1}}\delta_{v_1},\ldots,\frac{1}{\chi^{m_\ell}}\delta_{v_\ell},\delta_{v_{\ell+1}},\ldots,\delta_{v_{r}}\in\on{Der}_{\mb C}(\mb C[\sigma^\vee\cap M],\mb C[\sigma^\vee\cap M])\] on $U=\bigcup_{\rho\in\sigma(1)}{U_\rho}$.
    Since $\mc F_W$ is reflexive, it is normal in the sense of \cite[Definition 1.1.11]{MR2815674}.
    We conclude that $\mc F_W$ is generated on $U_\sigma$ by the same set of generators. In particular, $\mc F_W$ is always locally free if $X_\Sigma$ is smooth, since the above argument shows that the fiber dimension of $\mc F_W$ is $r$ at any closed point.
\end{remark}

Let $X$ be a smooth variety and $D=\sum D_i$ be a simple normal crossing divisor. Then for each component $D_k$ of $D$, one can define the Poincar\'e residue map
$R_k\colon \Omega^1_X(\log D)\rightarrow \mc O_{D_k}$
which fits into the short exact sequence 
\[
0\rightarrow \Omega_X^1\rightarrow \Omega_X^1(\log D)\rightarrow \bigoplus \mc O_{D_i}\rightarrow 0. 
\]
See \cite[(8.1.6)]{CLS11} and \cite[p.254]{psmixedhodge} for details.
Taking the induced long exact sequence with respect to $\sheafhom_{\mc O_X}(-,\mc O_X)$ and noticing that $\sheafext^1_{\mc O_X}(\mc O_{D_k},\mc O_X)\simeq \mc O_{D_k}(D_k)$, we get the exact sequence
\[
0\rightarrow\cT_X(-\log D)\rightarrow\cT_X\rightarrow \bigoplus \mc O_{D_i}(D_i)\rightarrow 0,
\]
where $\cT_X(-\log D)$ is the sheaf of vector fields that vanish along $D$.
The morphism
\[
R_k^\vee\colon \cT_X \rightarrow \mc O_{D_k}(D_k)
\]
which appears in the connecting morphism can be thought of as the dual version of the  Poincar\'e residue map.

\begin{example}
Let $X=\mb A^n$ with coordinates $x_1,\ldots,x_n$ and let $D_1$ be the divisor defined by $x_1=0$.
We can write a vector field as $\delta=f_1\frac{\partial}{\partial x_1}+\cdots+f_n\frac{\partial}{\partial x_n}$ where each $f_k$ is regular. Then $R_1^\vee(\delta)$ is just $\bar f_1\otimes \frac{1}{x_1}\in \mb C[x_1,\ldots,x_n]/(x_1)\otimes_{\mb C[x_1,\ldots,x_n]}\mb C(x_1,\ldots,x_n)$, which is a section of $\mc O_{D_1}\otimes_{\mc O_X}\mc O_X(D_1)=\mc O_{D_1}(D_1)$. One can verify that  $R_1^\vee(\delta)$ is independent of the choice of coordinates.
\end{example}

Lemma~\ref{local_gen} can be reformulated as follows: Let $D=\sum_{\rho\in\Sigma(1)} D_\rho$ and $U=\bigcup_{\rho\in\Sigma(1)}U_\rho$.
Lemma~\ref{local_gen} shows that the map $v\otimes 1\mapsto \delta_v$ defines a map $W\otimes_{\mb C}\mc O_U\rightarrow \mc F_W\vert_{U}$. On $U_\rho$, we have the short exact sequence
\[
 0\rightarrow W\otimes_\mb C\mc O_{U_\rho}\rightarrow  \mc F_W\vert_{U_\rho}\overset{R^\vee}{\rightarrow} \mc O_{U_\rho}(D_\rho\cap U_\rho)\rightarrow 0
\]
if $\rho\subseteq W$, and $W\otimes_\mb C\mc O_{U_\rho}\simeq \mc F_W\vert_{U_\rho}$ if $\rho\nsubseteq W$. Hence there is a short exact sequence
\[
0 \rightarrow W\otimes_{\mb C}\mc O_U \rightarrow  \mc F_W\vert_{U}\overset{R^\vee}{\rightarrow} \bigoplus_{\rho\in\Sigma(1),\rho\subseteq W}\mc O_{D_\rho\cap U}(D_\rho\cap U)\rightarrow 0.   
\]
The induced long exact sequence with respect to $\sheafhom_{\mc O_U}(-,\mc O_U)$ gives
\begin{equation}\label{ses_on_U}
0\rightarrow \mc F_W^*\vert_U\rightarrow W^*\otimes_{\mb C}\mc O_U\rightarrow \bigoplus_{\rho\in\Sigma(1),\rho\subseteq W}\mc O_{D_\rho\cap U}\rightarrow 0.
\end{equation}
Tracing the maps along the process, we see that $W^*\otimes_{\mb C}\mc O_{U}\rightarrow \mc O_{D_\rho\cap U}$ is given by $f\otimes 1\mapsto \langle f,v_\rho\rangle 1$.

\begin{theorem}\label{first_ses}
    Let $\cF_W$ be a toric foliation on a toric variety $X_\Sigma$ defined by a fan $\Sigma$ in $N\otimes\bR$ and a complex vector subspace $W\subseteq N\otimes\bC$. 
    \begin{enumerate}
        \item There is an exact sequence
        \[
        0\rightarrow \mc F_W^*\rightarrow W^*\otimes_{\mb C}\mc O_{X_\Sigma}\rightarrow \bigoplus_{\rho\in\Sigma(1),\,\rho\subseteq W}\mc O_{D_\rho}.
        \]
        Here the map $W^*\otimes_{\mb C}\mc O_{X_\Sigma}\rightarrow \mc O_{D_\rho}$ is given by $f\otimes 1\mapsto\langle f,v_\rho\rangle 1$.
        \item If $X_\Sigma$ is $\mb Q$-factorial, then the map on the right is surjective, that is, 
        \[
        0\rightarrow \mc F_W^*\rightarrow W^*\otimes_{\mb C}\mc O_{X_\Sigma}\rightarrow \bigoplus_{\rho\in\Sigma(1),\,\rho\subseteq W}\mc O_{D_\rho}\rightarrow 0
        \]
        is exact.
    \end{enumerate}
\end{theorem}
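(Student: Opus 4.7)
The plan is to treat both claims by reducing to the already established exact sequence \eqref{ses_on_U} on the open subset $U = \bigcup_{\rho \in \Sigma(1)} U_\rho$, then propagating the information to $X_\Sigma$ using reflexivity and the fact that $\on{codim}(X_\Sigma \setminus U) \geq 2$ (since $X_\Sigma \setminus U$ consists of the orbits $O_\sigma$ with $\dim \sigma \geq 2$). First I would define $\psi \colon W^* \otimes_{\mb C} \mc O_{X_\Sigma} \to \bigoplus_{\rho \subseteq W} \mc O_{D_\rho}$ directly by $f \otimes 1 \mapsto (\langle f, v_\rho \rangle|_{D_\rho})_\rho$, and obtain the map $\mc F_W^* \to W^* \otimes \mc O_{X_\Sigma}$ as the pushforward, along $j \colon U \hookrightarrow X_\Sigma$, of the injective morphism $\mc F_W^*|_U \hookrightarrow W^* \otimes \mc O_U$ in \eqref{ses_on_U}; this pushforward recovers the original sheaves since $\mc F_W^*$ is reflexive, $W^* \otimes \mc O_{X_\Sigma}$ is locally free, and $X_\Sigma$ is normal.

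For part (1), I would set $\mc K := \ker(\psi)$ and note that \eqref{ses_on_U} already gives $\mc K|_U = \mc F_W^*|_U$. It therefore suffices to show that $\mc K$ and $\mc F_W^*$ are both reflexive subsheaves of $W^* \otimes \mc O_{X_\Sigma}$, after which they must coincide globally because they agree on $U$ and $X_\Sigma \setminus U$ has codimension at least two. The main obstacle is verifying that $\mc K$ is reflexive, which I would handle via the $S_2$-criterion: given an open $V \subseteq X_\Sigma$, a closed subset $Z \subseteq V$ of codimension at least two, and a section $f \in \Gamma(V, W^* \otimes \mc O_{X_\Sigma})$ whose restriction to $V \setminus Z$ lies in $\mc K$, I need $\psi(f) = 0$ on $V$. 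For each $\rho \in \Sigma(1)$ with $\rho \subseteq W$, the regular function $\langle f, v_\rho \rangle$ on $V$ vanishes on $(D_\rho \cap V) \setminus Z$; since $Z \cap D_\rho$ has codimension at least one in the irreducible divisor $D_\rho$, this subset is open and dense in $D_\rho \cap V$, so $\langle f, v_\rho \rangle$ vanishes on all of $D_\rho \cap V$, giving $\psi(f) = 0$.

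For part (2), surjectivity of $\psi$ reduces to a local check on each $U_\sigma$. I would first observe that $D_\rho \cap U_\sigma = \emptyset$ whenever $\rho \in \Sigma(1) \setminus \sigma(1)$, so only the rays $\rho \in \sigma(1)$ with $\rho \subseteq W$ contribute to the right-hand side on $U_\sigma$. Since $X_\Sigma$ is $\mb Q$-factorial, $\sigma$ is simplicial, hence $\{v_\rho : \rho \in \sigma(1)\}$ is $\mb R$-linearly independent in $N_{\mb R}$ and, in particular, $\{v_\rho : \rho \in \sigma(1),\, \rho \subseteq W\}$ is $\mb C$-linearly independent in $W$. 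Extending this set to a basis of $W$ and passing to the dual basis in $W^*$, I can choose $f_\rho \in W^*$ satisfying $\langle f_\rho, v_{\rho'} \rangle = \delta_{\rho, \rho'}$ for all such $\rho, \rho'$; the constant section $f_\rho \otimes 1$ then maps to the canonical generator $1$ in the $\rho$-th summand $\mc O_{D_\rho \cap U_\sigma}$ and to $0$ in the others, yielding surjectivity at every stalk on $U_\sigma$.
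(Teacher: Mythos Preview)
Your proposal is correct and follows essentially the same strategy as the paper's proof, which consists of the single sentence ``consider the push-forward of \eqref{ses_on_U} via the inclusion $j\colon U\to X_\Sigma$; the rest is similar to the proof of \cite[Theorem 8.1.4]{CLS11}.'' Your argument is precisely the content hidden behind that reference: the left-exactness of $j_*$ together with $j_*j^*=\on{id}$ on reflexive sheaves (your $S_2$-criterion) handles part (1), and the linear independence of ray generators on simplicial cones handles part (2), exactly as in the classical case for $\Omega^1_{X_\Sigma}$.
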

\begin{proof}
    Let $U=\bigcup_{\rho\in\Sigma(1)}U_\rho$ and consider the push-forward of (\ref{ses_on_U}) via the inclusion $j\colon U\rightarrow X_\Sigma$. The rest is similar to the proof of \cite[Theorem 8.1.4]{CLS11}. 
\end{proof}

\begin{proposition}\label{can_divisor_prop}
Let $\cF=\cF_W$ be a toric foliation on a toric variety $X_\Sigma$ defined by a fan $\Sigma$ in $N\otimes\bR$ and a complex vector subspace $W\subseteq N\otimes\bC$. 
Then $K_\cF+\sum_{\rho\in\Sigma(1),\,\rho\subseteq W}D_\rho\sim 0$. 
In particular, we can choose $K_\cF=-\sum_{\rho\in\Sigma(1),\,\rho\subseteq W}D_\rho$. 
\end{proposition}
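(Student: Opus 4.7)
The plan is to compute $\det \mc F_W^*$ on a codimension-two open subset where a convenient exact sequence is available, then extend the identification by reflexivity.

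First, recall that by the definition of the canonical divisor we have $\mc O_{X_\Sigma}(-K_\cF) \cong \det \mc F_W$, or equivalently $\mc O_{X_\Sigma}(K_\cF) \cong (\det \mc F_W^*)^{**}$ as rank-one reflexive sheaves. Hence it suffices to produce an isomorphism $\det \mc F_W^* \cong \mc O_{X_\Sigma}\bigl(-\sum_{\rho \subseteq W} D_\rho\bigr)$ in codimension one, and this is all I will aim for.

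Next, I would restrict to the open toric subset $U = \bigcup_{\rho \in \Sigma(1)} U_\rho \subseteq X_\Sigma$. Its complement is the union of the orbit closures $V_\sigma$ for cones $\sigma$ of dimension $\geq 2$, so it has codimension at least two. Each $U_\rho$ is smooth, so $U$ is smooth, and Remark~\ref{local_gen_extend} applied on each $U_\rho$ shows that $\mc F_W|_U$ is locally free of rank $r = \dim_\bC W$; consequently so is $\mc F_W^*|_U$.

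Then I would invoke the short exact sequence~(\ref{ses_on_U}) from the paragraph preceding Theorem~\ref{first_ses}:
\[
0 \to \mc F_W^*|_U \to W^* \otimes_\bC \mc O_U \to \bigoplus_{\rho \in \Sigma(1),\, \rho \subseteq W} \mc O_{D_\rho \cap U} \to 0.
\]
On the smooth variety $U$, determinants are multiplicative in short exact sequences (equivalently, first Chern classes are additive in $K^0(U)$). The middle term has trivial determinant, and the standard resolution $0 \to \mc O_U(-D_\rho|_U) \to \mc O_U \to \mc O_{D_\rho|_U} \to 0$ shows that $\det \mc O_{D_\rho|_U} \cong \mc O_U(D_\rho|_U)$. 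Collecting contributions,
\[
\det(\mc F_W^*|_U) \;\cong\; \mc O_U\!\left(-\sum_{\rho \in \Sigma(1),\, \rho \subseteq W} D_\rho|_U\right).
\]

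Finally, both $\mc O_{X_\Sigma}(K_\cF)$ and $\mc O_{X_\Sigma}\bigl(-\sum_{\rho \subseteq W} D_\rho\bigr)$ are rank-one reflexive sheaves on the normal variety $X_\Sigma$, and I have just shown they agree on the big open set $U$, whose complement has codimension at least two. Therefore the isomorphism extends globally, yielding $K_\cF \sim -\sum_{\rho \subseteq W} D_\rho$ and allowing the explicit choice of representative in the statement. The only point requiring care is that we work on $U$ rather than on all of $X_\Sigma$ (the full short exact sequence in Theorem~\ref{first_ses}(2) requires $\bQ$-factoriality, but (\ref{ses_on_U}) on $U$ is available without any such hypothesis), so reflexivity of the sheaves involved does all of the heavy lifting; no serious obstacle is expected.
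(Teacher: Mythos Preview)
Your proposal is correct and follows exactly the approach the paper takes: apply the short exact sequence~(\ref{ses_on_U}) on $U$, take determinants, and extend across the codimension-two complement by reflexivity. The paper's proof is a one-line reference to~(\ref{ses_on_U}) and the fact that $\on{codim}(X_\Sigma\setminus U)\geq 2$; you have simply spelled out the details.
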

\begin{proof}
    Apply Equation~(\ref{ses_on_U}) and note that $\on{codim}(X_\Sigma\setminus U)\geq 2$.
\end{proof}

\subsection{Singular locus of a toric foliation}
In this subsection, we present a combinatorial criterion to determine whether the orbit closure is contained in the singular locus of a toric foliation. 
To establish this criterion, we rely on the following lemma, which allows us to reduce the problem to the smooth case. 
\begin{lemma}\label{fol_sing_lem}
    Let $N$ be a lattice of rank $n$, $\sigma$ be a simplicial strongly convex rational polyhedral cone of dimension $n$, and $W$ be a complex vector subspace of $N\otimes\bC$.  
    There is a sublattice $N'$ of $N$ such that $\sigma$ is smooth with respect to $N'$ and $\on{Sing}(\cF_{W,\,N'}) = \pi^{-1}(\on{Sing}(\cF_{W,\,N}))$ where $\pi\colon U_{\sigma,\,N'}\to U_{\sigma,\,N}$ is a finite cover induced by the inclusion $N'\hookrightarrow N$ and $\cF_{W,\,N'}$ (resp. $\cF_{W,\,N}$) is the toric foliation on $U_{\sigma,\,N'}$ (resp. $U_{\sigma,\,N'}$) given by $W$. 
\end{lemma}
\begin{proof}
    Let $N'$ be the sublattice of $N$ generated by all $v_\rho$ for $\rho\in\sigma(1)$. 
    So $\sigma$ is a smooth cone with respect to $N'$. 
    Moreover, it introduces a finite covering $\pi\colon U_{\sigma,\,N'}\to U_{\sigma,\,N}$.  

    As $\on{Sing}(\cF_{W,\,N})$ is torus invariant, there are some cones $\tau_i\preceq\sigma$ such that $\on{Sing}(\cF_{W,\,N}) =\bigcup_{i=1}^\ell V_{\tau_i,\,N}$, where each $V_{\tau_i,\,N}$ is an irreducible component of $\on{Sing}(\cF_{W,\,N})$.
    Now we consider 
    \begin{align*}
        \mc C&=\{\tau\mid\tau_i\preceq\tau\preceq\sigma \textnormal{ for some } i\}, \\
        \Sigma'_0 &=\{\tau\mid\tau\preceq\sigma\}\setminus \mc C, \textnormal{ and}\\
        \Sigma'_i &=\Sigma'_0\cup\{\tau_i\}.
    \end{align*}
    Note that $\Sigma'_0$ and $\Sigma'_i$ are indeed fans. 
    Actually, we have $X_{\Sigma'_0,\,N}=U_\sigma\setminus\on{Sing}(\mc F_{W,\,N})$ and $X_{\Sigma'_i,\,N}=(U_\sigma\setminus\on{Sing}(\mc F_{W,\,N}))\cup O_{\tau_i}$. 
    One can check that $X_{\Sigma'_i,\,N}$ is an open subscheme of $U_{\sigma,\,N}$  for $0\leq i\leq\ell$, and thus the base change $\pi'\colon X_{\Sigma'_i,\,N'}\to X_{\Sigma'_i,\,N}$ is finite and surjective. 

    Since $X_{\Sigma'_0,\,N}$ has no foliation singularities, by \cite[Proposition 5.13]{druel2021foliation}, $X_{\Sigma'_0,\,N'}$ has no foliation singularities, from which we have $\on{Sing}(\cF_{W,\,N'})\subseteq\bigcup_{i=1}^\ell V_{\tau_i,\,N'}$. 
    If the containment is strict, then there is an $i\neq 0$ such that $X_{\Sigma'_i,\,N'}$ has no foliation singularities. 
    Thus $X_{\Sigma'_i,\,N}$ has no foliation singularities again by \cite[Proposition 5.13]{druel2021foliation}, which contradicts $V_{\tau_i,\,N}\subseteq\on{Sing}(\cF_{W,\,N})$.  
    We conclude that $\on{Sing}(\cF_{W,\,N'}) = \bigcup_{i=1}^\ell V_{\tau_i,\,N'}$ and therefore $\on{Sing}(\cF_{W,\,N'}) = \pi^{-1}(\on{Sing}(\cF_{W,\,N}))$. 
\end{proof}

\begin{proposition}\label{singularlocus}
    Let $\cF_W$ be a toric foliation on a $\bQ$-factorial toric variety $X_\Sigma$ defined by a fan $\Sigma$ in $N\otimes\bR$ and a complex vector subspace $W\subseteq N\otimes\bC$. 
    Then for any $\tau\in\Sigma$, $V_\tau\nsubseteq\on{Sing}(\cF_W)$ if and only if $W\cap\bC\tau=\on{Span}_\bC(S)$ for some $S\subseteq \tau(1)$ with the convention $\on{Span}_\bC(\emptyset)=0$.
\end{proposition}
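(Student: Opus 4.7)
The plan is to reduce to a local computation on a smooth toric chart via Lemma~\ref{fol_sing_lem}, then explicitly determine when the map $\cF_W \to \cT_X$ drops rank at the generic point of $V_\tau$, and finally translate the resulting linear-algebra condition into the stated combinatorial one.

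Since the problem is local at the generic point $\eta_\tau$ of $V_\tau$ and $\tau$ is simplicial (as $X_\Sigma$ is $\bQ$-factorial), I would extend the ray generators of $\tau$ to an $\bR$-basis of $N_\bR$, obtaining a full-dimensional simplicial cone $\sigma$ with $\tau \preceq \sigma$. Working on $U_\sigma \supseteq U_\tau$ is harmless even if $\sigma \notin \Sigma$, since both sides of the iff depend only on the germ at $\eta_\tau$. Applying Lemma~\ref{fol_sing_lem} to $\sigma$ produces a finite cover $\pi\colon U_{\sigma,N'} \to U_{\sigma,N}$ with $\sigma$ smooth with respect to $N'$ and $\pi^{-1}(\on{Sing}(\cF_{W,N})) = \on{Sing}(\cF_{W,N'})$. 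Since orbit-closures correspond under $\pi$ and the stated condition on $W \cap \bC\tau$ is purely $\bC$-linear and therefore lattice-independent, both sides are preserved, reducing me to the case where $\sigma$ is smooth.

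In the smooth case, I take the ray generators $e_1, \ldots, e_n$ of $\sigma$ as a $\bZ$-basis of $N$, re-indexed so that $e_i \in W$ iff $i \leq \ell$, and I choose a basis $\{e_1, \ldots, e_\ell, v_{\ell+1}, \ldots, v_r\}$ of $W$ with each $v_j$ adjusted (by subtracting its $e_1, \ldots, e_\ell$ part, which still lies in $W$) to lie in $\on{Span}_\bC(e_{\ell+1}, \ldots, e_n)$. By Remark~\ref{local_gen_extend}, $\cF_W|_{U_\sigma}$ is generated by $\partial/\partial x_1, \ldots, \partial/\partial x_\ell$ together with $\delta_{v_j} = \sum_{k>\ell} a_{jk} x_k \partial/\partial x_k$, where $a_{jk}$ are the coordinates of $v_j$ in $e_{\ell+1}, \ldots, e_n$. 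Writing $\tau = \on{Cone}(e_i : i \in I)$ and $I_2 = I \setminus \{1, \ldots, \ell\}$, the matrix of $\cF_W \to \cT_{U_\sigma}$ is block-diagonal with upper block $I_\ell$ and lower block $B = (a_{jk} x_k)$; at $\eta_\tau$ exactly the columns of $B$ indexed by $I_2$ vanish, so the fiber rank there equals $\ell + \on{rank}(A|_J)$, where $A = (a_{jk})$ and $J = \{\ell+1, \ldots, n\} \setminus I_2$.

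A rank-nullity argument applied to the projection of $W \cap \on{Span}_\bC(e_{\ell+1}, \ldots, e_n)$ (which is spanned by $v_{\ell+1}, \ldots, v_r$) onto $\on{Span}_\bC(e_k : k \in J)$ along $\on{Span}_\bC(e_k : k \in I_2)$ yields $\on{rank}(A|_J) = (r-\ell) - \dim\bigl(W \cap \on{Span}_\bC(e_k : k \in I_2)\bigr)$, so $\eta_\tau \notin \on{Sing}(\cF_W)$ iff $W \cap \on{Span}_\bC(e_k : k \in I_2) = 0$. To match this with the combinatorial condition, I would decompose $W \cap \bC\tau = \on{Span}_\bC(e_i : i \in I, i \leq \ell) \oplus (W \cap \on{Span}_\bC(e_k : k \in I_2))$ and observe that any $S \subseteq \tau(1)$ with $\on{Span}_\bC(S) \subseteq W$ must satisfy $S \subseteq \{e_i : i \in I, i \leq \ell\}$; the two conditions then coincide. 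The main subtlety I expect is the clean translation between the paper's definition of $\on{Sing}(\cF)$ (co-support of $\phi'$) and the rank-drop locus of $\cF_W \to \cT_X$ in the smooth locally-free setting, obtained by identifying $\det(\cF_W^*)$ with $\cO_X(K_\cF)$ after taking the $r$-th wedge of the dual of $\cF_W \hookrightarrow \cT_X$.
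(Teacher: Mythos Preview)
Your proof is correct and lands on the same terminal linear-algebra condition as the paper, namely $W \cap V = 0$ with $V = \on{Span}_\bC(v_\rho : \rho \in \tau(1),\ \rho \nsubseteq W)$ (your $\on{Span}_\bC(e_k : k\in I_2)$), but the route is genuinely different. After the common reduction to the smooth case via Lemma~\ref{fol_sing_lem}, the paper dualizes: it pairs the exact sequence for $\cF_W^*$ from Theorem~\ref{first_ses} with the Euler sequence for $\Omega^1_X$, localizes at the distinguished point $x_\tau$, and runs a snake-type argument identifying surjectivity of $\Omega^1_X \to \cF_W^*$ at $x_\tau$ with surjectivity of an explicit linear map $\lambda\colon W^\perp \to V$, which is then seen to be equivalent to $W \cap V = 0$. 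You stay on the tangent side instead: using the free generators of $\cF_W|_{U_\sigma}$ supplied by Remark~\ref{local_gen_extend}, you write down the matrix of $\cF_W \to \cT_{U_\sigma}$, evaluate at $\eta_\tau$, and read off the rank directly via rank--nullity. Your argument is more elementary --- no dual sequences, no diagram chase, just linear algebra on explicit generators --- while the paper's approach is more structural and makes essential use of the presentation of $\cF_W^*$ established in Theorem~\ref{first_ses}; the latter fits naturally if one is already invested in that machinery, whereas yours is self-contained once Remark~\ref{local_gen_extend} is available.
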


\begin{example}
    Let $N=\mb Ze_1\oplus\mb Ze_2\oplus\mb Ze_3$, $\tau=\on{Cone}(e_1,e_2)$, $W_1 = \bC e_3$, and $W_2 = \bC(e_1+ie_2)$. 
    We have $W_1\cap\bC\tau = \{0\}=\on{Span}_\bC(\emptyset)$, so $V_\tau\nsubseteq\on{Sing}(\mc F_{W_1})$ by Proposition~\ref{singularlocus}. 
    On the other hand, $W_2\cap\bC\tau=W_2$, which is not $\{0\}$, $\bC e_1$, $\bC e_2$, or $\bC e_1 + \bC e_2$. Hence $V_\tau\subseteq\on{Sing}(\mc F_{W_2})$.
\end{example}

\begin{proof}[{Proof of Proposition~\ref{singularlocus}}]
    By Lemma~\ref{fol_sing_lem}, we can assume that $X=X_\Sigma$ is smooth.  
    As this is a local problem, 
    we may assume that $N=\mb Ze_1\oplus\cdots\oplus\mb Ze_n$, $\sigma=\on{Cone}(e_1,\ldots,e_n)$, $X=U_\sigma$, and $\tau\preceq\sigma$. Note that $V_\tau\subseteq \on{Sing}(\mc F_W)$ if and only if $x_\tau\in\on{Sing}(\mc F_W)$ where $x_\tau$ is the distinguished point corresponding to $\tau$. Since both $\mc F_W$ and $\cT_X$ are locally free (Remark~\ref{local_gen_extend}), we have
    \begin{align*}
    & x_\tau\notin\on{Sing}(\mc F_W)\\
    \Leftrightarrow {} & \mc F_W\ \text{is a subbundle of}\ \cT_{X}\ \text{at}\ x_\tau\\
    \Leftrightarrow {} & \mc F_{W}\otimes_{\mc O_{X}}\mb C(x_\tau)\rightarrow \mc T_{X}\otimes_{\mc O_{X}}\mb C(x_\tau)\ \text{is injective}\\
    \Leftrightarrow {} & \Omega^1_{X}\otimes_{\mc O_{X}}\mb C(x_\tau)\rightarrow \mc F_{W}^*\otimes_{\mc O_{X}}\mb C(x_\tau)\ \text{is surjective}\\
    \Leftrightarrow {} & \Omega^1_X\otimes_{\mc O_X}\mc O_{X,x_\tau}\rightarrow \mc F_W^*\otimes_{\mc O_X}\mc O_{X,x_\tau}\ \text{is surjective}.
    \end{align*}
    Applying Theorem~\ref{first_ses} and \cite[Theorem 8.1.4]{CLS11} and localizing at $x_\tau$, we have the commutative diagram
    \[
    \begin{tikzcd}
        0\arrow[r]& \Omega^1_X\otimes_{\mc O_X}\mc O_{X,x_\tau}\arrow[r]\arrow[d,"\phi"] &M_\mb C\otimes_{\mb C}\mc O_{X,x_\tau}\arrow[r]\arrow[d,"\alpha"] &\bigoplus_{\rho\in\tau(1)}\mc O_{D_\rho}\otimes_{\mc O_X}\mc O_{X,x_\tau} \arrow[r]\arrow[d,"\beta"] &0 \\ 
        0\arrow[r]& \mc F_W^*\otimes_{\mc O_X}\mc O_{X,x_\tau}\arrow[r] & W^*\otimes_{\mb C}\mc O_{X,x_\tau}\arrow[r] &\bigoplus_{\rho\in\tau(1),\rho\subseteq W}\mc O_{D_\rho}\otimes_{\mc O_X}\mc O_{X,x_\tau} \arrow[r] &0
    \end{tikzcd}
    \]
    where the rows are exact. The induced exact sequence
    \[
    \ker\alpha\rightarrow \ker\beta\rightarrow \on{coker}\phi\rightarrow \on{coker}\alpha=0
    \]
    tells us that $\phi$ is surjective if and only if $\ker\alpha\rightarrow \ker\beta$ is surjective. 
    We have 
    \begin{align*}
        \ker\alpha &= W^{\perp}\otimes_{\bC}\cO_{X,x_\tau} \\
        \ker\beta &= \bigoplus_{\rho\in\tau(1),\rho\nsubseteq W}\cO_{D_\rho}\otimes_{\cO_X}\cO_{X,x_\tau}
    \end{align*}
    and $W^{\perp}\otimes_{\mb C}\mc O_{X,x_\tau}\rightarrow \mc O_{D_\rho}\otimes_{\mc O_X}\mc O_{X,x_\tau}$ is defined by
    \[
    f\otimes 1\mapsto \langle f,v_\rho \rangle 1\otimes 1.
    \]
    Hence, $\ker\alpha\rightarrow\ker\beta$ is surjective if and only if $\lambda\colon W^\perp \rightarrow V:=\sum_{\rho\in\tau(1),\rho\nsubseteq W} \mb C v_\rho$ defined by $\lambda(f)= \sum_{\rho\in\tau(1),\rho\nsubseteq W}\langle f,v_\rho\rangle v_\rho$ is surjective. 
    The map $M_\mb C\rightarrow V$ defined by $m\mapsto \sum_{\rho\in\tau(1),\rho\nsubseteq W}\langle m,v_\rho\rangle v_\rho$ is surjective with kernel $V^\perp$. Hence $\lambda$ is surjective if and only if $W^\perp+V^\perp=M_\mb C$, which is equivalent to $W\cap V=0$. One can check that this is exactly what we want.
\end{proof}

\subsection{Properties}
\begin{proposition}
    Let $\cF_1$ and $\cF_2$ be two foliations on a normal variety $X$. 
    The intersection $\cF_1\cap\cF_2$ also gives a foliation. 
\end{proposition}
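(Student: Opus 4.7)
The plan is to verify the three properties in the definition of a foliation for $\cF_1\cap\cF_2$ directly: coherence as a subsheaf of $\cT_X$, saturation, and involutivity under the Lie bracket. Since $\cF_1$ and $\cF_2$ are coherent subsheaves of the coherent sheaf $\cT_X$ on a Noetherian scheme, $\cF_1\cap\cF_2$ is the kernel of the natural map $\cT_X\to \cT_X/\cF_1\oplus \cT_X/\cF_2$ sending $v\mapsto (v\bmod\cF_1,\,v\bmod\cF_2)$, hence is itself coherent.

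For saturation, I would use the very same map: it induces an injection
\[
0\rightarrow \cT_X/(\cF_1\cap\cF_2)\rightarrow \cT_X/\cF_1\oplus \cT_X/\cF_2.
\]
Because each $\cT_X/\cF_i$ is torsion-free (by assumption that $\cF_i$ is saturated), so is their direct sum, and any subsheaf of a torsion-free sheaf is torsion-free. Hence $\cT_X/(\cF_1\cap\cF_2)$ is torsion-free, i.e., $\cF_1\cap\cF_2$ is saturated in $\cT_X$.

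For involutivity, I would argue locally: given sections $\partial,\partial'\in (\cF_1\cap\cF_2)(U)$ over an open set $U$, both lie in $\cF_i(U)$ for $i=1,2$, so by the integrability of each $\cF_i$ the bracket $[\partial,\partial']$ lies in $\cF_i(U)$ for $i=1,2$, hence in $(\cF_1\cap\cF_2)(U)$. I do not foresee any real obstacle here; the only minor subtlety is being careful that the intersection of coherent subsheaves inside a coherent sheaf on a Noetherian scheme is still coherent, which follows from the kernel description above.
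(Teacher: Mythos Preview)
Your proof is correct and follows essentially the same approach as the paper: both verify involutivity directly from the definition, and both prove saturation by observing that $\cT_X/(\cF_1\cap\cF_2)$ injects into the torsion-free object $\cT_X/\cF_1\oplus\cT_X/\cF_2$ (the paper phrases this stalkwise as a module lemma, you phrase it sheaf-theoretically, but the content is identical). Your explicit remark on coherence via the kernel description is a nice addition that the paper leaves implicit.
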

\begin{proof}
    It is clear that $\cF_1\cap\cF_2$ is closed under the Lie bracket as both $\cF_1$ and $\cF_2$ are closed under the Lie bracket. 
    It remains to show that $\cF_1\cap\cF_2$ is saturated, that is, $\cT_X/(\cF_1\cap\cF_2)$ is torsion-free. Hence, we need to show that  
    the stalk $(\cT_X/(\cF_1\cap\cF_2))_p$ is torsion-free for each $p\in X$. 
    
    It suffices to show that $M/(N_1\cap N_2)$ is torsion-free if each $N_i$
    is an $R$-submodule of $M$ such that $M/N_i$ is torsion-free.
    Suppose $\overline{m}\in M/(N_1\cap N_2)$ and $\overline{rm}=0\in M/(N_1\cap N_2)$ for some $r\in R\setminus\{0\}$. Then $\overline{rm}=0\in M/N_1$. Hence $m\in N_1$ as $M/N_1$ is torsion-free. Similarly, $m\in N_2$, and thus $m\in N_1\cap N_2$.
\end{proof}

\begin{remark}\label{rmk_properties}
\begin{enumerate}
    \item Let $X_\Sigma$ be the toric variety of a fan $\Sigma$ in $N\otimes\bR$. 
    If $\cF_1$ and $\cF_2$ are toric foliations on $X_\Sigma$ given by complex vector subspaces $W_1$ and $W_2$ in $N\otimes\bC$, respectively, then $\cF_1\cap\cF_2$ is the toric foliation given by $W_1\cap W_2$. 
    In other words, $\cF_{W_1}\cap\cF_{W_2} = \cF_{W_1\cap W_2}$. 
    
    \item Let $N=\bZ e_1\oplus\cdots\oplus\bZ e_n$ with dual basis $\{m_1,\ldots,m_n\}$. 
    We consider the toric foliation $\cF_H$ of corank one given by a hyperplane $H\subseteq N\otimes\bC = \cT_{T,\,1}$ on a toric variety $X_\Sigma$, where $H$ can be written as $\{v\in N\otimes\bC\mid \big\langle \sum_{i=1}^n a_im_i, v\big\rangle = 0\}$ for some $a_i\in\bC$. 
    Note that the torus $T:=U_{0}$ has coordinates $x_i:=\chi^{m_i}$ with $i\in \{1,\ldots,n\}$.  
    Then $\cF_H\vert_T$ is given by $\ker(\omega)$ where $\omega=\sum_{i=1}^n a_i\frac{\diff x_i}{x_i}$ is a $T$-invariant $1$-form. 

    Let $v=\sum_{i=1}^n b_ie_i \in N\otimes\bC$ where $b_i\in\bC$. 
    In Section \ref{loc_generator}, we have seen that there is a $T$-invariant derivation $\delta_v=\sum_{i=1}^n b_ix_i\frac{\partial}{\partial x_i}$.
    If $v\in H$, then we have $\sum_{i=1}^n a_ib_i=0$ and thus
    $\delta_v\in\ker(\omega)$. That is, $\delta_v$ gives a $T$-invariant global section of $\mc F_H$ (recall that $\delta_v$ is a $T$-invariant derivation on any $U_\sigma$). This can be seen from Lemma~\ref{local_gen} as well.

    \item Moreover, if $\cF_{H_1},\ldots,\cF_{H_\ell}$ are distinct toric foliations on a toric variety $X_\Sigma$ given by hyperplanes $H_1,\ldots,H_\ell$, respectively, then the intersection foliation $\bigcap_{i=1}^\ell\cF_{H_i} = \cF_W$ is a toric foliation given by $W=\bigcap_{i=1}^\ell H_i$. 
    Let $\cF_{H_i}$ be given by some $T$-invariant $1$-form $\omega_i$. 
    Then $\cF_W$ is also given by the kernel of the contraction via $\tilde{\omega}$, where $\omega=\bigwedge_{i=1}^\ell\omega_i$, $\omega=f\tilde{\omega}$ for some regular function $f$ on $X_\Sigma$, and the zero locus of $\tilde{\omega}$ has codimension at least two in $X_\Sigma$. 
\end{enumerate}
\end{remark}

\begin{proposition}\label{restriction}\hspace{1em}
\begin{enumerate}
\item Suppose $f\colon X_{\Sigma,\,N}\to X_{\Sigma',\,N'}$ is a surjective toric morphism defined by a surjective map $\tilde{f}\colon N\to N'$ between lattices. 
Let $W=\on{ker}(\tilde{f})\otimes \bC\subseteq N\otimes\bC$.
Then any fiber of $f$ that intersects $T_N$ is the closure of a leaf of $\mc F_W$. 
\item Let $\cF_W$ be a toric foliation on a toric variety $X_\Sigma$, where $\Sigma$ is a fan in $N\otimes\bR$ and $W\subseteq N\otimes\bC$ is a complex vector subspace. 
Then for any $\rho\in\Sigma(1)$, there is an induced toric foliation on $D_\rho$, given by $\overline{W+\bC\rho}\subseteq (N\otimes\bC)/\bC\rho=(N/\bZ v_\rho)\otimes\bC$ where $v_\rho$ is the primitive generator of $\rho$. 
\end{enumerate}
\end{proposition}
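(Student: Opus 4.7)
The plan is as follows. For part (1), I would first observe that the toric morphism $f$ restricts to a surjective homomorphism of algebraic tori $f|_{T_N}\colon T_N\to T_{N'}$, whose kernel $K\subseteq T_N$ is the subtorus corresponding to the sublattice $\ker(\tilde f)\subseteq N$. Since the Lie algebra of $K$ at the identity is exactly $\ker(\tilde f)\otimes\bC = W$, and $\cF_W|_{T_N}$ is by construction the $T_N$-invariant distribution with tangent space $W$ at $1$, the leaf of $\cF_W$ through any $t\in T_N$ coincides with the coset $tK$, which is also the fiber of $f|_{T_N}$ through $t$. Taking closures inside $X_{\Sigma,\,N}$, the closure $\overline{tK}$ is the leaf closure and sits inside $f^{-1}(f(t))$. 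The remaining point is to upgrade containment to equality: since $tK = f^{-1}(f(t))\cap T_N$ is Zariski dense in $f^{-1}(f(t))$ whenever the fiber meets $T_N$, its closure exhausts the fiber.

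For part (2), I plan a direct verification via the local generators from Lemma~\ref{local_gen}. The divisor $D_\rho$ is itself a toric variety with cocharacter lattice $N(\rho) := N/\bZ v_\rho$, so by Proposition~\ref{1-1} it suffices to identify the complex subspace of $N(\rho)_\bC$ that induces the restricted foliation on the open torus $O_\rho$ of $D_\rho$. Choosing $m_\rho\in\rho^\vee\cap M$ with $\langle m_\rho,v_\rho\rangle=1$ and a basis $\{m_2,\ldots,m_n\}$ of $\rho^\perp\cap M$, the subscheme $D_\rho\cap U_\rho$ is cut out by $\{\chi^{m_\rho}=0\}$ and equals $O_\rho$. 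Setting $\chi^{m_\rho}=0$ in the local generators of Lemma~\ref{local_gen}: when $\rho\not\subseteq W$, each $\delta_{v_j}$ restricts to $\delta_{\bar v_j}$, where $\bar v_j\in N(\rho)_\bC$ denotes the image of $v_j$; when $\rho\subseteq W$, the generator $\tfrac{1}{\chi^{m_\rho}}\delta_{v_\rho}=\partial/\partial\chi^{m_\rho}$ is normal to $D_\rho$ and does not descend, whereas the remaining $\delta_{v_j}$ ($j\geq 2$) restrict to $\delta_{\bar v_j}$. In either case, the subspace of $N(\rho)_\bC$ spanned by the resulting $\bar v_j$ is precisely the image of $W$, i.e.\ $(W+\bC\rho)/\bC\rho = \overline{W+\bC\rho}$, and applying Proposition~\ref{1-1} to $D_\rho$ finishes the identification.

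The main obstacle I anticipate is twofold. In (1), one must rule out that the scheme-theoretic fiber $f^{-1}(y)$ for $y\in T_{N'}$ carries extra components beyond $\overline{tK}$; the cleanest way is to use that $f|_{T_N}\colon T_N\to T_{N'}$ is a surjective group homomorphism of tori, so $f^{-1}(y)\cap T_N = tK$ is an irreducible coset that is Zariski dense in the underlying reduced fiber, forcing equality $f^{-1}(y) = \overline{tK}$ as closed subvarieties. In (2), the delicate case is $\rho\subseteq W$, where $D_\rho$ is non-$\cF_W$-invariant, so the induced foliation must be interpreted as the pullback of $\cF_W$ along the inclusion $D_\rho\hookrightarrow X_\Sigma$ rather than a naive restriction; the explicit computation above confirms that this drops the rank by one, matching $\dim\bigl((W+\bC\rho)/\bC\rho\bigr) = \dim W - 1$, and integrability of the resulting subsheaf is inherited automatically from that of $\cF_W$ together with the standard pullback machinery for foliations.
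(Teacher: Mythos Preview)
Your proposal is correct and aligns with the paper's approach, which is extremely terse: for (1) the paper merely reduces to the tori and asserts that fibers correspond to leaves, and for (2) it only recalls that $D_\rho$ is the toric variety of $\operatorname{Star}(\rho)$ in $N_{\mathbb{R}}/\mathbb{R}\rho$ without carrying out the identification of the subspace. Your explicit local-generator computation for (2) and the closure/density discussion for (1) supply the details the paper leaves implicit.
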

\begin{proof}
    For (1), we may replace $X_{\Sigma,\,N}$ and $X_{\Sigma',\,N'}$ by $T_N$ and $T_{N'}$, respectively. Then fibers of $f$ correspond to leaves of $\mc F_W$.
    For (2), note that $D_\rho$ is a toric variety given by $\on{Star}(\rho)$, which is a fan in $(N\otimes\bR)/\mb R\rho$. (See \cite[paragraph before Proposition 3.2.7]{CLS11} for more details on $\on{Star}(\rho)$)
\end{proof}

\begin{proposition}\label{alg_int_prop}
    Let $X_\Sigma$ be a toric variety of a fan $\Sigma$ in $N\otimes\bR$. 
    Then the following two statements are equivalent:
    \begin{enumerate}
        \item $W=N'\otimes\bC$ for some sublattice $N'\subseteq N$. 
        \item The toric foliation $\cF_W$ given by $W$ is algebraically integrable. 
    \end{enumerate}
\end{proposition}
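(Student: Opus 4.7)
The plan is to exploit the correspondence between connected closed algebraic subgroups of the torus $T_N$ (the subtori) and saturated sublattices of $N$, together with the translation-invariance of $\cF_W$ on $T_N$.

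For the direction $(1) \Rightarrow (2)$: given $W = N' \otimes_\bZ \bC$, I first replace $N'$ by its saturation $N''$ in $N$, which preserves $W$ and makes $N/N''$ a torsion-free lattice. The lattice surjection $N \twoheadrightarrow N/N''$ induces a surjective torus homomorphism $\pi\colon T_N \to T_{N/N''}$, giving a dominant rational map $X_\Sigma \dashrightarrow T_{N/N''}$. Its fibers on $T_N$ are translates of the subtorus $T_{N''}$, which have tangent space $N'' \otimes_\bZ \bC = W$ at every point; they therefore coincide with the leaves of $\cF_W\vert_{T_N}$. This exhibits $\cF_W$ as induced by a dominant rational map, so $\cF_W$ is algebraically integrable.

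For the direction $(2) \Rightarrow (1)$: by Remark~\ref{local_gen_extend}, $\cF_W\vert_{T_N}$ is a subbundle of $\cT_{T_N}$, so the leaf $L$ of $\cF_W$ through $1 \in T_N$ is well-defined. By hypothesis $L$ is algebraic, and I denote by $G$ its Zariski closure in $T_N$: this is an irreducible closed subvariety of $T_N$ of the same dimension as $L$, containing $L$ as a dense open subset, and satisfying $T_1 G = T_1 L = W$. The key step is to show $G$ is a subgroup. Since $\cF_W$ is $T$-invariant, for any $t \in L$ the translation $m_t$ preserves $\cF_W$ and hence maps leaves to leaves. Because $m_t(1) = t \in L$, maximality of $L$ forces $m_t(L) = L$, i.e.\ $tL = L$. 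Taking Zariski closures yields $tG = G$ for every $t \in L$, and then density of $L$ in $G$ gives $G \cdot G \subseteq G$. A closed irreducible subvariety of an algebraic group containing the identity and closed under multiplication is automatically a subgroup, so $G$ is a connected closed algebraic subgroup of $T_N$, hence a subtorus $T_{N''}$ for some saturated sublattice $N'' \subseteq N$. Comparing Lie algebras at $1$ then gives $W = N'' \otimes_\bZ \bC$.

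The main obstacle is direction $(2) \Rightarrow (1)$: the crux is showing that the Zariski closure of the leaf through the identity is a subgroup. The argument hinges on playing the $T$-invariance of $\cF_W$ against the maximality in the definition of a leaf, after which the standard classification of subtori of $T_N$ concludes the proof.
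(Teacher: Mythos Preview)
Your proof is correct and follows the same overall strategy as the paper: for $(1)\Rightarrow(2)$ you both use the quotient-lattice construction to exhibit $\cF_W$ as induced by a dominant rational map to a torus, and for $(2)\Rightarrow(1)$ you both study the leaf through $1\in T_N$.

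Where you differ is in the justification of $(2)\Rightarrow(1)$. The paper simply asserts that because the leaf $L$ through $1$ is algebraic, its tangent space $\cT_{L,1}$ is a rational subspace of $N_\bC$; this step carries all the content, and the paper does not explain why algebraicity of $L$ forces rationality of $W$ (an arbitrary algebraic subvariety of $T_N$ through $1$ need not have rational tangent direction). Your argument supplies exactly this missing ingredient: by combining $T$-invariance of $\cF_W$ with maximality of leaves you show that the Zariski closure $G$ of $L$ is closed under multiplication, hence is a subtorus $T_{N''}$, and then rationality of $W=\on{Lie}(G)=N''\otimes_\bZ\bC$ follows from the standard description of subtori. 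So your proof can be read as a fleshed-out version of the paper's, giving the structural reason (the subgroup property of the leaf closure) that underlies the paper's one-line claim. Your saturation step in $(1)\Rightarrow(2)$ is likewise a small refinement the paper elides when passing to the quotient lattice.
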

\begin{proof}
    Suppose $W=N'\otimes\bC$ for some sublattice $N'\subseteq N$. 
    We consider the quotient lattice $\overline{N}=N/N'$. 
    Then the image of $W$ is $\{\overline{0}\}$. 
    This introduces a toric morphism $T_N\to T_{\overline{N}}$. 
    As $T_N\subseteq X_\Sigma$, we have a dominant rational map $f\colon X_\Sigma\dashrightarrow T_{\overline{N}}$, which induces the foliation $\cF_W$. 
    Hence, $\cF_W$ is algebraically integrable. 

    Conversely, suppose $\cF_W$ is algebraically integrable.  
    Let $T$ be the torus in $X_\Sigma$. 
    Then the leaf $L$ of $\cF_W\vert_T$ through $1\in T$ is algebraic. 
    Thus, $\cT_{L,\,1}$ is a rational vector subspace of $\cT_{T,\,1}=N\otimes\bC$. 
    Consequently, $\cT_{L,\,1}=N'\otimes_\bZ\bC$ for some sublattice $N'\subseteq N$ and therefore, $W = \cF_{W,\,1} = \cT_{L,\,1} = N'\otimes\bC$. 
\end{proof}

\section{Toroidal foliations and extended complexes}\label{sec:toroidal_foliation}

Let us start by recalling some definitions.
\begin{definition}
Let X be a normal variety.
\begin{enumerate}
    \item A \emph{foliated pair} $(\cF, D)$ on $X$ consists of a foliation $\cF$ on $X$ and an $\bR$-divisor $D$ such that $K_\cF+D$ is $\mb R$-Cartier. 
    Note that $D$ is not required to be effective although we are mainly interested in the case when $D\geq 0$. 
    \item Let $(\cF,D)$ be a foliated pair on a normal variety $X$ and $\pi\colon \widetilde{X}\to X$ be a birational morphism. We can write
    $K_{\pi^{-1}\cF}+\pi_*^{-1}D=\pi^*(K_\cF+D)+\sum_{E} a(E,\cF,D)E$ 
    where the sum is over all $\pi$-exceptional prime divisors and $a(E,\cF,D)$ is called the \emph{discrepancy} of $(\cF,D)$ with respect to $E$. 
    \item Let $(\cF,D)$ be a foliated pair on a normal variety $X$. 
    We say that 
    \[(\cF,D) \textnormal{ is } \left\{\begin{array}{l}
        \textnormal{terminal} \\
        \textnormal{canonical} \\
        \textnormal{log terminal} \\
        \textnormal{log canonical} \\
        \ve\textnormal{-log canonical} 
    \end{array}\right.
    \textnormal{if } a(E,\cF,D) 
    \left\{\begin{array}{l}
        >0 \\
        \geq 0\\
        >-\iota(E)\\
        \geq -\iota(E)\\
        \geq -\iota(E)+\ve
    \end{array}\right.
    \]
    for any birational morphism $\pi \colon \widetilde X \to X$ and for any prime $\pi$-exceptional divisor $E$ on $\widetilde X$. 
    Here $\ve$ is a nonnegative real number and recall that $\iota(E)=0$ if $E$ is foliation invariant and $\iota(E)=1$ otherwise. 

    Let $P\in X$ be a point of $X$ which is not necessarily closed. 
    We say the foliated pair $(\cF,D)$ is terminal (resp. canonical, log terminal, log canonical, $\ve$-log canonical) \emph{at} $P$ if the requirement on discrepancy is satisfied for any exceptional divisor $E$ whose center in $X$ is the Zariski closure of $P$.
    
    Let $Z$ be an irreducible subvariety of $X$. 
    We say that the foliated pair $(\cF,D)$ is terminal (resp. canonical, log terminal, log canonical, $\ve$-log canonical) at the \emph{generic} point of $Z$ if it is such at $\eta_Z$, the generic point of $Z$. 
    And we say that the foliated pair $(\cF,D)$ is terminal (resp. canonical, log terminal, log canonical, $\ve$-log canonical) at the \emph{general} point of $Z$ if it is such at the general \emph{closed} point of $Z$. 

    We say $\cF$ is terminal (resp. canonical, log terminal, log canonical, $\ve$-log canonical) if the foliated pair $(\cF,0)$ is such. 

    \item Let $(\cF,D)$ be a foliated pair on a normal variety $X$. 
    We say $W\subseteq X$ is a \emph{log canonical center} (in short, \emph{lc center}) if $(\cF,D)$ is log canonical at the generic point of $W$ and there is some divisor $E$ of discrepancy $-\iota(E)$ on some model of $X$ dominating $W$. 
\end{enumerate}
\end{definition}

\subsection{Toric and toroidal foliated pairs}
In this subsection, we introduce toric and toroidal foliated pairs. 

\begin{notation}\label{notation:FanGeneratedByACone}
    For any rational, strongly convex, polyhedral cone $\sigma\subseteq N\otimes\bR$, we write $\Sigma_{\sigma,N}:=\{\tau\mid\tau\preceq\sigma\}$. We often omit $N$ from the notation when it is clear from context.
\end{notation}

\begin{definition}[Toric foliated pairs]\label{tor_fol_pair_defn}
Let $\Sigma$ be a fan in $N\otimes\bR$.
A \emph{toric foliated pair} $(\cF_W,D)$ on the toric variety $X_\Sigma$ consists of a
toric foliation $\cF_W$ on $X_\Sigma$ where $W\subseteq N\otimes\bC$ is a complex vector subspace and a torus invariant $\bR$-divisor $D$ on $X_\Sigma$ such that $K_{\cF_W}+D$ is $\bR$-Cartier. 
\end{definition}

\begin{definition}[{Toroidal embedding, \cite[Chapter 2, Definition 1]{KKMS1973toroidal}}]
    Let $X$ be a normal irreducible variety and $U\subseteq X$ be a Zariski open subset. 
    \begin{enumerate}
        \item The embedding $U\hookrightarrow X$ is called \emph{toroidal} if for any closed point $x\in X$, there exist an affine toric variety $U_{\sigma,N}$ and an isomorphism of complete local algebras $\psi_x\colon \widehat{\cO}_{X,\,x} \xrightarrow{\sim} \widehat{\cO}_{U_{\sigma,N},\,\gamma_\sigma}$ such that the ideal of $X\setminus U$ maps isomorphically to the ideal of $U_{\sigma,N}\setminus T_N$, where $T_N$ is the torus and $\gamma_\sigma$ is the distinguished point of $U_{\sigma,N}$. We call $U_{\sigma,N}$ a \emph{local model} of the toroidal embedding $U\hookrightarrow X$ at $x$.
        \item The divisor $\Xi=X\setminus U$ is called the \emph{associated reduced divisor} of the toroidal embedding $U\hookrightarrow X$. 
        \item A toroidal embedding $U\hookrightarrow X$ is called \emph{strict} if every irreducible component of $X\setminus U$ is normal. 
        \item A pair $(X,D)$ is called toroidal (resp. strict toroidal) if there is a toroidal embedding (resp. strict toroidal embedding) $U\hookrightarrow X$ such that $\on{Supp}(D)\subseteq X\setminus U$ and $K_X+D$ is $\bR$-Cartier.    
    \end{enumerate}
\end{definition}

\begin{definition}[Toroidal foliated pairs]\label{defn_toroidal}\hspace{1em}
    \begin{enumerate}
         \item A foliation $\cF$ on a normal variety $X$ is \emph{toroidal} if it is formally locally toric adapted to $\Xi$ for some toroidal embedding $(X\setminus\Xi)\hookrightarrow X$. 
        That is, $\Xi$ is a reduced divisor on $X$ such that for any closed point $x\in X$, there exists a couple $(U_{\sigma,N},W_N)$, called a \emph{local model} of $\mathcal F$ at $x$, which satisfies the following:
        \begin{itemize}
            \item $U_{\sigma,N}$ is the affine toric variety of the rational, strongly convex, polyhedral cone $\sigma\subseteq N\otimes\bR$, where $N$ is a lattice of finite rank.
            \item $W_N\subseteq N\otimes\bC$ is a complex vector subspace. 
            \item There exists an isomorphism of complete local algebras $\psi_x\colon \widehat{\cO}_{X,\,x}\cong\widehat{\cO}_{U_{\sigma,N},\,\gamma_\sigma}$, where $\gamma_\sigma$ is the distinguished point of $U_\sigma$, that maps the ideal of $\Xi$ to the ideal of $U_{\sigma,N}\setminus T_N$. Moreover,  the induced map between the modules of Kähler differentials over $\bC$ maps $\cF^*$ to $\cF_{W_N}^*$.
        \end{itemize} 
    The divisor $\Xi$ is called the \emph{associated reduced divisor} for the toroidal foliation $\cF$. 
    \item A toroidal foliation $\cF$ is called \emph{strict} if all irreducible components of the associated reduced divisor for $\cF$ is normal. 

    \item We say $(U_\sigma,W_\sigma)$ is a \emph{semi-local model} of $\mathcal F$ at $x$ if it satisfies the requirements for a local model with the following modifications:
    \begin{itemize}
        \item $\sigma\subseteq N\otimes\bR$ is a full-dimensional cone.
        \item $\psi_x$ maps the ideal of $\Xi$ to the ideal of some torus-invariant divisor in $U_{\sigma,N}$.
    \end{itemize} 

    \item Let $U\hookrightarrow X$ be a toroidal embedding and write $\Xi=X\setminus U=\bigcup_{i\in I}\Xi_i$. A \emph{stratum} is a component of the set $\bigcap_{i\in J}\Xi_i\setminus\bigcup_{i\notin J}\Xi_i$ for some $J\subseteq I$. Here we adopt the convention that the intersection over the empty index set is the whole space $X$ and the union over the empty index set is empty. Note that $X$ can be written as the disjoint union of strata. 
    We denote by $\on{Strata}(X)$ the set of all strata of the toroidal embedding $U\hookrightarrow X$. 

    \item
    We say a foliated pair $(\cF,D)$ on a normal variety $X$ is \emph{(strict) toroidal} if $\mc F$ is (strict) toroidal with associated reduced divisor $\Xi$, and $\on{Supp}(D)\subseteq \Xi$. 
    Let $(U_{\sigma,N},W_N)$ be a local model of $\mc F$ at $x\in X$. Then there exists a unique torus invariant divisor $D_{\sigma,N} = \sum_{\rho\in\sigma(1)} a_\rho D_\rho$ such that in a formal neighborhood of $x\in X$, $D$ is given by $D_{\sigma,N}$ via the isomorphism $\psi_x$. 
    The triple $(U_{\sigma,N},W_N,D_{\sigma,N})$ is called a \emph{local model} of $(\cF,D)$ at $x\in X$. 
    A semi-local model of $(\cF,D)$ at $x\in X$ is defined in the similar way. 
    \end{enumerate}
\end{definition}

\begin{remark}\label{toroidal_pair_rmk}
    Let $(\cF_W,D)$ be a toric foliated pair on a toric variety $X_\Sigma$, where $\Sigma$ is a fan in $N\otimes\bR$ and $W\subseteq N\otimes\bC$ is a complex vector subspace.
    Suppose $x\in O_\sigma$ for some $\sigma\in\Sigma$. Then there is a torus action $\phi_t$ which gives an automorphism on $X$, sending $x$ to $\gamma_{\sigma}$. Hence, $(U_{\sigma,N},W)$ is a local model of $\cF_W$ at $x$, and as a result, 
    $\cF_W$ is strict toroidal with the associated reduced divisor $\Xi=\sum_{\rho\in\Sigma(1)}D_\rho$. Any toric foliated pair $(\mc F_W,D)$ is strict toroidal.
\end{remark}

\subsection{Conical complexes}\label{sec:conical_cplx}
In this subsection, most definitions and properties of conical complexes are from \cite{KKMS1973toroidal} and \cite{AK00}. 

\begin{definition}
    A (rational polyhedral) conical complex $\Delta$ is a topological space $|\Delta|$ with a finite collection $S$ of closed subsets $\sigma$ of $|\Delta|$, each equipped with a lattice $N_{\sigma}$ of finite rank and a continuous function $\phi_\sigma\colon \sigma \to N_{\sigma}\otimes\bR$, satisfying the following:
    \begin{enumerate}
        \item $\sigma \xrightarrow{\phi_\sigma}\phi_\sigma(\sigma)$ is a homeomorphism. 
        \item $\phi_\sigma(\sigma)$ is a full-dimensional, rational, strongly convex, polyhedral cone in $N_\sigma\otimes\bR$. 
        \item For any closed subsets $\sigma_1\subseteq \sigma_2$ in $S$, there is a saturated $\bZ$-linear map $\iota_{\sigma_1\sigma_2}\colon N_{\sigma_1} \to N_{\sigma_2}$ such that the following diagram commutes:
        \[\begin{tikzcd}[column sep=5em]
            \sigma_1 \arrow[hook]{r}\arrow{d}[swap]{\phi_{\sigma_1}} & \sigma_2 \arrow{d}{\phi_{\sigma_2}} \\
            N_{\sigma_1}\otimes\bR \arrow{r}{\iota_{\sigma_1\sigma_2}\otimes\bR} & N_{\sigma_2}\otimes\bR.
        \end{tikzcd}\] 
        Here a $\mb Z$-linear map between lattices of finite rank is called saturated if the image is a saturated sublattice.
        \item For any closed subsets $\sigma_1\subseteq \sigma_2$ in $S$, $\phi_{\sigma_2}(\sigma_1)\preceq\phi_{\sigma_2}(\sigma_2)$.
        \item $|\Delta| = \bigsqcup_{\sigma\in S} \on{Relint}(\sigma)$. 
    \end{enumerate}
    Note that if $F$ is a face of $\phi_\sigma(\sigma)$ for some $\sigma\in S$, then $\sigma':=\phi_\sigma^{-1}(F)\in S$.
    If $\sigma_1, \sigma_2\in S$, then $\sigma_1\cap\sigma_2$ is in general a union of closed subsets in $S$, but not necessarily a single closed subset in $S$. 
\end{definition}

Hence, we may regard each $\sigma\in S$ as a closed subset of $|\Delta|$, or a full-dimensional, rational, strongly convex, polyhedral cone in a lattice $N_{\sigma}$. We will write $\sigma\in \Delta$ from now on, and omit the inclusion maps when no confusion seems to arise.

For any $i\in\bN$, we define $\Delta(i):=\{\sigma\in\Delta\mid\dim\sigma=i\}$. 

An \emph{extended complex} $(\Delta,W)$ is a conical complex $\Delta$ together with a map $W$ that assigns to each $\sigma\in\Delta$ a $\bR$-vector subspace $W(\sigma)$ in $N_\sigma\otimes\bR$ such that $W(\tau)=(W(\sigma)\cap N_\tau)\otimes\bR$ for all $\tau\preceq\sigma\in\Delta$. 

A \emph{map between extended complexes} $f\colon (\Delta,W)\to (\Delta',W')$ is a continuous map $\overline{f} \colon |\Delta|\to |\Delta'|$ such that for each $\sigma \in \Delta$, $\overline{f}(\sigma)$ is contained in some $\sigma' \in \Delta'$, and for any such $\sigma'$, $\overline{f}\vert_{\sigma}\colon \sigma\to\sigma'$ is induced by a $\bZ$-linear map $f_{\sigma,\sigma'}\colon N_\sigma\to N_{\sigma'}$. Moreover, we have $(f_{\sigma,\sigma'}\otimes\bR)(W(\sigma))=\bR(W'(\sigma')\cap (f_{\sigma,\sigma'}\otimes\bR)(\sigma))$. A map between the conical complexes $\Delta$ and $\Delta'$ is just a map $(\Delta,0)\to(\Delta',0)$ between extended complexes.

\begin{propdef}\label{propdef:star_subdiv_complex}
\begin{enumerate}
\item
    A conical complex $\Delta'$ is called a \emph{subdivision} of a conical complex $\Delta$ if $|\Delta'|=|\Delta|$ and the identity map on $|\Delta|$ defines a map $\Delta'\to \Delta$ of conical complexes. Moreover, for each $\sigma' \in \Delta'$, $f_{\sigma',\sigma}\colon N_{\sigma'}\to N_{\sigma}$ is saturated for any $\sigma \in \Delta$ that contains $\sigma'$. 
    As a result, every cone in $\Delta$ is a union of cones in $\Delta'$. 
     
    Let $(\Delta,W)$ be an extended complex and $\Delta'$ be a subdivision of $\Delta$. For each $\sigma'\in\Delta'$, define $W'(\sigma')=f_{\sigma',\sigma}^{-1}(W(\sigma) \cap f_{\sigma',\sigma}(N_{\sigma'}))\otimes\bR$, where  $\sigma\in\Delta$ is the minimal cone that contains $\sigma'$. Then one can show that  $(\Delta',W')$ is indeed an extended complex. 
\item
    Let $\Delta$ be a conical complex and $u\in \on{Relint}(\sigma)\cap N_\sigma$ be a primitive element for some $\sigma\in\Delta$. 
    Then we define $\Delta^*(u)$, the \emph{star subdivision} of $\Delta$ at $u$, as the set consisting of the following cones:
    \begin{itemize}
        \item $\tau$, where $u\notin\tau\in\Delta$. 
        \item $\tau':=\on{Cone}(\tau,u)\subseteq N_{\widetilde{\tau}}\otimes\bR$, where $\tau$, $\widetilde{\tau}\in \Delta$, $u\notin \tau$, and $\sigma$, $\tau\preceq \widetilde{\tau}$. We set $N_{\tau'}=N_{\widetilde{\tau}}\cap\bR\tau'$. 
    \end{itemize}
    One can show that $\tau'\subseteq N_{\tau'}\otimes\bR$ is independent of the choice of $\widetilde{\tau}$, and $\Delta^*(u)$ has a conical complex structure induced from $\Delta$.
\end{enumerate}
\end{propdef}
The proof is straightforward and we omit it here.

\subsection{Extended complexes associated with strict toroidal foliations}
Let $X\setminus\Xi\hookrightarrow X$ be a strict toroidal embedding. 
For any stratum $Y$, we define 
    \begin{enumerate}
        \item $\on{Star}(Y) = \bigcup_{Z\in\on{Strata}(X),\,Y\subseteq\overline{Z}}Z$. 
        \item $M^Y$ is the group of Cartier divisors in $\on{Star}(Y)$ supported in $\on{Star}(Y)\setminus U_X$. 
        \item $N^Y:=\on{Hom}(M^Y,\bZ)$. 
        \item $M^Y_+\subseteq M^Y$ is the subgroup containing the effective Cartier divisors. 
        \item $\sigma^Y\subseteq N^Y\otimes\bR$ is the dual of $M^Y_+\otimes\bR$. 
    \end{enumerate}
    \begin{corollary}[{\cite[Chapter2, Corollary 1 and page 71]{KKMS1973toroidal}}]\label{cor:stratum_vs_local_model} 
        If $U_{\sigma,N}$ is a local model of $X$ at $x\in Y$, then there are canonical isomorphisms
        \begin{enumerate}
            \item $M^Y\cong M/(\sigma^\perp\cap M)$, where $M$ is the dual lattice of $N$, 
            \item $N^Y \cong N \cap\on{Span}(\sigma)$, and 
            \item $\sigma^Y\cong\sigma$. 
        \end{enumerate}
        Moreover, $\{\sigma^Y\mid Y\mbox{ is a stratum of } X\}$ forms a conical complex $\Delta_X$, which will be called the \emph{associated conical complex} of the strict toroidal embedding $X\setminus\Xi\hookrightarrow X$. 
    \end{corollary}

\begin{notation}\label{notn:toroidal_orbit}
    For any $\sigma\in\Delta_X$, we denote by $O(\sigma)$ the stratum of $X$ such that $\sigma^{O(\sigma)}=\sigma$ and by $V(\sigma)$ the closure of $O(\sigma)$. 
\end{notation}

A morphism $f\colon X\to Y$ between toroidal embeddings is called \emph{toroidal} if for any $x\in X$, there exist a local model $U_{\sigma,N}$ of $X$ at $x$, a local model $U_{\sigma',N'}$ of $Y$ at $y=f(x)$, and a toric morphism $\phi\colon U_{\sigma,N}\to U_{\sigma',N'}$ such that $\phi(\gamma_\sigma)=\gamma_{\sigma'}$ and $\widehat f\colon  \widehat\cO_{Y,y}\to\widehat\cO_{X,x} $ comes from $\widehat\phi\colon \widehat\cO_{U_{\sigma',N'},\gamma_{\sigma'}}\to \widehat\cO_{U_{\sigma,N},\gamma_\sigma}$. One can verify that $f$ induces a map between conical complexes $\Delta_X\to\Delta_{Y}$. The following theorem says that we can do the opposite in some sense.

\begin{theorem}[{\cite[section 1.4]{AK00}, see also \cite[Theorem 6]{KKMS1973toroidal}}]\label{thm:toroidal_subdivision}
    Let $X\setminus\Xi\hookrightarrow X$ be a strict toroidal embedding with the associated complex $\Delta_X$, and let $\Delta'\to \Delta_X$ be a subdivision. Then there exists a proper birational toroidal morphism $f\colon X'\to X$ such that the associated conical complex of $X'$ is $\Delta'$, and the induced map between conical complexes is $\Delta'\to \Delta_X$. 
\end{theorem}

In fact, if $Y$ is a stratum in $X$ and $\sigma'\subseteq\sigma^Y$ is a cone in the subdivision, then we define 
\[U_{\sigma'} = \on{Spec}_{\on{Star}(Y)}\sum_{D\in\sigma'^\vee\cap M^Y}\cO_{\on{Star}(Y)}(-D).\]
Then $X'$ is formed by gluing together the open subsets $U_{\sigma'}$.

\begin{propdef}\label{propdef:exist_extended_complex}
    A strict toroidal foliation $\cF$ on $X$ uniquely determines an extended complex $(\Delta_X,W)$, which will be called the \emph{associated extended complex} of $\cF$. Note that in general $\on{rank}\cF$ can be larger than $\dim_{\bR} W(\sigma)$ for any $\sigma\in\Delta_X$.
\end{propdef}
\begin{proof}
For any $\tau\in \Delta_X$, choose $\sigma\in\Delta_X$ that contains $\tau$, and choose a local model $(U_{\sigma,N},W_N)$ of $\cF$ at some point $x\in O(\sigma)$. We may identify $\tau$ with a face $\tau\preceq\sigma$ together with the integral structures. Define $W(\tau)=\bR(W_N \cap N \cap \tau)\subseteq N_\tau\otimes\bR$.

We claim that $W(\tau)$ is independent of the choices of $\sigma$, $x\in O(\sigma)$, and the local model $(U_{\sigma,N},W_N)$. Let $u\in N \cap \tau$ be a primitive vector and write $\rho=\bR_{\geq 0}u$. Consider the toroidal morphism $f\colon X'\to X$ given by $\Delta_X^*(u)\to \Delta_X$. Then $V(\rho)\subseteq X'$ is invariant under $f^{-1}\cF$ if and only if $D_{\rho}\subseteq X_{\Sigma_{\sigma,N}^*(u)}$ is invariant under $\cF_{W_N}$, where $\Sigma_{\sigma,N}^*(u)$ is the star subdivision of $\Sigma_{\sigma,N}$ at $u$. By Corollary~\ref{basics}, it is equivalent to $\rho\nsubseteq W_N$, which is the same as $u\notin W(\tau)$.

One can check directly that $(\Delta_X,W)$ is an extended complex.
\end{proof}
In the proof above, we have also shown the following:

\begin{lemma}\label{lem:complex_div_inv}
    Let $\cF$ be a strict toroidal foliation on $X$ with associated extended complex $(\Delta_X,W)$. 
    For any ray $\rho\in\Delta_X(1)$, the divisor $V(\rho)$ is $\cF$-invariant if and only if $W(\rho)=\{0\}$. 
\end{lemma}
\begin{proof}
    It follows from the proof of Proposition-Definition~\ref{propdef:exist_extended_complex} and Corollary~\ref{basics}. 
\end{proof}

\begin{propdef}\label{lem:restr_blowup_complex}
    Let $\cF$ be a strict toroidal foliation on $X$ with associated extended complex $(\Delta_X,W)$. Then for every $\tau\in\Delta_X$, there is a strict toroidal foliation $\overline{\cF}$ on $V(\tau)$, whose associated extended complex $(\Delta_{V(\tau)},\overline{W}^\tau)$ is defined as follows:
    \begin{itemize}
        \item $\Delta_{V(\tau)}=\on{Star}(\tau):=\{\overline{\sigma}\mid\tau\preceq\sigma\in\Delta\}$, where $\overline{\sigma}$ is the image of $\sigma$ via $N_\sigma\otimes\bR\to N_\sigma\otimes\bR/(N_\tau \otimes\bR)=N_{\overline{\sigma}}\otimes\bR$ and $N_{\overline{\sigma}}=N_\sigma/N_\tau$.
        \item $\overline{W}^\tau(\sigma) = (W(\sigma)+\bR N_\tau)/\bR N_\tau$ for any $\tau\preceq\sigma\in\Delta$.
    \end{itemize}
    Indeed, $\overline{\cF}$ is obtained by successive restrictions: Choose $\rho_1,\ldots,\rho_k\in\Delta_X(1)$ such that $V(\tau)=V(\rho_1)\cap\cdots\cap V(\rho_k)$. Let $\cF_0=\cF$, and define $\cF_i$ to be the foliation on $V(\rho_1)\cap\cdots\cap V(\rho_i)$ such that $\cF_i=\cF_{i-1}\vert_{V(\rho_1)\cap\cdots\cap V(\rho_i)}$. Then $\overline{\cF}=\cF_k$. We will denote this foliation by $\cF\vert_{V(\tau)}$, and denote $(\cF\vert_{V(\tau)})\vert_{O(\tau)}$ by $\cF\vert_{O(\tau)}$.
\end{propdef}
\begin{proof}
    This is straightforward. 
\end{proof}

\subsection{\texorpdfstring{Non-dicritical singularities and condition $(\dagger)$}{Non-dicritical singularities and condition (dagger)}}
In this subsection, we first introduce the definition of non-dicritical singularities, which agrees with and generalizes the one in the literature. 
\begin{definition}\label{nddef}
    A foliation $\cF$ of corank $c$ on a normal variety $X$ is called \emph{dicritical} if there exists a prime divisor $E$ over $X$ which is not foliation invariant and the center $c_X(E)$ in $X$ has dimension at most $c-1$. 
    
    $\cF$ is \emph{non-dicritical} if it is not dicritical. Equivalently, $\cF$ is non-dicritical if for any prime divisor $E$ over $X$ with $\dim c_X(E)\leq c-1$, $E$ is foliation invariant. 
\end{definition}

\begin{remark}
    If $c=1$ and $\dim X = 3$, this is in agreement with \cite[Definition 2.10]{CS}. 
    If $c=2$ and $\dim X = 3$, this corresponds to the scenario described in the paragraph before \cite[Lemma 2.6]{cascini2025mmp}. 
    If $c=0$, then $\cF=\cT_X$ is non-dicritical since the assumption is vacuous. 
    If $c=\dim X$, then $\cF=0$ represents the foliation by points. 
    In this case, all subvarieties are invariant, and hence $\cF=0$ is also non-dicritical. 
\end{remark}

\begin{definition}\label{defn_dagger}
    Let $N$ be a lattice, $\sigma$ be a rational, strongly convex, polyhedral cone in $N\otimes\bR$, and $W\subseteq N\otimes\bK$ be a $\bK$-vector subspace where $\bK=\bQ$, $\bR$, or $\bC$.
    We say $(\sigma,W)$ is \emph{non-dicritical} if
    \begin{equation}\tag{$\dagger$}
    \on{Relint}(\sigma)\cap W\cap N\neq\emptyset\textrm{ if and only if } \sigma\subseteq W. 
    \end{equation}
\end{definition}

The following is the motivating example of the definition:

\begin{example}[Surface foliation]\label{eg_dicritical_1}
Let $N=\bZ e_1\oplus \bZ e_2$ and $\sigma=\on{Cone}(e_1, e_2)$. 
We consider the toric foliation $\cF$ on $U_\sigma\simeq\bA^2$ given by $\frac{\diff x}{x}-\lambda\frac{\diff y}{y}$ where $\lambda\in\bC^*$ and $\{x,y\}$ is a dual basis of $\{e_1,e_2\}$. 
Then $\cF=\cF_W$ where $W=\bC(\lambda e_1+e_2)$. 
If $\lambda\notin \bQ_{>0}$, then $\cF_W$ has a reduced singularity at the origin in the sense of \cite[Definition 1.1]{brunella2015birational}, which is known to be non-dicritical. 
If $\lambda\in \bQ_{>0}$, then $W$ is generated by a rational ray in $\on{Relint}(\sigma)$ and there is an exceptional divisor over the origin that is not foliation invariant by Corollary~\ref{basics}. 
As a result, $\cF_W$ is dicritical by \cite[Proposition 1.11]{brunella2015birational}. 
On the other hand, one can immediately check that $\lambda\notin \bQ_{>0}$ if and only if $(\dagger)$ is satisfied.
\end{example}

\begin{example}\label{eg_dicritical_2}
    Let $N=\bZ e_1\oplus \bZ e_2\oplus \bZ e_3$, $\sigma=\on{Cone}(e_1, e_2,e_3)$, and $W=\{(b_1,b_2,b_3)\mid b_1-b_2+ib_3=0\}$. 
Then $U_\sigma\cong\bA^3$ and $\cF_W$ is the toric foliation on $\bA^3$ given by $\omega := x_1x_2x_3(\frac{\diff x_1}{x_1}-\frac{\diff x_2}{x_2}+i\frac{\diff x_3}{x_3})$. 
We consider the chart for the blow-up $p$ at $(0,0,1)$ given by 
\[x_1=x'_1,\, x_2=x'_1x'_2,\, x_3-1=x'_1x'_3.\]
Then we have 
\begin{align*}
    p^*\omega &= x'^2_1x'_2(1+x'_1x'_3)\Big(\frac{-\diff x'_2}{x'_2}+i\frac{\diff(x'_1x'_3)}{1+x'_1x'_3}\Big) \\
    &= -x'^2_1(1+x'_1x'_3)\diff x'_2 + ix'^2_1x'_2\diff(x'_1x'_3) \\
    &= x'^2_1\Big(ix'_2x'_3\diff x'_1 - (1+x'_1x'_3)\diff x'_2 + ix'_1x'_2\diff x'_3\Big).
\end{align*}
So the pullback foliation is given by $\widetilde{\omega}:=\frac{1}{x'^2_1}p^*\omega$. 
Note that 
\[\partial = (1+x'_1x'_3)\frac{\partial}{\partial x'_1}+ix'_2x'_3\frac{\partial}{\partial x'_2}\in\ker(\widetilde{\omega}).\] 
As the exceptional divisor $E$ for $p$ is defined by $x'_1=0$ and $\partial x'_1 = 1+x'_1x'_3$, which is not in the ideal generated by $x'_1$,  we have $\partial \cI_E \nsubseteq \cI_E$ and hence $E$ is not foliation invariant with center $(0,0,1)\in U_\sigma$. 
Therefore, the foliation $\cF_W$ is dicritical. 

Note that for any star subdivision for a ray $\rho$ whose primitive generator is in the interior of $\sigma$, the exceptional divisor is foliation invariant as $\rho\nsubseteq W$ by Corollary~\ref{basics}. In other words, it is not enough to examine the non-dicriticality of $\cF_W$ by looking at the exceptional divisors that can be extracted by toric morphisms.
\end{example}

We can also consider the condition $(\dagger)$ for an extended complex.
\begin{definition}\label{def:dagger_extended_complex}
    We say an extended complex $(\Delta,W)$ satisfies the condition $(\dagger)$ if $(\sigma,W(\sigma))$ is non-dicritical for all $\sigma\in\Delta$.
\end{definition}

\begin{lemma}\label{lem_property_dagger}
\begin{enumerate}
    \item  Let $N$ be a lattice, $\sigma$ be a rational, strongly convex, polyhedral cone in $N\otimes\bR$, and $W\subseteq N\otimes\bK$ be a proper $\bK$-vector subspace where $\bK=\bR$ or $\bC$. Then $(\Sigma_\sigma,W)$ satisfies the condition $(\dagger)$ if and only if $\mb R(W\cap N)\cap \sigma$ is a face of $\sigma$.
    \item Let $(\Delta,W)$ be an extended complex satisfying the condition $(\dagger)$. 
    If $\Delta'$ is a subdivision of $\Delta$, then the induced extended complex $(\Delta',W')$ also satisfies the condition $(\dagger)$. 
\end{enumerate}
\end{lemma}
\begin{proof}
    For (1), suppose $(\Sigma_\sigma,W)$ satisfies the condition $(\dagger)$. 
    Then for each cone $\tau\preceq\sigma$, we have either $\on{Relint}(\tau)\cap W\cap N=\emptyset$ or $\on{Relint}(\tau)\cap W\cap N\neq\emptyset$, and by the condition $(\dagger)$, the latter implies that $\tau\subseteq W$. Thus, $\bR(W\cap N)\cap\sigma$ is a face of $\sigma$.

    Suppose $\bR(W\cap N)\cap\sigma$ is a face of $\sigma$. Then for any $\tau\preceq \sigma$ with $\on{Relint}(\tau)\cap W\cap N\neq\emptyset$, we have $\tau\preceq \bR(W\cap N)\cap\sigma$. Hence $\tau\subseteq \bR(W\cap N)\subseteq W$. That is, $(\Sigma_\sigma,W)$ satisfies the condition $(\dagger)$.

    For (2), suppose $\sigma'\in\Delta'$ with $\on{Relint}(\sigma')\cap W'(\sigma')\cap N_{\sigma'}\neq\emptyset$. 
    Let $\sigma\in\Delta$ be the minimal cone such that $\sigma'\subseteq\sigma$. 
    Then we have $\on{Relint}(\sigma')\subseteq\on{Relint}(\sigma)$. 
    Thus, $\on{Relint}(\sigma)\cap W(\sigma)\cap N_{\sigma}$ is not an empty set as it contains $\on{Relint}(\sigma')\cap W'(\sigma')\cap N_{\sigma'}$. 
    Since $(\Delta,W)$ satisfies the condition $(\dagger)$, we have $\sigma\subseteq W(\sigma)$ and thus, $\sigma'\subseteq\sigma\subseteq W(\sigma)$. 
    Hence, $\sigma'\subseteq W(\sigma)\cap (N_{\sigma'}\otimes\bR) = W'(\sigma')$. 
    Therefore, $(\Delta',W')$ also satisfies the condition $(\dagger)$. 
\end{proof}

Let $\cF$ be a strict toroidal foliation on $X$, and let $x\in O(\tau)$ for some $\tau\in\Delta_X$. By an argument similar to the proof of Lemma~\ref{cor:stratum_vs_local_model} (see the references cited there), if $(U_{\sigma,N},W_N)$ is a semi-local model of $\cF$ at $x$, we can identify $\tau$ with a face $\tau\preceq\sigma$ together with the integral structures. In this case, we still have $W(\tau)=\bR(W_N\cap N \cap \tau)$. Lemma~\ref{lem:exist_semi_local_model} states that a semi-local model exists and that it can be chosen to be as simple as possible. In particular, it allows us to show that if $Z$ is a subvariety of $X$, then near a general point $z\in Z$, the blow-up along $Z$ gives a toroidal foliation (see Lemma~\ref{lem:blowup_toroidal_model}).  To streamline the proof, we first point out the following fact:

\begin{lemma}\label{lem:local_isomorphism_of_affine_toric_variety}
    Let $N$ be a lattice of rank $n$, and let $U_{\sigma,N}$ be an affine toric variety. 
    If $\{m_1,\ldots,m_n\}$ is a basis of $M:=\on{Hom}(N,\bZ)$, then for any $c_1,\ldots,c_n\in\bC^*$, $x_i:=\chi^{m_i}\mapsto c_i x_i$ defines an automorphism on $T_N$ that extends to an automorphism on $U_{\sigma,N}$. 
\end{lemma}
\begin{proof}
    It is clear since it defines a torus action. More precisely, it defines an automorphism on $\bC[\sigma^\vee\cap M]$ by sending an element $x$ of degree $a_1m_1+\cdots+a_n m_n$ to $c_1^{a_1}\cdots c_n^{a_n} x$.
\end{proof}

\begin{lemma}\label{lem:minimal_intersection}
    Let $\mathbf 0\in Z\subseteq U\subseteq\bA^n=\{(x_1,\ldots,x_n)\}$ be a analytic subvariety, where $U$ is an analytic open subset, and $Z$ is smooth at $\mathbf 0$. Let $\cF$ be the foliation on $U$ defined by $\diff x_{r+1}\wedge\cdots\wedge\diff x_n$, and suppose
    $\dim_\bC \cT_{Z}(z)\cap\cF(z), z\in Z$, locally attains its minimum at $\mathbf 0$. Then there exist analytic functions $y_1,\ldots,y_n$ defined locally near $\mathbf 0$, such that $\bC[\![x_1,\ldots,x_n]\!]=\bC[\![y_1,\ldots,y_n]\!]$, $\cF$ is defined by $\diff y_{r+1}\wedge\cdots\wedge\diff y_n$, and $Z=\{y_i=0\mid i\in I\}$ in an analytic neighborhood of $\mathbf 0$ for some $I\subseteq\{1,\ldots,n\}$.
\end{lemma}
\begin{proof}
    By possibly shrinking $U$, we may assume that $Z$ is defined by $f_1(x_1,\ldots,x_n)=\cdots=f_s(x_1,\ldots,x_n)=0$, where $s=n-\dim Z$. Consider the matrix $A(z)$ defined by $A_{ij}(z)=\frac{\partial f_i}{\partial x_j}(z)$. Then we may assume that 
    \[
        A(\mathbf 0) =
        \begin{bmatrix}
        B & * & *\\
        \mathbf{0} & \mathbf{0} & C
        \end{bmatrix},
    \]
    where $B$ is a $t \times t$ diagonal matrix of rank $t$, the mid-bottom submatrix is a zero matrix of size $(s-t) \times (r-t)$, and $C$ is a $(s-t)\times(n-r)$ matrix in row-echelon form of full rank $s-t$ with pivots in the first $s-t$ columns. 
    By basic analysis, we may assume that for each $t+1\leq i\leq s$, $f_i$ is analytic near $\mathbf 0$ and does not involve $x_j$ for $1\leq j\leq t$ or $r+1\leq j \leq i-t+r-1$.

    Note that for $z\in Z$ near $\mathbf 0$, $\dim_\bC\cF(z)\cap T_Z(z)=n-r-\lambda$, where $\lambda$ is the rank of the submatrix of $A(z)$ consisting of the first $r$ columns. If we denote the mid-bottom submatrix of $A(z)$ by $R(z)$, we must have $R(z)=0$ for all $z\in Z$ near $\mathbf 0$ from the assumption. Hence, for each $t+1\leq i\leq s$, we can modify $f_i$ so that $f_i$ does not involve $x_j$ for $t+1\leq j\leq r$.  

    Now let 
    \[
        y_i=\left\{\begin{array}{ll}f_i &, 1\leq i \leq t,\\
            x_i &,t+1\leq i \leq r\\
            f_{i-r+t} &,r+1\leq i \leq s-t+r\\
            x_i &, s-t+r+1\leq i\leq n
            \end{array}\right..
    \]
    Then $\bC[\![x_1,\ldots,x_n]\!]=\bC[\![y_1,\ldots,y_n]\!]$, 
    $\diff y_{r+1}\wedge\cdots\wedge\diff y_n=u(x_1,\ldots,x_n)\diff x_{r+1}\wedge\cdots\wedge\diff x_n$ where $u(\mathbf 0)\neq 0$, and $Z$ is defined by $\{y_i=0\mid 1\leq i\leq t\text{ or }r+1\leq i \leq s-t+r\}$.  
\end{proof}

\begin{lemma}\label{lem:exist_semi_local_model}
    Let $\cF$ be a strict toroidal foliation on $X$ with extended complex $(\Delta_X,W)$. Suppose $x\in O(\tau)$ for some $\tau\in\Delta_X$.
    \begin{enumerate}
        \item There exists a semi-local model $(U_{\sigma,N},W_N)$ of $\cF$ at $x$ such that $\bR(W_{N}\cap N\cap \sigma)=W(\tau)+\bR\tilde\tau$, where $\tilde\tau\preceq\sigma$ is the face generated by all the rays $\rho\preceq\sigma$ contained in $W_N$ and $\rho\nsubseteq N_\tau\otimes\bR$. In particular, if $(\Sigma_{\tau,N_\tau}, W(\tau))$ satisfies the condition $(\dagger)$, then so does $(\Sigma_{\sigma,N},W_N)$.

        \item If $Z\subseteq V(\tau)$ is a subvariety that intersects $O(\tau)$, then we can choose a point $x\in Z\cap O(\tau)$ and a semi-local model $(U_{\sigma,N},W_N)$ of $\cF$ at $x$ such that, in addition to the statement in (1), $Z$ formally locally corresponds to $V_{\tau_Z}$ for some $\tau\preceq\tau_Z\preceq \sigma$.
    \end{enumerate}
\end{lemma}

\begin{proof}
    We first prove (1). By considering a local model $(U_{\tau,\widetilde N},W_{\widetilde N})$ of $\cF$ at $x$, we may replace $X$ by $U_{\tau,\widetilde N}$. Choosing a sublattice $N_0\subseteq \widetilde N$ such that $\widetilde N=N_\tau\oplus N_0$, we get $U_{\tau,\widetilde N}\simeq U_{\tau,N_\tau}\times T_{N_0}$, and we may assume that $x=(\gamma_\tau,\mathbf 1)\in U_{\tau,N_\tau}\times T_{N_0}$. Choose bases $\{m_1,\ldots,m_k\}$ and $\{m_1',\ldots,m_\ell'\}$ of $\on{Hom} (N_\tau,\bZ)$ and $\on{Hom} (N_0,\bZ)$, respectively, and write $x_i=\chi^{m_i},y_j=\chi^{m'_j}$. By expressing $W_{\widetilde N}$ as an intersection of hyperplanes of $\widetilde N\otimes\bC$, we may assume that $\cF_{W_{\widetilde N}}$ is defined by $\omega=\omega_1\wedge\cdots\wedge\omega_c$, where $c=\dim X-\on{rank}\cF$ and $\omega_i=\sum_{j=1}^ka_{ij}\frac{\diff x_j}{x_j}+\sum_{j=1}^{\ell}b_{ij}\frac{\diff y_{j}}{y_{j}}$. Let $A$ be the matrix defined by $A_{ij}=a_{ij}$ if $1\leq j\leq k$, and $A_{ij}=b_{i,j-k}$ if $k+1\leq j\leq k+\ell$. By performing row operations, possibly re-indexing the variables, and modifying $\omega_i$ accordingly, we may assume that  
    \begin{equation}\tag{*}
        A =
        \begin{bmatrix}
        B & \textbf{0} & *\\
        \mathbf{0} & C & *
        \end{bmatrix},
    \end{equation}
    where
    \begin{itemize}
        \item  $B$ is an $ m \times k $ matrix in row-echelon form of full rank $m$ with pivots in the first $m$ columns, and
        \item  $C$ is a $(c-m) \times (c-m)$ diagonal square matrix of full rank.
    \end{itemize} 
    We allow $m$ to be $0$ or $c$, and choose all the pivots to be $1$.
    Here we use the fact $\on{rank}(A)=c$.
    
    Note that centered at $x$, $\cF_{W_{\widetilde N}}$ is defined by the wedge product of $\omega_i=\sum_{j=1}^ka_{ij}\frac{\diff x_j}{x_j}+\sum_{j=1}^{\ell}b_{ij}\frac{\diff z_{j}}{z_{j}+1}$, where $z_j=y_j-1$. Assume (*), and consider the following analytic functions:
    \begin{align*}
    &x_i'=x_i(z_1+1)^{b_{i,1}}\cdots(z_\ell+1)^{b_{i,\ell}},\ 1\leq i\leq m,\\
    &x_{i}'=x_{i},\ m+1\leq i\leq k,\\
    &z'_i=(z_1+1)^{b_{m+i,1}}\cdots (z_\ell+1)^{b_{m+i,\ell}}-1,\ 1\leq i\leq c-m-1,\\
    &z'_{i}=z_{i},\ c-m\leq i\leq \ell.
    \end{align*}
    Then $(x_1',\ldots,x_k',z_1',\ldots,z_\ell')$ defines a biholomorphic map from an analytic neighborhood of $x$ to an analytic neighborhood of $(\gamma_{\tau},0)\in U_{\tau,N_\tau}\times\bA^\ell$ by Lemma~\ref{lem:local_isomorphism_of_affine_toric_variety}. Write $\bA^\ell=U_{\sigma_1,N_1}$ through the coordinates $z'_i$, and let $N=N_\tau\oplus N_1$, $\sigma=\tau\times\sigma_1$. Then $\cF_{W_{\widetilde N}}$ maps to the toric foliation $\cF_{W_N}$ on $U_{\sigma,N}$ defined by $\omega'=\omega'_1\wedge\cdots\wedge\omega_c'$, where $\omega'_i=\sum_{j=i}^ka_{ij}\frac{\diff x_j'}{x_j'}$ for $1\leq i\leq m$, $\omega'_i=\diff z'_{i-m}$ for $m+1\leq i\leq c$. Denote by $\widetilde\tau$ the face of $\sigma$ generated by all the rays $\rho\preceq\{0\}\times\sigma_1$ contained in $W_N$.
    Then one can check that $W_N\cap N=W_N\cap N_\tau+W_N\cap N_1$, and $W_N\cap N_1=N\cap\bR\widetilde\tau$. Since $W(\tau) = \bR(W_N\cap N_\tau \cap \tau)$, the assertion $\bR(W_{N}\cap N\cap\sigma)=\bR(W_{N}\cap N\cap(\tau\times\widetilde\sigma))=W(\tau)+\bR\widetilde\tau$ follows.

    Suppose $(\Sigma_{\tau,N_\tau},W(\tau))$ satisfies the condition $(\dagger)$. 
    Then by Lemma~\ref{lem_property_dagger}(1), we have $\bR(W(\tau)\cap N_\tau) \cap \tau$ is a face of $\tau$. 
    Note that 
    \[\bR(W_N\cap N)\cap\sigma=\bR(W_N\cap N\cap\sigma)\cap\sigma=(W(\tau)+\bR\widetilde\tau)\cap(\tau\times\widetilde\sigma)=(W(\tau)\cap\tau)\times \widetilde\tau.\]
    Since $W(\tau)$ is generated by lattice points, we have $W(\tau)\cap \tau=\tau_0$ for some $\tau_0\preceq\tau$. Hence $\bR(W_N\cap N)\cap\sigma=\tau_0\times\widetilde\tau\preceq\sigma$,
    and therefore $(\Sigma_{\sigma,N},W_N)$ satisfies the condition $(\dagger)$ again by Lemma~\ref{lem_property_dagger}(1).

    Now we prove (2). 
    Write $\overline\cF=\cF\vert_{V(\tau)}$, and choose $x\in O(\tau)$ to be a point such that $\dim_\bC(\overline\cF(z)\cap \cT_Z(z))$ locally attains its minimum at $x$. Assume (*), and choose a semi-local $(U_{\sigma,N},W_N)$ of $\cF$ at $x$ as in the proof of Lemma~\ref{lem:exist_semi_local_model}(1). Then $(U_{\sigma_1,N_1},\bC\widetilde\tau)$ is a semi-local model of $\overline\cF$ at $x$. Choose $\widetilde Z \subseteq U_{\sigma_1,N_1}$ defined analytically locally near $\gamma_{\sigma_1}$, such that $\widetilde Z$ maps to $Z$ formally locally near $\gamma_{\sigma_1}$. By applying Lemma~\ref{lem:minimal_intersection}, we get the analytic functions $z''_1,\ldots,z''_\ell$ on $U_{\sigma_1,N_1}$ such that $\cF_{\bC \widetilde\tau}$ is defined by $\diff z''_1\wedge\cdots\wedge\diff z''_{c-m}$, and $\widetilde Z=\{z''_i=0\mid i\in I\}$ near $\gamma_{\sigma_1}$ for some $I\subseteq\{1,\ldots,c-m\}$. This gives a toric foliation $\cF_{\bC\widehat\tau}$ on $U_{\sigma_2,N_2}$ for some $\widehat\tau\preceq\sigma_2$. Replacing $N_1$ by $N_2$ and  $(U_{\sigma_1,N_1},\bC\widetilde\tau)$ by $(U_{\sigma_2,N_2},\bC\widehat\tau)$, we get the semi-local model as required.
\end{proof}

\begin{lemma}\label{lem:blowup_toroidal_model}
    Let $\cF$ be a toroidal foliation on a smooth toroidal embedding $X$ with associated extended complex $(\Delta_X,W)$. Let $Z\subseteq X$ be a subvariety, and let $\pi\colon \widetilde{X}:=\on{Bl}_ZX \to X$ be the blow-up along $Z$ with exceptional divisor $E$. 
    Then there is a Zariski open subset $U$ of $X$ such that $U\cap Z\neq\emptyset$, $\pi^{-1}(U)$ is toroidal, and $\widetilde{\cF}:=\pi^{-1}\cF\vert_{\pi^{-1}(U)}$ is a toroidal foliation with the associated extended complex $(\Delta_{\pi^{-1}(U)},\widetilde{W})$ which will be made precise in the proof. Moreover, if $(\Delta_X,W)$ satisfies the condition $(\dagger)$, then so does $(\Delta_{\pi^{-1}(U)},\widetilde{W})$.
\end{lemma}
\begin{proof}
Let $\tau\in\Delta_X$ be the unique cone such that $Z\cap O(\tau)$ is dense in $Z$. Let $\overline{\cF}=\cF\vert_{O(\tau)}$ and define $\phi\colon (Z\cap O(\tau))\setminus\on{Sing}(Z)\to\bZ_{\geq 0}$ by $\phi(z) = \dim_\bC(\overline{\cF}(z)\cap\cT_{Z}(z))$. Since $\phi$ is upper semi-continuous, $U_Z = \{z\in (Z\cap O(\tau))\setminus\on{Sing}(Z)\mid\phi(z) = \min\phi\}$ is a Zariski open subset of $Z$. Then we take $U = (V\setminus Z)\cup U_Z$ where $V=\bigcup_{\tau'\preceq\tau}O(\tau')$. Take a point $x\in U$, and consider a semi-local model $(U_{\sigma,N},W_N)$ of $\cF$ at $x$ as in Lemma~\ref{lem:exist_semi_local_model}(2). Then for any point $y\in\pi^{-1}(x)$, there is a commutative diagram
\[
\begin{tikzcd}
    \on{Spec}\bC(y)\arrow[r,"\alpha"]\arrow[d] & \on{Bl}_{V_{\tau_Z}}(U_{\sigma,N})\arrow[d]\\
    \on{Spec}\bC(x)\arrow[r] & U_{\sigma,N}
\end{tikzcd}.
\]
Thus after suitable torus action, $\on{Im}\alpha\in\on{Bl}_{V_{\tau_Z}}(U_{\sigma,N})$ gives a local model of $\pi^{-1}(U)$ at $y$. Hence $\pi^{-1}(U)$ is toroidal. One can check that $\Delta_{\pi^{-1}(U)}$ consists of all the faces of $\tau$ not containing $\rho_Z$ and $\on{Cone}(\rho_Z,\tau')\subseteq N\otimes \bR$, where $\rho_Z$ is the ray generated by the barycenter of $\tau_Z$  and $\tau'\preceq \tau$ ranges over all faces of $\tau$ not containing $\rho_Z$. Moreover, $\pi^{-1}\cF\vert_{\pi^{-1}(U)}$ is toroidal with the extended complex $(\Delta_{\pi^{-1}(U)},\widetilde{W})$, where $\widetilde{W}$ is defined by $\widetilde{W}(\tau'')=\bR(W_N\cap N \cap \tau'')$ for any $\tau''\in \Delta_{\pi^{-1}(U)}$. 

For the moreover part, if $(\Delta_X,W)$ satisfies the condition $(\dagger)$, then so does $(\Sigma_{\sigma,N},W_N)$ by Lemma~\ref{lem:exist_semi_local_model}(1). 
Let $W'$ be a map from $\Sigma_{\sigma,N}$ that assigns to each face $\tau\preceq\sigma$ the $\bR$-vector subspace $W'(\tau)=\bR(W_N\cap N\cap \tau)$ of $N\otimes\bR$. 
Then $(\Sigma_{\sigma,N},W')$ is an extended complex satisfying the condition $(\dagger)$. 
Hence by Lemma~\ref{lem_property_dagger}(2), $(\Sigma_\sigma^*(u_Z),W_N)$ satisfies the condition $(\dagger)$ where $\Sigma_\sigma^*(u_Z)$ is the star subdivision of $\Sigma_\sigma$ at the barycenter $u_Z$ of $\tau_Z$. Since each cone of $\Delta_{\pi^{-1}(U)}$ is in $\Sigma_\sigma^*(u_Z)$, we conclude that $(\Delta_{\pi^{-1}(U)},\widetilde{W})$ satisfies the condition $(\dagger)$.
\end{proof}

In preparation for the proof of Proposition~\ref{ND_toroidal}, we state and prove the following lemma which relates the invariance of a divisor to its image. 
\begin{lemma}\label{lem:contained_in_invariant_divisor}
    Let $\cF$ be a toroidal foliation on $X$, whose extended complex $(\Delta_X,W)$ satisfies the condition $(\dagger)$. Let $X'$ be a toroidal embedding with conical complex $\Delta_{X'}$, and let $\phi\colon X' \to X$ be a toroidal morphism. 
    For any $\rho'\in\Delta_{X'}(1)$, if $\phi(V(\rho'))\subseteq V(\rho)$ for some $\rho \in \Delta_X(1)$ where $V(\rho)$ is $\cF$-invariant, then $V(\rho')$ is $\phi^{-1}\cF$-invariant.
\end{lemma}
\begin{proof}
    Denote by $f\colon (\Delta_{X'},W')\to(\Delta_X,W)$ the corresponding map between the extended complexes, and let $\sigma\in \Delta_X$ be the minimal cone containing $f(\rho')$. Then $\rho\preceq \sigma$. Since $\rho\nsubseteq W(\sigma)$, we have $\sigma\nsubseteq W(\sigma)$, and thus by the condition $(\dagger)$, $f(\rho')\nsubseteq W(\sigma)$. Hence $W'(\rho')\subseteq f_{\rho',\sigma}^{-1}(\bR(W(\sigma)\cap (f_{\rho',\sigma}\otimes\bR)(\rho')))=\{0\}$ and $W'(\rho')=\{0\}$. 
    Therefore, $V(\rho')$ is $\phi^{-1}\cF$-invariant by Lemma~\ref{lem:complex_div_inv}.
\end{proof}
\begin{corollary}\label{cor:contained_in_invariant_divisor}
    \begin{enumerate}
        \item Assume as in Lemma~\ref{lem:contained_in_invariant_divisor}. If $\dim(\phi(V(\rho'))) \leq \dim X-\on{rank}\cF-1$, then $V(\rho')$ is foliation invariant.
        \item Let $\cF$ be a toroidal foliation on a smooth toroidal embedding $X$, whose extended complex $(\Delta_X,W)$ satisfies the condition $(\dagger)$.  If $Z$ is a subvariety contained in an $\cF$-invariant divisor $D=V(\rho)$, $\rho \in \Delta_X(1)$, then the exceptional divisor $E$ obtained by blowing up $Z$ is foliation invariant.
    \end{enumerate}
\end{corollary}
\begin{proof}
    For (1), note that $\dim(\phi(V(\rho')))=\dim X-\dim \sigma$ where $\sigma\in\Delta_X$ is the minimal cone containing $f(\rho')$. Thus $\dim\sigma\geq\on{rank}\cF + 1\geq\dim_\bR W(\sigma)+1$, and there exists a ray $\rho\in\sigma(1)$ not contained in $W(\sigma)$ by the condition $(\dagger)$ of $(\Delta_X,W)$. 
    Hence $W(\rho) = (W(\sigma)\cap N_\rho)\otimes\bR = \{0\}$ and $V(\rho)$ is foliation invariant by Lemma~\ref{lem:complex_div_inv}. 
    Therefore, $V(\rho')$ is foliation invariant by Lemma~\ref{lem:contained_in_invariant_divisor}.
    
    For (2), we denote the blow-up by $\pi\colon \on{Bl}_{Z}(X)\to X$, and let $\tau\in\Delta_X$ be the unique cone such that $O(\tau)\cap Z\neq\emptyset$. As in the proof of
    Lemma~\ref{lem:blowup_toroidal_model}, for a general point $z\in Z$, there is a semi-local model $(U_{\sigma,N},W_N)$ of $\cF$ at $z$, such that formally locally near $z$, $Z$ is given by $V_{\tau_Z}$ where $\tau\preceq\tau_Z\preceq \sigma$, and $\pi$ is given by $\psi\colon \on{Bl}_{V_{\tau_Z}}(U_{\sigma,N})\to U_{\sigma,N}$. Since $\rho\preceq \tau$ and $(\Sigma_\sigma,W_N)$ satisfies the condition $(\dagger)$, we may thus apply Lemma~\ref{lem:contained_in_invariant_divisor} to conclude that $\on{Exc}(\psi)$ is foliation invariant, and therefore, $\on{Exc}(\pi)$ is foliation invariant.
\end{proof}

\begin{proposition}\label{ND_toroidal}
    Let $\cF$ be a toroidal foliation of rank $r$ on a normal variety $X$ of dimension $n$.
    Then $\cF$ is non-dicritical if and only if its associated extended complex $(\Delta_X,W)$ satisfies the condition $(\dagger)$. 
    In particular, if $\cF_W$ is a toric foliation on a toric variety $X_\Sigma$, then $\cF_W$ is non-dicritical if and only if $(\Sigma,W)$ satisfies the condition $(\dagger)$. 
\end{proposition}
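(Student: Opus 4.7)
The plan is to reduce the toroidal statement to the toric one via the local model isomorphisms, and then prove the toric statement by a two-way combinatorial analysis of $(\dagger)$.

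For the reduction, I will use that both non-dicriticality and the condition $(\dagger)$ are detected formally locally at each closed point. A birational modification of $X$ with a prime exceptional divisor $E$ centered at $x \in X$ corresponds, via the isomorphism $\psi_x \colon \widehat{\cO}_{X,x} \cong \widehat{\cO}_{U_{\sigma_0}, p_0}$ from any local model, to data on the formal toric model $\on{Spf}\widehat{\cO}_{U_{\sigma_0},p_0}$; both the foliation-invariance of $E$ and the dimension of $c_X(E)$ are preserved under this identification. This reduces the whole proposition to the ``in particular'' statement: for a toric foliation $\cF_W$ on an affine toric variety $U_\sigma$, non-dicriticality is equivalent to $(\Sigma_\sigma, W)$ satisfying $(\dagger)$.

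For the forward direction of the toric statement (contrapositive), suppose $(\Sigma_\sigma, W)$ fails $(\dagger)$, giving $\tau \preceq \sigma$ with $v \in \on{Relint}(\tau) \cap W \cap N$ and $\tau \not\subseteq W$. I would pass via Lemma \ref{lem:exist_semi_local_model} to a semi-local model with full-dimensional smooth cone, where $W$ enjoys the decomposition $W = (W \cap \bC h(\sigma_0)) \oplus \on{Span}_\bC\{e_j : j > t,\, e_j \in W\}$. Enlarging the bad face by setting $\tau^* := h(\tau) + \sum_{j > t,\, e_j \in W} \bR_{\geq 0} e_j$ and $v^* := h(v) + \sum_{j > t,\, e_j \in W} e_j$ yields $v^* \in \on{Relint}(\tau^*) \cap W \cap N$ with $\tau^* \not\subseteq W$. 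When $\dim\tau^* \geq r+1$, a single star subdivision along $\bR_{\geq 0} v^*$ extracts by Corollary \ref{basics} a non-invariant divisor $D_{v^*}$ whose center $V_{\tau^*}$ has dimension at most $n - r - 1 = c - 1$, witnessing dicriticality. When $\dim\tau^*$ is smaller, one iterates: restrict to $D_{v^*}$ via Proposition \ref{restriction}(2) to obtain an induced toric foliation of rank $r-1$ that still fails $(\dagger)$ in the quotient fan, apply induction on $r$ to produce a dicritical divisor on some modification of $D_{v^*}$, and lift it to $U_\sigma$ to conclude.

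For the backward direction, suppose $(\dagger)$ holds at every local model but, toward contradiction, some birational $\pi \colon Y \to X$ carries a prime divisor $E \subseteq Y$ which is not $\pi^{-1}\cF$-invariant with $\dim c_X(E) \leq c-1$. Choose a generic $x \in c_X(E)$ and a local model $(U_{\sigma_0}, p_0, W_{p_0})$ at $x$; then $x$ lies in some orbit $O_\tau$ with $\dim\tau \geq r+1$, forced by $\dim c_X(E) \leq n - \dim\tau$. Applying $(\dagger)$ to $\tau$ yields two cases: if $\tau \subseteq W_{p_0}$, then $V_\tau$ is tangent to $\cF_{W_{p_0}}$ and any divisor centered in $V_\tau$ is automatically invariant; if $\on{Relint}(\tau) \cap W_{p_0} \cap N = \emptyset$, the toric foliation is ``totally transverse'' to $V_\tau$ at its generic point in a way that forces every exceptional divisor over $V_\tau$ (whether via toric star subdivisions or general formal blowups) to be invariant. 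In either case $E$ would be invariant, a contradiction. The main obstacles are the inductive step in the forward direction, where one must verify that the failure of $(\dagger)$ descends correctly to the quotient pair associated with $D_{v^*}$ and that lifted non-invariant divisors remain non-invariant upstairs, and the ``irrational'' subcase of the backward direction, where $\on{Relint}(\tau) \cap W_{p_0} \cap N = \emptyset$ does not immediately give $\tau \subseteq W_{p_0}$ and one must work in the formal/analytic category to show that non-toric blowups inside $V_\tau$ cannot extract non-invariant divisors.
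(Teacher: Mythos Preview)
Your backward direction has a genuine gap. When you pick a generic $x \in c_X(E)$ and a \emph{local} model $(U_{\sigma_0},p_0,W_{p_0})$, the cone $\sigma_0$ records only the codimension of the \emph{stratum of $\Xi$} containing $x$, not the codimension of $c_X(E)$. If $c_X(E)$ is, say, a curve meeting the open stratum, then $\sigma_0=\{0\}$ and your inequality ``$\dim\tau\geq r+1$'' fails. The paper closes this gap with Lemma~\ref{local_model_lem}: after reducing to $X$ smooth, one builds a \emph{semi-local} model $(U_\sigma,p,W_p)$ with full-dimensional smooth $\sigma$ in which $c_X(E)$ is formally $V_\tau$ for some face $\tau\preceq\sigma$, so that $\dim\tau=\on{codim}c_X(E)\geq r+1$ honestly, and $(\Sigma_\sigma,W_p)$ inherits $(\dagger)$. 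From there one argues by Zariski's lemma and induction on the number of blow-ups: since $\dim\tau>r$ forces $\tau\not\subseteq W_p$, $(\dagger)$ gives $\on{Relint}(\tau)\cap W_p\cap N=\emptyset$, so the barycentric ray is outside $W_p$ and the exceptional divisor is invariant by Corollary~\ref{basics}; $(\dagger)$ is preserved (Lemma~\ref{lem_property_dagger}) and the next center again contains an invariant ray, so the argument repeats. Your ``Case~1'' is also stated incorrectly: if $\tau\subseteq W$, then rays in $\on{Relint}(\tau)$ give \emph{non}-invariant divisors by Corollary~\ref{basics}, not invariant ones. (This case is vacuous once $\dim\tau\geq r+1$, but the reasoning you give is backwards.)

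In the forward direction your idea of enlarging to $\tau^*=h(\tau)+\sum_{j>t,\,e_j\in W}\bR_{\geq 0}e_j$ is good, and your first case is correct. The inductive reduction to $D_{v^*}$ is unnecessary: if you first replace $x$ by a point in the stratum of $\Xi$ corresponding to the failing face (so that $\tau=\sigma_0$), then using the decomposition $W_p=(W_p\cap\bC\sigma_0)\oplus\sum_{j>t,\,e_j\in W_p}\bC e_j$ one computes $\dim\tau^*-r=\dim\sigma_0-\dim(W_p\cap\bC\sigma_0)\geq 1$ since $\sigma_0\not\subseteq W_p$, so $\dim\tau^*\geq r+1$ always. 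The paper instead first extracts a non-invariant $E$ by a sequence of stratum blow-ups (so the centers are honest algebraic subvarieties of $X$, not merely formal), and in the low-dimensional case $\dim\sigma_0\leq r$ proves a Claim on $\dim(W+\bC\sigma_0)/\bC\sigma_0$ to find enough $e_j\in W$ and then blows up a carefully chosen general subvariety $Z$ of dimension $c-1$; this makes the transfer from the semi-local picture back to $X$ explicit, which your ``detected formally locally'' reduction does not.
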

\begin{proof}
    (Only if part) 
    Suppose $(\Delta_X,W)$ does not satisfies the condition $(\dagger)$. We are going to introduce a foliation non-invariant divisor $E$ over $X$ whose center $c_X(E)$ has dimension $\leq n-r-1$, hence $\cF$ is dicritical. To this end, let $\tau\in \Delta_X$ be a cone such that $(\tau,W(\tau))$ is dicritical, and let $u_0\in N_\tau\cap W(\tau)\cap\on{Relint}(\tau)$ be a primitive element. Choose a subvariety $Z\subseteq V(\tau)$ that intersects $O(\tau)$ and is complementary to $\cF\vert_{V(\tau)}$ in the following sense: 
    \begin{itemize}
        \item $\on{rank}\cF\vert_{V(\tau)} + \dim Z = \dim V(\tau)$ and 
        \item $(\cF\vert_{V(\tau)})(z)\cap \cT_Z(z)=0$ at a general point $z\in Z$.
    \end{itemize}
    Choose a semi-local model $(U_{\sigma,N},W_N)$ of $\cF$ at $z$ as in Lemma~\ref{lem:exist_semi_local_model}(2). Then we have $\tau_Z=\on{Cone}(\tau,\widetilde\tau)$ where 
    \begin{itemize}
        \item $\tau_Z\preceq\sigma$ is the cone so that $Z$ formally locally corresponds to $V_{\tau_Z}$, and 
        \item $\widetilde{\tau}\preceq\sigma$ is the face generated by all the rays $\rho\preceq\sigma$ contained in $W_N$ but $\rho\nsubseteq N_\tau\otimes\bR$.
    \end{itemize}
    Now consider a smooth subdivision $\Delta'$ of $\Delta_X$ such that $\rho_0:=\bR_{\geq 0}u_0\in\Delta'$, and denote the corresponding toroidal morphism by $\pi\colon X'\to X$. 
    Recall that $\sigma = \tau\times\sigma_1$. (See the proof of Lemma~\ref{lem:exist_semi_local_model} for the construction of $\sigma_1$.) 
    Let $\Sigma_\sigma'$ be the subdivision of $\Sigma_{\sigma}$ consisting of $\on{Cone}(\tau',\sigma')$ where $\sigma'\preceq\sigma_1$ and $\tau'\in \Delta'$ with $\tau'\subseteq \tau$. 
    Then we have the following commutative diagram:
    \[
    \begin{tikzcd}
        \pi^{-1}(z)\arrow[r,"\alpha"]\arrow[d,"\pi"] &\widetilde\pi^{-1}(\gamma_\sigma) \arrow[d]\arrow[r,hook] &X_{\Sigma'_\sigma}\arrow[d,"\widetilde\pi"]\\
        \{z\} \arrow[r] & \{\gamma_\sigma\} \arrow[r,hook] & U_{\sigma,N}
    \end{tikzcd}
    \]
    where $\widetilde{\pi}$ is the base change of $\pi$ to $U_{\sigma,N}$. 
    We then choose $z'\in \pi^{-1}(z)$ such that $\alpha(z')=\gamma_{\tau_0}\in\widetilde\pi^{-1}(\gamma_\sigma)$, where $\tau_{0}=\on{Cone}(\rho_0,\sigma_1)$. Then formally locally near $z'$, $\pi^{-1}(Z)$ is given by $\tau_{\textnormal{n-inv}}=\on{Cone}(\rho_0,\widetilde\tau)\preceq \tau_0$ as $\tau_Z=\on{Cone}(\tau,\widetilde{\tau})$. As $\tau_{\textnormal{n-inv}}\subseteq W_N$, the barycenter of $\tau_{\textnormal{n-inv}}$ is also in $W_N$, so the exceptional divisor of $\on{Bl}_{V_{\tau_{\textnormal{n-inv}}}}(X_{\Sigma'_\sigma})\to X_{\Sigma'_\sigma}$ is foliation non-invariant. As a result, if $\widetilde Z$ is the component of $\pi^{-1}(Z)$ containing $z'$, then the exceptional divisor obtained by blowing up $\widetilde Z$ is foliation non-invariant. Finally, since $(\tau,W(\tau))$ is dicritical, we have $\tau \nsubseteq W_N$ and thus $\on{rank}\cF\vert_{V(\tau)}=\dim (W_N+\bC\tau)/\bC\tau>r-
    (n-\dim V(\tau))$. Hence $\dim Z\leq n-r-1$.

    (If part) Suppose $(\Delta_X,W)$ satisfies the condition $(\dagger)$. Let $E$ be an exceptional divisor over $X$ such that $c_X(E)\subseteq X$ has dimension $\leq n-r-1$. We are going to show that $E$ is foliation invariant, and hence, $\cF$ is non-dicritical. First, we choose a smooth subdivision $\Delta_{1}$ of $\Delta_X$, and denote the corresponding toroidal morphism by $\pi\colon X_1\to X$; we may choose $\Delta_1=\Delta_{X_1}$ such that $\on{Exc}(\pi)\subseteq X_1$ is a divisor. If $c_{X_1}(E)$ is a divisor, then we have $c_{X_1}(E) = V(\rho_1)$ for some $\rho_1\in \Delta_{X_1}(1)$ and $\pi(V(\rho_1)) = c_X(E)$. By Corollary~\ref{cor:contained_in_invariant_divisor}(1), $V(\rho_1)$ is foliation invariant, and so is $E$. 
    
    Then we may assume $c_{X_1}(E)$ is not a divisor. 
    Let $\widetilde\pi_1\colon \widetilde X_{2}\to X_1$ be the blow-up of $X_1$ along $c_{X_1}(E)$ and $X_{2}:=\widetilde\pi_1^{-1}(U_1)$, where $U_1\subseteq X_1$ is defined as in Lemma~\ref{lem:blowup_toroidal_model}. 
    If we write $\pi_1:=\widetilde\pi_1\vert_{X_{2}}\colon X_{2}\to X_1$, then
    the pullback foliation $\cF_{2}$ on $X_{2}$ is toroidal, the associated extended complex $(\Delta_{X_{2}},W_{2})$ satisfies the condition $(\dagger)$, and $\on{Exc}(\pi_{1})=V(\rho_2)$ is a prime divisor where $\rho_2\in\Delta_{X_2}(1)$. 
    \begin{claim}
    $V(\rho_2)$ is foliation invariant.
    \end{claim}
    \begin{claimproof}
    Let $\tau\in\Delta$ be the unique cone such that $c_X(E)\cap O(\tau)\neq\emptyset$, and choose a semi-local model $(U_{\sigma,N},W_N)$ of $\cF$ at a general point $z\in Z=c_X(E)$ such that $Z$ is given by $V_{\tau_Z}$ formally locally for some $\tau\preceq\tau_Z\preceq\sigma$.  Let $\Sigma'_\sigma$ be the subdivision of $\Sigma_\sigma$ consisting of $\on{Cone}(\tau',\sigma')$ where $\tau'\in \Delta'$ ranges over all cones $\tau'\subseteq \tau$ and $\sigma'\preceq\sigma_1$ ranges over all faces of $\sigma_1$. Then formally locally near $z$, $\pi_1\colon X_1\to X$ is given by $X_{\Sigma'_\sigma}\to U_{\sigma,N}$, and we have the same diagram as in the only if part. Let $z'\in c_{X_1}(E)\cap\pi^{-1}(z)$ be a general point. Then there exists $\sigma_1\in\Sigma'_\sigma$, such that $\sigma_1\cap \on{Relint}(\tau_Z)\neq\emptyset$, $\alpha(z')\in V_{\sigma_1}$, and $c_{X_1}(E)$ is contained in $V_{\sigma_1}$ formally locally near $z'$. Let $S$ be the set of rays $\rho$ in $\Sigma'_\sigma$ such that there is a common cone in $\Sigma'_\sigma$ containing $\rho$ and $\sigma_1$. Then $S$ spans a vector space of dimension $\geq \dim \tau_Z\geq\on{rank}(\cF)+1$. Hence there is a ray in $S$ not contained in $W_N$, which says that formally locally near $z'$, $V_{\sigma_1}$ is contained in some foliation invariant torus divisor. Now we may apply Corollary~\ref{cor:contained_in_invariant_divisor}(2) to conclude that $V(\rho_2)=\on{Exc}(\pi_1)$ is foliation invariant. $\quad\blacksquare$
\end{claimproof}
    
    Having constructed $X_i$ such that 
    \begin{itemize}
        \item $c_{X_i}(E)$ is not a divisor,
        \item $c_{X_i}(E)\subseteq D_i=V(\rho_i)$ for some $\rho_i\in\Delta_{X_i}(1)$, and 
        \item $D_i$ is foliation invariant,
    \end{itemize}
    let $\widetilde\pi_i\colon \widetilde X_{i+1}\to X_i$ be the blow-up of $X_i$ along $c_{X_i}(E)$, and let $X_{i+1}:=\widetilde\pi_i^{-1}(U)$, where $U\subseteq X_i$ is defined as in Lemma~\ref{lem:blowup_toroidal_model}. If we write $\pi_i:=\widetilde\pi_i\vert_{X_{i+1}}\colon X_{i+1}\to X_i$, then the pullback foliation $\cF_{i+1}$ on $X_{i+1}$ is toroidal, the associated extended complex $(\Delta_{X_{i+1}},W_{i+1})$ satisfies the condition $(\dagger)$, $\on{Exc}(\pi_{i})=V(\rho_{i+1})$ for some $\rho_{i+1}\in\Delta_{X_{i+1}}(1)$, and $\on{Exc}(\pi_{i})$ is foliation invariant by Corollary~\ref{cor:contained_in_invariant_divisor}(2). By Zariski's lemma (cf. \cite[Lemma 2.45]{KM98}), $E=\on{Exc}(\pi_{\ell-1})\subseteq X_\ell$ is foliation invariant for some $\ell$, and this completes the proof.
\end{proof}

\section{Singularities of toric and toroidal foliated pairs}\label{sec:Singularities}
\subsection{Support functions}
We first fix the following definition (cf. \cite[Definition 4.2.11]{CLS11}):

\begin{definition}\label{notation1}
\begin{enumerate}
    \item Let $\Delta$ be a conical complex. (See Section~\ref{sec:conical_cplx}.)
    A function $\phi\colon |\Delta| \to \bR$ is called a \emph{support function} if $\phi$ is linear on each $\sigma\in\Delta$. 
    \item Let $\cF$ be a strict toroidal foliation on $X$ with associated extended complex $(\Delta,W)$, and let $(\cF,D)$ be a toroidal foliated pair. 
    Define $\vp_{(\cF,D)}\colon|\Delta|\to\bR$ to be the support function associated to $-\sum_{\rho\subseteq W(\rho)}V(\rho)+D$; that is, $\vp_{(\cF,D)}$ is linear on each cone of $\Delta$, and $\vp_{(\cF,D)}(v_\rho)=\iota(V(\rho))-\on{mult}_{V(\rho)}D$ for all $\rho\in\Delta(1)$. Note that this actually defines a linear function on each cone $\sigma\in\Delta$: take a local model $(U_{\sigma,N},W_N)$ of $\cF$ at a point of $O(\sigma)$, and suppose that $D$ corresponds to the toric divisor $D_{\sigma,N}$ on $U_{\sigma,N}$. Then $\vp_{(\cF,D)}\vert_{\sigma}=\vp_{K}\vert_\sigma$ where $K=K_{\cF_{W_N}}+D_{\sigma,N}$, which is $\bR$-Cartier by assumption. 
\end{enumerate}
\end{definition}

\begin{lemma}\label{non_neg_support_fcn}
    Let $(\cF,D = \sum_{\rho\in\Delta(1)} d_\rho V(\rho))$ be a toroidal foliated pair on a normal variety $X$. 
    Suppose $d_\rho\leq\iota(V(\rho))$ for all $\rho\in\Delta(1)$, then the support function $\phi_{(\cF,D)}$ is non-negative. 
\end{lemma}
\begin{proof}
    It is straightforward. 
\end{proof}

\begin{proposition}\label{discrepancy}
    Suppose $(\cF,D)$ is a toroidal foliated pair on a normal variety $X$ with associated extended complex $(\Delta,W)$. 
    Let $v_0\in \on{Relint}(\sigma)\cap N_\sigma$ be a primitive element for some $\sigma\in\Delta$ and $\rho_0$ be the ray generated by $v_0$. 
    Let $\Delta^*(v_0)$ be the star subdivision of $\Delta$ at $v_0$ and $\pi\colon X'\to X$ be the associated birational toroidal morphism 
    (cf. Proposition-Definition~\ref{propdef:star_subdiv_complex}(2) and Theorem~\ref{thm:toroidal_subdivision}) with $\cF':=\pi^{-1}\cF$. 
    Then we have
    \begin{enumerate}
        \item two support functions $\phi_{(\cF,D)}$ and $\phi_{(\cF',D')}$ coincide where $D' = \pi^*(K_\cF+D)-K_{\cF'}$, 
        \item $K_{\cF'}+\pi_*^{-1}D$ is $\bR$-Cartier and the support function $\phi_{(\cF',\pi_*^{-1}D)}$ has value $\iota(V(\rho_0))$ at $v_0$, and 
        \item $\phi_{(\cF,D)}(v_0) = \iota(V(\rho_0))+a(V(\rho_0),\cF,D)$. 
    \end{enumerate}
\end{proposition}
\begin{proof}
    Note that $V(\rho_0)$ is the exceptional divisor of $\pi$ and is $\bQ$-Cartier. (For a reference, see \cite[Proposition 11.1.6]{CLS11}.) 
    Thus, we have 
    \begin{equation}\label{eq:discrepancy}
        K_{\cF'}+\pi_*^{-1}D = \pi^*(K_{\cF}+D)+a(V(\rho_0),\cF,D)V(\rho_0)
    \end{equation}
    is $\bR$-Cartier and the support function $\phi_{(\cF',\pi_*^{-1}D)}$ has value $-(-\iota(V(\rho_0)))=\iota(V(\rho_0))$ at $v_0$. 
    This shows (2). 

    To show (1) and (3), we first assume that $(\cF,D)$ is a toric foliated pair on a normal toric variety $X_\Sigma$ where $\Sigma$ is a fan in $N_\bR$. 
    Then (1) follows from \cite[Proposition 6.2.7]{CLS11} and (3) follows from the following observation: 
    \begin{align*}
            a(V(\rho_0),\cF,D) &= \on{ord}_{V(\rho_0)}\big((K_{\cF'}+\pi_*^{-1}D)-\pi^*(K_{\cF}+D)\big) \\
            &= -\phi_{K_{\cF'}+\pi_*^{-1}D}(v_0)+\phi_{\pi^*(K_{\cF}+D)}(v_0) \\ 
            &= -\iota(V(\rho_0))+\phi_{K_\cF+D}(v_0).
    \end{align*}
    
    In general, let $x\in O(\sigma)$ and $(U_{\sigma,\,N},W_N)$ be a semi-local model of $\cF$ at $x$. 
    Then 
    \[\phi_{(\cF,D)}(v_0)=\phi_{K_{\cF_{W_N}}+D_{\sigma,\,N}}(v_0)=\iota(V(\rho_0))+a(V(\rho_0),\cF_{W_N},D_{\sigma,\,N}) = \iota(V(\rho_0))+a(V(\rho_0),\cF,D)\]
    where the first and the last equalities follows from the property of semi-local models. 
    Thus, this shows (3). 
    For (1), it suffices to show $\phi_{(\cF',D')}(v_0) = \phi_{(\cF,D)}(v_0)$, which follows from (3) and the equation~(\ref{eq:discrepancy}). 
\end{proof}

\subsection{Foliated log smooth pairs}
Similar to \cite[Definition 3.1]{CS} and \cite[Section 3.2]{ambro2021positivity}, we introduce the definition for a foliated pair of arbitrary rank to be foliated log smooth as follows: 
\begin{definition}\label{LogSmAnyRank}
\begin{enumerate}
\item
A foliation $\cF$ on a normal variety $X$ is called \emph{foliated log smooth} if $\cF$ is toroidal, $X$ is $\bQ$-factorial, and the associated extended complex $(\Delta,W)$ satisfies the condition $(\dagger)$. 

\item
We say a foliated pair $(\cF,D)$ on a normal variety $X$ is \emph{foliated log smooth} if $(\cF,D)$ is a toroidal foliated pair and $\cF$ is foliated log smooth. 
\end{enumerate}
\end{definition}

\begin{remark}\label{inv_log_sm}
    \begin{enumerate}
        \item When $\cF$ is an algebraically integrable toric foliation, our definition is slightly weaker than the one in \cite[Definition 2.17]{das2023acc} as we do not have any requirement on the base of the morphism that induces $\cF$. 
        \item Let $\cF$ be a corank one foliation on a normal variety $X$. 
        Our definition for a foliated log smooth pair is different from \cite[Definition 3.1]{CS}, which requires $X$ to be smooth. Also, when $X$ is smooth, our definition requires $\mc F$ to have simple singularities of type 1, while loc. cit. allows type 2. 
    \end{enumerate}
\end{remark}

\begin{proposition}\label{SimpleCanonical}
    Suppose $\cF$ is a toroidal foliation on a smooth toroidal embedding $X$ that is foliated log smooth. 
    Then $\cF$ has only canonical singularities.
\end{proposition}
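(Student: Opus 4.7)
The plan is to prove canonicality by induction on the number of blow-ups needed to extract a given exceptional divisor $E$ over $X$ (Zariski's lemma, \cite[Lemma 2.45]{KM98}), reducing at each step to a toric computation in a semi-local model and using Lemma~\ref{disc_fol_stratum} together with Lemma~\ref{lem_property_dagger}(1) to propagate foliated log smoothness across blow-ups.

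For the base case, let $\pi \colon Y \to X$ be a single blow-up along $Z = c_X(E)$. At a general point $z \in Z$, Lemma~\ref{local_model_lem} furnishes a semi-local model $(U_\sigma, p, W_p)$ under which $Z$ corresponds to a torus-invariant stratum $V_\tau$ with $\tau = \on{Cone}(e_j \mid j \in J)$, and $\pi$ corresponds to the star subdivision along $u = \sum_{j \in J} e_j$, with $E$ corresponding to the ray $\bR_{\geq 0} u$. The computation carried out in the proof of Lemma~\ref{disc_fol_stratum} (or alternatively Proposition~\ref{discrepancy}(3) combined with Proposition~\ref{can_divisor_prop}) gives
\[
\iota(E) + a(E, \cF) = \#\{ j \in J \mid e_j \in W_p \}.
\]
If $E$ is $\cF$-invariant, then $\iota(E) = 0$ and the right-hand side is nonnegative. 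If $E$ is not $\cF$-invariant, then $\iota(E) = 1$; Corollary~\ref{basics} yields $u \in W_p$, and since $u \in \on{Relint}(\tau) \cap W_p \cap N$, the condition $(\dagger)$ forces $\tau \subseteq W_p$, making the right-hand side equal to $|J| \geq 1$.

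For the inductive step, factor a longer extracting sequence as $\pi_1 \colon X_1 \to X$ followed by $\pi' \colon X_n \to X_1$. Lemma~\ref{disc_fol_stratum} ensures that $(\pi_1^{-1}\cF, 0)$ is toroidal near the relevant exceptional locus, and Lemma~\ref{lem_property_dagger}(1) ensures that the condition $(\dagger)$ passes to the refined local fans. Hence $(\pi_1^{-1}\cF, 0)$ is again foliated log smooth where needed, and the inductive hypothesis applied to $E$ over $X_1$, combined with the base-case bound $a(E_1, \cF) \geq 0$ for the exceptional divisor $E_1$ of $\pi_1$, yields $a(E, \cF) \geq 0$ via the standard ramification identity $a(E, \cF) = a(E, \pi_1^{-1}\cF) + a(E_1, \cF) \cdot \on{mult}_E(\pi'^* E_1)$. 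The main technical obstacle is that Lemma~\ref{disc_fol_stratum} only guarantees the toroidal structure (and hence foliated log smoothness) on a neighborhood of $\pi_1^{-1}(z_1)$ for a \emph{general} $z_1 \in Z_1$, so one must ensure that the center $c_{X_1}(E)$ meets this open locus; this can be finessed by choosing the blow-up sequence from Zariski's lemma so that each successive center is compatible with the evolving toroidal stratification, which is always achievable after further refinement.
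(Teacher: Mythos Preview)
Your proposal is correct and follows essentially the same approach as the paper: both use Zariski's lemma to reduce to a sequence of blow-ups, invoke Lemma~\ref{local_model_lem} to pass to a semi-local model, compute the log discrepancy of the first exceptional divisor via the count $\#\{j\in J\mid e_j\in W_p\}$ (using condition~$(\dagger)$ when $\iota(E)=1$), and appeal to Lemma~\ref{disc_fol_stratum} to maintain foliated log smoothness through the induction. The paper leaves the ramification identity and the general-point issue implicit, whereas you spell them out; your bound $|J|\geq 1$ is weaker than the paper's $|J|=s\geq 2$ (since $Z$ has codimension $\geq 2$), but either suffices for $a(E,\cF)\geq 0$.
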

\begin{proof}
    Let $E$ be a divisor on $Y$ over $X$ with center $Z:=c_X(E)\subseteq X$.
    After shrinking around the generic point of $Z$, we can assume that $Z$ is smooth. 
    By Zariski's lemma (cf. \cite[Lemma 2.45]{KM98}), after possibly replacing $Y$ by a higher model, we can assume that $\pi$ is a composition of blow-ups of subvarieties centered on $Z$. 
    We proceed by induction on the number of blow-ups. 
    Thus, it suffices to show that if $\pi\colon \widetilde{X}\to X$ is the blow-up along $Z$, then 
    \begin{enumerate}
        \item $\pi^{-1}\cF$ is foliated log smooth in a neighborhood of $\pi^{-1}(z)$, where $z\in Z$ is a general point, and
        \item $a(E_0,\cF)\geq 0$, where $E_0$ is the exceptional divisor of $\pi$. 
    \end{enumerate}
    
    (1) follows from Lemma~\ref{lem:blowup_toroidal_model}. 
    
    For (2), we first use Lemma~\ref{lem:exist_semi_local_model}(2) to get a semi-local model $(U_{\sigma,N},W_N)$ at a general point $z\in Z$ and $Z$ corresponds to $V_{\tau_Z}$ formally locally where $\tau_Z\preceq\sigma$. 
    Note that $s:=\dim\tau_Z\geq 2$ as $Z$ has codimension at least $2$. 
    Write $N=\oplus_{i=1}^n\bZ e_i$, $\sigma=\on{Cone}(e_1,\ldots,e_n)$, and $\tau_Z=\on{Cone}(e_1,\ldots,e_s)$. 
    Let $u_0=\sum_{i=1}^s e_i$ be the barycenter of $\tau_Z$. 
    Then the log discrepancy $\iota(E_0)+a(E_0,\cF) = \phi_{K_{\cF_{W_N}}}(u_0)\geq 0$ as $\phi_{K_{\cF_{W_N}}}(e_i)\geq 0$ for all $1\leq i\leq n$. 
    Moreover, if $\iota(E_0)=1$, then $u_0\in W_N$ by Lemma~\ref{lem:complex_div_inv}, and hence by the condition $(\dagger)$, we have $e_i\in W_N$ for all $1\leq i\leq s$. 
    Therefore, $\phi_{K_{\cF_{W_N}}}(u)=\sum_{i=1}^s\phi_{K_{\cF_{W_N}}}(e_i)\geq s\geq 2$. 
    To sum up, we have the discrepancy $a(E_0,\cF)\geq 0$. 
\end{proof}

\begin{theorem}\label{Flogsm_lc}
    Let $(\cF,D = \sum d_iD_i)$ be a foliated log smooth toroidal foliated pair on a normal variety $X$. 
    Then $(\cF,D)$ is log canonical if and only if $d_i\leq\iota(D_i)$ for all $i$. 
\end{theorem}
\begin{proof}
    Let $(\Delta,W)$ be the extended complex of the toroidal foliation $\cF$. 
    If $d_i>\iota(D_i)$ for some $i$, then the support function $\phi_{(\cF,D)}$ has a negative value at some primitive element $u\in N_\sigma$ for some $\sigma\in \Delta$. 
    Let $\Delta^*(u)$ be the star subdivision at $u$ and $\pi\colon X'\to X$ be the corresponding birational morphism with the exceptional divisor $E$. 
    By Proposition~\ref{discrepancy}, we have $\iota(E)+a(E,\cF,D) = \phi_{(\cF,D)}(u)<0$ and thus, $(\cF,D)$ is not log canonical. 
    
    Conversely, suppose $d_i\leq\iota(D_i)$ for all $i$. 
    Let $f\colon Y\to X$ be a birational morphism and $E$ be a divisor on $Y$ with center $Z:=c_X(E)\subseteq X$. 
    We will show $\iota(E)+a(E,\cF,D)\geq 0$. 
    
    We first assume that $X$ is smooth. 
    After shrinking around the generic point of $Z$, we can assume that $Z$ is smooth. 
    By Zariski's lemma (cf. \cite[Lemma 2.45]{KM98}), after possibly replacing $Y$ by a higher model, we can assume that $\pi$ is a composition of blow-ups of subvarieties centered on $Z$. 
    We proceed by induction on the number of blow-ups. 
    Thus, it suffices to show that if $\pi\colon \widetilde{X}\to X$ is the blow-up along $Z$, then 
    \begin{enumerate}
        \item $(\pi^{-1}\cF,\widetilde{D})$ is toroidal log smooth in a neighborhood of $\pi^{-1}(z)$, where $\widetilde{D}=\pi^*(K_\cF+D)-K_{\pi^{-1}\cF}$ and $z\in Z$ is a general point, and
        \item $\iota(E_0)+a(E_0,\cF,D)\geq 0$, where $E_0$ is the exceptional divisor of $\pi$. 
    \end{enumerate}
    
    (1) follows from Lemma~\ref{lem:blowup_toroidal_model}. 
    
    For (2), we first use Lemma~\ref{lem:exist_semi_local_model} to get a semi-local model $(U_{\sigma,N},W_N,D_{\sigma,N})$ at a general point $z\in Z$ and $Z$ corresponds to $V_{\tau_Z}$ where $\tau_Z\preceq\sigma$. 
    Note that $\phi_{K_{\cF_{W_N}}+D_\sigma}$ is non-negative by Lemma~\ref{non_neg_support_fcn}, we have $\iota(E_0)+a(E_0,\cF,D) = \phi_{K_{\cF_{W_N}}+D_{\sigma,N}}(u_0)\geq 0$ where $u_0$ is the barycenter of $\tau_Z$. 

    Now suppose $X$ is not smooth. 
    Let $\Delta'$ be a smooth conical complex which is a subdivision of $\Delta$. 
    Then this subdivision introduces a birational morphism $\pi\colon Y\to X$ with $Y$ smooth. 
    Then we define $D_Y = \pi^*(K_\cF+D)-K_{\pi^{-1}\cF}$. 
    Thus, for any prime $\pi$-exceptional divisor $V(\rho)$ with $\rho\in\Delta'(1)\setminus\Delta(1)$, we have $\iota(V(\rho)) + a(V(\rho),\cF,D) = \phi_{(\cF,D)}(u)\geq 0$ by Proposition~\ref{discrepancy} and Lemma~\ref{non_neg_support_fcn} where $u$ is the primitive element in $N_\rho$. 
    Hence, $\on{ord}_{V(\rho)}D_Y = -a(V(\rho),\cF,D)\leq\iota(V(\rho))$. 
    Therefore, $(\pi^{-1}\cF,D_Y)$ is log canonical and so is $(\cF,D)$. 
\end{proof}

\subsection{Foliated log resolution}

\begin{definition}\label{fol_log_res_defn}
    A birational morphism $\pi\colon Y\to X$ is a \emph{foliated log resolution} of a foliated pair $(\cF,D)$ on a normal variety $X$ if $E:=\on{Exc}(\pi)$ is a divisor and the foliated pair $(\pi^{-1}\cF,\pi_*^{-1}D+E)$ is foliated log smooth.
\end{definition}

\begin{remark}
    By \cite[Theorem 2.1 and Proposition 4.4]{AK00}, if $\cF$ is algebraically integrable, any foliated pair $(\cF,\Delta)$ admits a foliated log resolution. 
    In Theorem~\ref{FLogRes}, we show that every toroidal foliated pair admits a foliated log resolution.
\end{remark}

\begin{proposition}\label{dagger_resolution}
    Let $\Sigma$ be a fan in $N\otimes\bR$ and $W\subseteq N\otimes\bK$ be a $\bK$-vector subspace where $\bK = \bR$ or $\bC$. 
    Then there is a simplicial fan $\Sigma'$ in $N\otimes\bR$ obtained from $\Sigma$ by performing a sequence of star subdivisions such that $(\Sigma',W)$ satisfies the condition $(\dagger)$. 
\end{proposition}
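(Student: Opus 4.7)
The plan is to reduce to a purely convex-geometric statement via Lemma~\ref{dagger_halfspace}. Set $L := \bR(W \cap N) \subseteq N_\bR$, which is a rational linear subspace since $W \cap N$ is a finitely generated $\bZ$-module. By Lemma~\ref{dagger_halfspace}, $(\Sigma, W)$ satisfies condition $(\dagger)$ if and only if $L \cap \sigma$ is a face of $\sigma$ for every $\sigma \in \Sigma$ (treating the trivial cases $L = \{0\}$ and $L = N_\bR$ separately, where $(\dagger)$ holds automatically). Moreover, by Lemma~\ref{lem_property_dagger}(1), condition $(\dagger)$ is preserved under any further refinement. Hence it suffices to construct any refinement $\Sigma_0$ of $\Sigma$ in which every cone meets $L$ in a face; then standard toric techniques for producing simplicial refinements via star subdivisions at rays (e.g.\ \cite[Proposition 11.1.7]{CLS11}) give the desired simplicial refinement $\Sigma'$.

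To construct $\Sigma_0$, I would choose rational linear functionals $\ell_1, \ldots, \ell_c \in M_\bQ$ such that $L = \bigcap_{i=1}^c \ker(\ell_i)$; these exist because $L$ is rational of codimension $c := n - \dim L$. I then successively refine $\Sigma$ by cutting along each hyperplane $H_i := \ker(\ell_i)$: at each stage, every cone $\sigma$ is replaced by the two rational polyhedral cones $\sigma \cap \{\ell_i \ge 0\}$ and $\sigma \cap \{\ell_i \le 0\}$, together with all their faces. One verifies that this produces a genuine fan refining the previous one (the two half-cones share the face $\sigma \cap H_i$, and compatibility across distinct cones of $\Sigma$ follows from $(\sigma \cap H_i^\pm) \cap (\tau \cap H_i^\pm) = (\sigma \cap \tau) \cap H_i^\pm$). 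After cutting along all $H_i$, every cone $\sigma'$ of the resulting fan $\Sigma_0$ lies entirely in a single closed sign chamber of the arrangement $\{H_i\}_{i=1}^c$, so each $\ell_i$ has constant sign on $\sigma'$ and its zero locus $\{v \in \sigma' : \ell_i(v) = 0\}$ is a face of $\sigma'$. Consequently $\sigma' \cap L = \bigcap_{i=1}^c \{v \in \sigma' : \ell_i(v) = 0\}$ is an intersection of faces, hence a face of $\sigma'$.

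The main technical obstacle is verifying that the hyperplane-cutting construction preserves the fan axioms at every stage; once that standard convex-geometric check is in hand, the rest is bookkeeping and an appeal to Lemma~\ref{lem_property_dagger}(1) to pass from $\Sigma_0$ to a simplicial $\Sigma'$. In particular, no new difficulty arises from having to simplicialize \emph{after} establishing $(\dagger)$, since simplicialization is achieved by further refinement only.
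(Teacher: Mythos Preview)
Your argument is correct. The reduction via Lemma~\ref{dagger_halfspace} to the purely real-rational statement ``$L\cap\sigma$ is a face of $\sigma$ for every $\sigma$'' is valid, the hyperplane-slicing construction is a standard way to achieve this, and the appeal to Lemma~\ref{lem_property_dagger}(1) to pass to a simplicial refinement afterwards is sound.

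However, your route is genuinely different from the paper's. The paper first simplicializes, and then proceeds by induction on the dimension $k$ of the cones: assuming $(\dagger)$ holds for all cones of dimension $<k$, it shows that for any $k$-dimensional simplicial cone $\sigma$ with $\on{Relint}(\sigma)\cap W\cap N\neq\emptyset$, either $\sigma\subseteq W$ already or $W\cap\sigma$ is a single rational ray in $\on{Relint}(\sigma)$; it then star-subdivides at precisely those rays. Your approach instead cuts $\Sigma$ by the hyperplanes defining $L=\bR(W\cap N)$, obtains $(\dagger)$ from Lemma~\ref{dagger_halfspace}, and only then simplicializes. Your argument is shorter and more conceptual; the paper's is more explicit about which rays are introduced. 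The latter point actually matters downstream: in the proof of Theorem~\ref{Fdlt_modification} the paper relies on the fact that every new ray produced by Proposition~\ref{dagger_resolution} lies in $W$ (since each is of the form $W\cap\sigma$), which is used to control log discrepancies. Your hyperplane-slicing construction introduces rays that lie in individual $H_i$ but not necessarily in $L\subseteq W$, so while it proves the proposition as stated, it would not plug directly into that later application without modification.
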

\begin{proof}
    By \cite[Exercise 11.1.10]{CLS11}, after performing a sequence of barycentric subdivisions, we have a simplicial fan refining $\Sigma$. 
    Thus we can assume that $\Sigma$ is simplicial. 
    Note that $\Sigma$ remains simplicial if we perform more star subdivisions.
    
    We will construct a sequence of fans $\Sigma_1,\ldots,\Sigma_n$ such that each $\Sigma_k$ is obtained by performing a sequence of star subdivisions starting from $\Sigma_{k-1}$, and $(\sigma,W(\sigma))$ is non-dicritical for all $\sigma\in\Sigma_k$ with $\dim\sigma\leq k$. It is clear that we can take $\Sigma_1=\Sigma$ as $(\rho,W(\rho))$ is non-dicritical for all $\rho\in\Delta(1)$. Suppose we have constructed $\Sigma_1,\ldots,\Sigma_{k-1}$ for some $k\geq 2$.
    
    \begin{claim}
        For any $\sigma\in\Sigma_{k-1}(k)$ with $
        \on{Relint}(\sigma)\cap W\cap N\neq\emptyset$, we have either $\sigma\subseteq W$ or there is a unique rational ray in $W$ that intersects $\on{Relint}(\sigma)$. 
    \end{claim}
    \begin{claimproof}
        Suppose there exist distinct rational rays $\rho_1$ and $\rho_2$ in $\sigma\cap W$, each of which is not contained in any proper face of $\sigma$.
        Then $(\bR\rho_1+\bR\rho_2)\cap(\sigma\setminus\on{Relint}(\sigma))$ consists of exactly two distinct rational rays in $W$, say $\rho'_1$, $\rho'_2$. 
        There exist proper faces $\tau_1$ and $\tau_2$ of $\sigma$ such that $\rho'_i\cap\on{Relint}(\tau_i)\neq\emptyset$ for $i=1$, $2$. 
        If there exists a proper face $\tau$ of $\sigma$ containing $\tau_1$ and $\tau_2$, then we have 
        $\rho_i\subseteq\on{Cone}(\rho'_1,\rho'_2)\subseteq\on{Cone}(\tau_1,\tau_2)\subseteq\tau$, which is absurd. 
        Hence $\langle \tau_1,\tau_2\rangle=\sigma$. 
        By assumption, we have $\tau_i\subseteq W$ for $i=1$, $2$ and thus $\sigma\subseteq W$. 
        This completes the proof of the claim. $\quad\blacksquare$
    \end{claimproof}
    
    Let $S_k$ be the set of rational rays $\rho\subseteq N\otimes\bR$ such that $\rho$ is the unique rational ray in $W$ that intersects $\on{Relint}(\sigma)$ for some $\sigma\in\Delta_{k-1}(k)$.
    Consider $\Sigma_k$ obtained by by performing a sequence of star subdivisions starting from $\Sigma_{k-1}$ along the rays in $S_k$ in any order. Then one can check that $(\sigma,W(\sigma))$ is non-dicritical for all $\sigma\in\Sigma_k$ with $\dim\sigma\leq k$. 
    Continuing the process, we get $\Sigma_n$ as required. 
\end{proof}

\begin{remark}\label{rem:dagger_for_conical_complex}
    Proposition~\ref{dagger_resolution} applies to extended complexes equally well.
\end{remark}

\begin{theorem}\label{FLogRes}
    Let $(\cF,D)$ be a toroidal foliated pair on a normal variety $X$.
    Then we have the following:
    \begin{enumerate}
        \item There is a birational morphism $\pi\colon Y \to X$ such that $\on{Exc}(\pi)$ is a divisor and $(\pi^{-1}\cF,\pi_*^{-1}D+\on{Exc}(\pi))$ is foliated log smooth.
        \item Moreover, we can make $Y$ smooth. 
        \item Suppose further that $(\cF,D)$ is a toric foliated pair on a toric variety $X$, then we can choose $\pi$ to be a toric morphism between toric varieties.
    \end{enumerate}
\end{theorem}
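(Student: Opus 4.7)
The plan is to apply Proposition~\ref{dagger_resolution} to refine the combinatorial data underlying the (toric or toroidal) structure so that the condition $(\dagger)$ holds, then further refine to achieve smoothness while preserving $(\dagger)$ via Lemma~\ref{lem_property_dagger}(1).

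For the toric case, let $(\cF_W,\Delta)$ be a toric foliated pair on $X_\Sigma$. I would first invoke Proposition~\ref{dagger_resolution} to produce a simplicial refinement $\Sigma_1$ of $\Sigma$ with $(\Sigma_1,W)$ satisfying $(\dagger)$. Applying the standard toric resolution of singularities (e.g.\ \cite[Theorem~11.1.9]{CLS11}) yields a smooth refinement $\Sigma_2$ of $\Sigma_1$, and $(\Sigma_2,W)$ still satisfies $(\dagger)$ by Lemma~\ref{lem_property_dagger}(1). The induced toric morphism $\pi\colon X_{\Sigma_2}\to X_\Sigma$ is birational, and $\pi_*^{-1}\Delta+\on{Exc}(\pi)$ is torus invariant. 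For any point $x\in X_{\Sigma_2}$ contained in the orbit $O_\tau$ of the unique minimal cone $\tau\in\Sigma_2$ with $x\in U_\tau$, the triple $(U_\tau,x,W)$ is a local model for $\pi^{-1}\cF_W$, and $\Sigma_\tau\subseteq\Sigma_2$ inherits $(\dagger)$. Hence $(\pi^{-1}\cF_W,\pi_*^{-1}\Delta+\on{Exc}(\pi))$ is foliated log smooth by Definition~\ref{LogSmAnyRank}.

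For the general toroidal case, I would first take a toroidal resolution $X'\to X$ of the underlying toroidal pair $(X,\on{Supp}(\Delta)\cup\Xi)$ so that $X'$ is smooth; the pullback foliated pair remains toroidal. At each closed point $x'\in X'$ with local model $(U_{\sigma_{x'}},p_{x'},W_{p_{x'}},\Delta_{p_{x'}})$, Proposition~\ref{dagger_resolution} provides a subdivision of $\Sigma_{\sigma_{x'}}$ enforcing $(\dagger)$ for $W_{p_{x'}}$. These local subdivisions are determined intrinsically by the foliation $\cF$ along the strata of the associated reduced divisor, so by the theory of toroidal morphisms (\cite{AK00}) they patch together to a global toroidal birational morphism $\pi\colon Y\to X'$. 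A further smooth toroidal refinement, which preserves $(\dagger)$ by Lemma~\ref{lem_property_dagger}(1), gives a smooth $Y$, and the same local-model argument as in the toric case shows $(\pi^{-1}\cF,\pi_*^{-1}\Delta+\on{Exc}(\pi))$ is foliated log smooth.

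The main obstacle is verifying that the local subdivisions at different points of the same stratum $S$ of the associated reduced divisor actually agree. Concretely, one must show that if $x,y$ lie in $S$, then the pairs $(\sigma_x,W_{p_x})$ and $(\sigma_y,W_{p_y})$ are identified under the canonical isomorphism of cones associated to $S$ in the conical polyhedral complex, so that Proposition~\ref{dagger_resolution} produces a well-defined subdivision of the cone of $S$. This should follow from the fact that $\cF$ is a global subsheaf of $\cT_X$ together with the rigidity of the formal coordinates adapted to the toroidal structure, but it is the step requiring the most delicate bookkeeping; everything else is a direct application of results already established in the paper.
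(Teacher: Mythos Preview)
Your toric case is correct and matches the paper exactly: Proposition~\ref{dagger_resolution} followed by \cite[Theorem~11.1.9]{CLS11}, with $(\dagger)$ preserved via Lemma~\ref{lem_property_dagger}(1).

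For the toroidal case, your overall strategy---resolve to smooth, then subdivide to enforce $(\dagger)$, then refine to smooth---is the same as the paper's, but the globalization step is different, and the obstacle you identify is a real gap in your route. Patching local subdivisions via the conical polyhedral complex of \cite{AK00} requires the local data $(\sigma_x,W_{p_x})$ to be compatible along strata, but $W_{p_x}$ is \emph{not} part of the polyhedral-complex data of the underlying toroidal embedding and can genuinely vary from point to point along a stratum. Since both the rays prescribed by Proposition~\ref{dagger_resolution} (namely $\rho=W\cap\sigma$) and the condition $(\dagger)$ itself depend on all of $W_p$, not just on $W_p\cap N$, your suggested fix via ``rigidity of formal coordinates'' does not go through as stated.

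The paper avoids this by a different globalization. Rather than subdividing the polyhedral complex, it isolates the dicritical locus $Z=\bigcup_i Z_i$ on the smooth model $\widetilde{X}$, applies Lemma~\ref{local_model_lem} at a \emph{general} point of each $Z_i$ to produce a semi-local model in which $Z_i$ is a torus-invariant stratum $V_\tau$, and then realizes the star subdivisions from Proposition~\ref{dagger_resolution} as a sequence of \emph{global weighted blow-ups} along such strata. Each ray introduced is rational, hence determines a global center and weight, so each step is a well-defined birational morphism on all of $\widetilde{X}$ without any compatibility check across points. Lemma~\ref{local_model_lem}, which you do not invoke, is the key device that converts the local combinatorics into global blow-up centers; this is what replaces your patching argument.
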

\begin{proof}
    Let $(\Delta,W)$ be the associated extended complex of $\cF$. 
    By Proposition~\ref{dagger_resolution} and Remark~\ref{rem:dagger_for_conical_complex}, there is a subdivision $\Delta'$ of $\Delta$ such that $(\Delta',W')$ satisfies the condition $(\dagger)$. 
    Let $\pi\colon X'\to X$ be the birational toroidal morphism associated to the subdivision (cf. Theorem~\ref{thm:toroidal_subdivision}). 
    Then $\pi^{-1}\cF$ is foliated log smooth and so is $(\pi^{-1}\cF,\pi_*^{-1}D+\on{Exc}(\pi))$. 

    Moreover, let $\Delta''$ be a subdivision of $\Delta'$ such that $\Delta''$ is smooth. 
    Then we get a birational morphism $\psi\colon X''\to X$ such that $X''$ is smooth and $\psi^{-1}\cF$ is foliated log smooth. 
\end{proof}

\begin{definition}\label{defn_F_dlt}
    A foliated pair $(\cF,D)$ on a normal variety $X$ is \emph{foliated divisorial log terminal (F-dlt)} if 
    \begin{enumerate}
        \item each irreducible component of $D$ is non-$\cF$-invariant and has a coefficient between $0$ and $1$, and 
        \item there exists a foliated log resolution $\pi\colon Y\to X$ of $(\cF,D)$ which only extracts divisors $E$ of discrepancy $> -\iota(E)$.
    \end{enumerate}
\end{definition}

Similar to \cite[Remark 3.7 and Lemma 3.8]{CS}, we have the following properties for F-dlt foliated pairs:
\begin{proposition}\label{F_dlt_property}
    Let $(\cF,D)$ be an F-dlt foliated pair on a normal variety $X$. 
    Then $(\cF,D)$ is log canonical. 
    Moreover, it is foliated log smooth at the generic point of any lc center of $(\cF,D)$. 
\end{proposition}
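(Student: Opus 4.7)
The plan is to use the F-dlt foliated log resolution $\pi \colon Y \to X$ supplied by Definition~\ref{defn_F_dlt} and study the foliated log smooth pair $(\pi^{-1}\cF, \Gamma_Y)$ on $Y$, where
\[
\Gamma_Y := \pi_*^{-1}\Delta - \sum_{E} a(E, \cF, \Delta)\, E
\]
is defined by the discrepancy identity $K_{\pi^{-1}\cF} + \Gamma_Y = \pi^*(K_\cF + \Delta)$ with the sum running over $\pi$-exceptional prime divisors.

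For log canonicity, the key observation is that Definition~\ref{LogSmAnyRank} depends only on the underlying toroidal embedding structure, not on the coefficients of the boundary, so $(\pi^{-1}\cF, \Gamma_Y)$ remains foliated log smooth since $\on{Supp}(\Gamma_Y) \subseteq \on{Supp}(\pi_*^{-1}\Delta) \cup \on{Exc}(\pi)$. It then remains to check the coefficient bound of Theorem~\ref{Flogsm_lc}: for each strict transform component $D$ of $\pi_*^{-1}\Delta$, F-dlt condition (1) gives $\iota(D) = 1$ and $\on{mult}_D \Delta \leq 1$; for each $\pi$-exceptional divisor $E$, F-dlt condition (2) gives $-a(E,\cF,\Delta) < \iota(E)$. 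Theorem~\ref{Flogsm_lc} then yields log canonicity of $(\pi^{-1}\cF, \Gamma_Y)$, and the discrepancy identity transfers log canonicity to $(\cF,\Delta)$.

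For the second assertion, let $W$ be an lc center and pick a divisor $F$ over $X$ with $c_X(F) = W$ and $a(F,\cF,\Delta) = -\iota(F)$; by the same identity $a(F, \pi^{-1}\cF, \Gamma_Y) = -\iota(F)$. I split into two cases. If $F$ is a divisor on $Y$, then by F-dlt condition (2) it cannot be $\pi$-exceptional, so $W$ is a prime divisor of $X$, $\pi$ is an isomorphism over a dense open of $W$, and foliated log smoothness of $(\cF,\Delta)$ near the generic point of $W$ is inherited from $(\pi^{-1}\cF, \Gamma_Y)$. If $F$ is exceptional over $Y$, I use Lemma~\ref{local_model_lem} to pick a semi-local model $(U_\sigma, p, W_p, \Delta_{Y,p})$ of $(\pi^{-1}\cF, \Gamma_Y)$ near a general point of $c_Y(F)$ satisfying $(\dagger)$, in which $F$ corresponds to a primitive vector $v \in N$. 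Proposition~\ref{discrepancy}, together with $\phi_{(K_{\cF_{W_p}} + \Delta_{Y,p})}(v) = 0$ and the nonnegativity of this support function on $\sigma$ coming from Lemma~\ref{non_neg_support_fcn}, forces $v$ to lie in a face $\tau \preceq \sigma$ on which the support function vanishes identically. Hence each ray of $\tau$ is contained in $W_p$ with associated coefficient of $\Delta_{Y,p}$ equal to $1$, and $V_\tau$ is a stratum whose image under the toroidal contraction $\pi$ (built in Theorem~\ref{FLogRes} as a sequence of weighted blow-ups) is precisely a neighborhood of the generic point of $W$; the induced local toric model at $W$ on $X$ then still satisfies $(\dagger)$, yielding foliated log smoothness at the generic point of $W$.

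The main obstacle is this last case: descending the foliated log smoothness of $(\pi^{-1}\cF, \Gamma_Y)$ along the contraction $\pi$ from a neighborhood of $c_Y(F)$ to a neighborhood of the generic point of $W$. This requires a careful combinatorial argument tracking how the face $\tau \preceq \sigma$ on which the support function vanishes corresponds to the cone of the desired local model on $X$, and verifying that condition $(\dagger)$ is preserved under this toroidal descent.
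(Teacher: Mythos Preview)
Your log canonicity argument is correct and essentially coincides with the paper's: apply Theorem~\ref{Flogsm_lc} to the pair on $Y$ with boundary $\Gamma_Y$, whose coefficients are bounded by $\iota$ thanks to F-dlt conditions (1) and (2).

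For the second assertion, Case~1 is fine, but Case~2 has two genuine gaps. First, an arbitrary exceptional divisor $F$ over $Y$ does not correspond to a primitive lattice vector in a semi-local model; only divisors obtained by a single star subdivision do, so Proposition~\ref{discrepancy} does not compute $a(F,\pi^{-1}\cF,\Gamma_Y)$ in general. Second, and more fundamentally, your descent step presupposes that $(\cF,\Delta)$ admits a local toric model near $\eta_W$ on $X$, but being toroidal there is exactly part of what you must prove; invoking Theorem~\ref{FLogRes} to describe $\pi$ as a sequence of weighted blow-ups is circular, since that theorem assumes $(\cF,\Delta)$ is already toroidal on $X$, whereas here only the resolved pair on $Y$ is known to be so.

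The paper bypasses all of this with a short contradiction. Since the resolved pair is foliated log smooth everywhere on $Y$, failure of foliated log smoothness at $\eta_W$ forces $\pi$ to be non-isomorphic over $\eta_W$, hence $c_Y(T)\subseteq E$ for some $\pi$-exceptional prime $E$. Writing $K_{\cG}+\Gamma+F=\pi^*(K_\cF+\Delta)+G$ with $\cG=\pi^{-1}\cF$, $\Gamma=\pi_*^{-1}\Delta$, and $F,G\geq 0$ exceptional without common components, the strict inequality $a(E,\cF,\Delta)>-\iota(E)$ from F-dlt~(2) forces either $E\subseteq\on{Supp}(G)$ (when $E$ is $\cG$-invariant) or $\on{mult}_E F<1$ (when $E$ is non-invariant). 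In the first case $a(T,\cG,\Gamma+F)<a(T,\cF,\Delta)=-\iota(T)$ directly; in the second, bumping $F$ to $F'=F+\delta E$ for small $\delta>0$ keeps $(\cG,\Gamma+F')$ within the hypotheses of Theorem~\ref{Flogsm_lc} while making $a(T,\cG,\Gamma+F')<-\iota(T)$. Either way one contradicts log canonicity of a foliated log smooth pair on $Y$. This perturbation trick is the missing idea in your Case~2.
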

\begin{proof}
    The proof follows the same arguments as in the one of \cite[Remark 3.7 and Lemma 3.8]{CS}. 

    As $(\cF,D)$ is F-dlt, there is a foliated log resolution $\pi\colon Y\to X$ which only extracts divisors $E$ of discrepancy $>-\iota(E)$. 
    Let $\cG:=\pi^{-1}\cF$ and $\Gamma:=\pi_*^{-1}D$. 
    Then we may write $K_\cG+\Gamma+F = \pi^*(K_\cF+D)+G$ where $F$, $G$ are $\pi$-exceptional effective divisors without common components. 
    Note that no component of $\Gamma+F$ is $\cG$-invariant. 
    By Theorem~\ref{Flogsm_lc}, $(\cG,\Gamma+F)$ is log canonical. 
    Thus, for any divisor $E$ over $X$, we have 
    \[a(E,\cF,D)\geq a(E,\cG,\Gamma+F)\geq -\iota(E).\]
    Therefore, $(\cF,D)$ is log canonical. 

    Now let $Z$ be an lc center of $(\cF,D)$. 
    Then there is a divisor $T$ whose center on $X$ is $Z$ and with discrepancy $a(T,\cF,D)=-\iota(T)$. 
    For the sake of contradiction, we assume that $(\cF,D)$ is not foliated log smooth at the generic point of $Z$. 
    So $\pi$ is not an isomorphism at the generic point of $\pi^{-1}(Z)$. 
    Since $\on{Exc}(\pi)$ is a divisor, there exists a $\pi$-exceptional prime divisor $E$ which contains the center of $T$ on $Y$. 
    
    If $E$ is $\cG$-invariant, then $E$ is contained in the support of $G$ and thus, $a(T,\cG,\Gamma+F) < a(T,\cF,D)=-\iota(T)$, which contradicts the log canonicity of $(\cG,\Gamma+F)$. 

    If $E$ is non-$\cG$-invariant, then $E$ is contained in the support of $F$. 
    Note that $\lfloor F\rfloor=0$. 
    Then there is a $\delta>0$ such that if $F':= F+\delta E$, then we have $\lfloor F+\delta E\rfloor=0$, $(\cG,\Gamma+F')$ is foliated log smooth, and $a(T,\cG,\Gamma+F')<a(T,\cG,\Gamma+F)\leq a(T,\cF,D)=-\iota(T)$, which is impossible as $(\cG,\Gamma+F')$ is log canonical by Theorem~\ref{Flogsm_lc}. 
\end{proof}

\subsection{Toric description for various singularities}
In this subsection, we provide toric descriptions for many singularities and show the relations among them.

The following Proposition generalizes \cite[Lemma 3.1]{ambro2021positivity}. 
\begin{proposition}\label{lc}
    Let $(\cF,D=\sum_id_iD_i)$ be a toroidal foliated pair on a normal variety $X$. 
    Then $(\cF,D)$ is log canonical if and only if $d_i\leq\iota(D_i)$ for all $i$. 
    In particular, a toric foliated pair $(\cF_W,D=\sum_{\rho\in\Sigma(1)} d_\rho D_\rho)$ on a toric variety $X_\Sigma$ of a fan $\Sigma$ in $N\otimes\bR$ is log canonical if and only if $d_\rho\leq 1$ for $\rho\subseteq W$ and $d_\rho\leq 0$ for $\rho\nsubseteq W$. 
\end{proposition}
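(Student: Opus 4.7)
The forward direction is immediate by taking $E=D_i$ in the definition of log canonical: this yields $-d_i = a(D_i,\cF,\Delta) \geq -\iota(D_i)$, i.e.\ $d_i\leq \iota(D_i)$.

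For the converse, assume $d_i\leq \iota(D_i)$ for all $i$. My plan is to take a foliated log resolution $\pi\colon Y\to X$ from Theorem~\ref{FLogRes}, factored as a sequence of blow-ups $\pi_k\circ\cdots\circ\pi_1$, and to define $\widetilde\Delta_i$ on $X_i$ by $K_{\cF_i}+\widetilde\Delta_i=(\pi_i\circ\cdots\circ\pi_1)^*(K_\cF+\Delta)$, so that the coefficients of $\widetilde\Delta_k$ are $d_i$ on the strict transforms of $D_i$ and $-a(E,\cF,\Delta)$ on the $\pi$-exceptional divisors $E$. I will show that all of these coefficients are bounded above by $\iota$ on the respective component. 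Granted this, $(\pi^{-1}\cF,\widetilde\Delta_k)$ is a foliated log smooth pair with coefficients $\leq\iota$, hence log canonical by Theorem~\ref{Flogsm_lc}; using the crepant identity $K_{\pi^{-1}\cF}+\widetilde\Delta_k=\pi^*(K_\cF+\Delta)$ then gives $a(F,\cF,\Delta)\geq -\iota(F)$ for every divisor $F$ over $X$.

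The coefficient bound is established by induction on $i$. Suppose $(\cF_{i-1},\widetilde\Delta_{i-1})$ is toroidal with coefficients $\leq\iota$. Around a general point of the center $Z_i$ of $\pi_i$, Lemma~\ref{local_model_lem} supplies a semi-local model $(U_\sigma,p,W_p,\Delta_p)$ in which $Z_i$ is identified (formally locally) with $V_\tau$ for some face $\tau\preceq\sigma$ and $\pi_i$ becomes a toric star subdivision at the primitive generator $u$ of the new ray. Proposition~\ref{discrepancy} then gives
\[
\iota(E_i)+a(E_i,\cF_{i-1},\widetilde\Delta_{i-1}) \;=\; \phi_{(K_{\cF_{W_p}}+\Delta_p)}(u),
\]
and the inductive hypothesis, combined with Lemma~\ref{non_neg_support_fcn}, forces the right-hand side to be non-negative. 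Hence $-a(E_i,\cF_{i-1},\widetilde\Delta_{i-1})\leq\iota(E_i)$, and Lemma~\ref{disc_fol_stratum} ensures that $(\cF_i,\widetilde\Delta_i)$ is again toroidal, completing the induction. The toric specialization is immediate from Corollary~\ref{basics}, which gives $\iota(D_\rho)=1$ if $\rho\subseteq W$ and $0$ otherwise.

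The main obstacle I anticipate is matching each elementary blow-up $\pi_i$ with a toric star subdivision on the semi-local model so that Proposition~\ref{discrepancy} applies verbatim. I would handle this by observing that the discrepancy along $E_i$ depends only on the behaviour at the generic point of $E_i$, so one may pass to the formal completion at a general point of $Z_i$; on that completion, $\pi_i$ agrees with the blow-up of the torus-invariant subscheme $V_\tau\subseteq U_\sigma$, which is precisely the toric star subdivision produced in the inductive construction of the foliated log resolution in Theorem~\ref{FLogRes}. The rest is a bookkeeping of coefficients and an application of Lemma~\ref{non_neg_support_fcn}.
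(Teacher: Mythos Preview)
Your converse is essentially the paper's proof, organized as an explicit induction along the sequence of blow-ups rather than a single appeal to the construction in Theorem~\ref{FLogRes}; the key step in both is the identity $\iota(E)+a(E,\cF,\Delta)=\phi_{(K_{\cF_{W_p}}+\Delta_p)}(u)$ from Proposition~\ref{discrepancy} together with Lemma~\ref{non_neg_support_fcn}, followed by Theorem~\ref{Flogsm_lc}.

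The forward direction, however, has a genuine gap. In this paper the log canonical condition is imposed only on \emph{$\pi$-exceptional} divisors $E$ (see the definition at the start of Section~4), so ``taking $E=D_i$'' is not admissible: a component $D_i$ of $\Delta$ is a divisor on $X$ itself and is not exceptional for any birational morphism $\pi\colon\widetilde X\to X$. One therefore cannot read off $-d_i\geq -\iota(D_i)$ directly from the definition. The paper instead invokes Proposition~\ref{not_lc_prop}(1), whose proof is a genuine construction: when $d_i>\iota(D_i)$ one blows up at (or along) suitable centers on $D_i$ and tracks the discrepancies through a tower of blow-ups to exhibit exceptional divisors with discrepancy tending to $-\infty$. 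You should either cite Proposition~\ref{not_lc_prop} or reproduce that argument; as written, the forward implication is not established.
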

\begin{proof}
    Arguing as in the proof of Theorem~\ref{Flogsm_lc}, we have $(\cF,D)$ is not log canonical if $d_i>\iota(D_i)$ for some $i$. 

    Conversely, suppose $d_i\leq \iota(D_i)$ for all $i$. 
    Let $(\Delta,W)$ be the associated extended complex of $\cF$. 
    By Theorem~\ref{FLogRes}, there is a birational morphism $\pi\colon Y\to X$ such that $Y$ is smooth and $(\pi^{-1}\cF,\pi_*^{-1}\Delta+\on{Exc}(\pi))$ is foliated log smooth. 
    Let $D'=\pi^*(K_\cF+D)-K_{\pi^{-1}\cF}$. 
    By Lemma~\ref{non_neg_support_fcn} and Proposition~\ref{discrepancy}, the support function $\phi_{(\pi^{-1}\cF,D')} = \phi_{(\cF,D)}$ is non-negative. 
    Moreover, for any $\pi$-exceptional prime divisor $E$, the log discrepancy $\iota(E)+a(E,\cF,D) = \phi_{(\cF,D)}(u)\geq 0$ where $u$ is a primitive element. 
    Thus, $\on{mult}_E D'=-a(E,\cF,D)\leq\iota(E)$. 
    Therefore, by Theorem~\ref{Flogsm_lc}, $(\pi^{-1}\cF,D')$ is log canonical and so is $(\cF,D)$. 
\end{proof}

For any vector subspace $W\subseteq N \otimes\bR$ and a full-dimensional cone $\sigma\subseteq N \otimes\bR$ where $N$ is a lattice, we define 
\begin{align*}
    I_{\sigma,\,W} &=\{\rho\mid \rho\in\sigma(1)\textnormal{ and } \rho\subseteq W\}\textnormal{ and} \\
    \Pi_{\sigma,\,W} &= \on{Conv}(0, v_\rho\mid\rho\in I_{\sigma,\,W}) + \on{Cone}(v_\rho\mid\rho\in\sigma(1)\setminus I_{\sigma,\,W})
\end{align*}
where $v_\rho$ is the primitive element of $\rho\cap N$, the sum is the Minkowski sum, and the first summand is the convex hull of the set containing $0$ and $v_\rho$ for $\rho\in I_{\sigma,\,W}$.

\begin{proposition}\label{can_term_toric}
    Let $\cF$ be a toroidal foliation on a normal variety $X$ with associated extended complex $(\Delta,W)$, such that $K_\cF$ is $\bR$-Cartier (or equivalently, $(\cF,0)$ is a toroidal foliated pair). 
    Then we have the following:
    \begin{enumerate}
        \item For each $\sigma\in\Delta$, $\Pi_{\sigma,\,W(\sigma)}$ has a unique facet ($=$codimension one face) $F_\sigma$ that does not contain the origin. 
        \item $\cF$ is canonical if and only if for any $\sigma\in\Delta$, all the non-zero elements of $\Pi_{\sigma,\,W(\sigma)}\cap W(\sigma)\cap N_\sigma$ are contained in $F_\sigma$.
        \item For any $\sigma\in\Delta$, $\cF$ is terminal at the generic point of $V(\sigma)$ if and only if $\Pi_{\sigma,\,W(\sigma)}\neq\sigma$ and $\on{Relint}(\sigma)\cap\Pi_{\sigma,\,W(\sigma)}\cap W(\sigma)\cap N_\sigma=\emptyset$. 
    \end{enumerate}
\end{proposition}
\begin{proof}
    For (1), as $K_{\cF}$ is $\bR$-Cartier, the support function $\phi_{(\cF,0)}$ is linear on each $\sigma\in\Delta$. 
    Hence, there exists $m_\sigma\in \on{Hom}(N_\sigma,\bZ)\otimes\bR$ such that $\phi_{(\cF,0)}(u) = \langle m_\sigma, u\rangle$ for $u\in\sigma$. 

    Now we fix a cone $\sigma\in\Sigma$. 
    By \cite[Lemma 7.1.1]{CLS11}, all vertices of $\Pi_{\sigma,\,W(\sigma)}$ are contained in $\{0,v_\rho\mid\rho\in I_{\sigma,\,W(\sigma)}\}$. 
    Note that we have $\langle m_\sigma, v_\rho\rangle = \phi_{(\cF,0)}(v_\rho) = 1$ for $\rho\in I_{\sigma,\,W}$ and $\langle m_\sigma, v_\rho\rangle = \phi_{(\cF,0)}(v_\rho) = 0$ for $\rho\in\sigma(1)\setminus I_{\sigma,\,W(\sigma)}$. 
    So 
    \begin{align*}
        F_\sigma &:= \Pi_{\sigma,\,W(\sigma)}\cap\{u\mid\langle m_\sigma, u\rangle=1\} \\
         &= \on{Conv}(v_\rho\mid\rho\in I_{\sigma,\,W(\sigma)}) + \on{Cone}(v_\rho\mid\rho\in\sigma(1)\setminus I_{\sigma,\,W(\sigma)})
    \end{align*}
    is a facet of $\Pi_{\sigma,\,W(\sigma)}$, which contains all the vertices of $\Pi_{\sigma,\,W(\sigma)}$ except the origin. 
    Hence, any facet of $\Pi_{\sigma,\,W(\sigma)}$ other than $F_\sigma$ must contain the origin, and (1) follows. 

    For (2), suppose $\cF$ is canonical. 
    Let $u$ be an element in $\Pi_{\sigma,\,W(\sigma)}\cap W(\sigma)\cap N_\sigma$. Write $u=nu'$ where $n\in \bN$ and $u'$ is primitive. 
    Let $\Delta'=\Delta^*(u')$ the star subdivision at $u'$ and $\rho_u$ be the ray generated by $u$. 
    Then we have a birational morphism $X'\to X$ with the exceptional divisor $E=V(\rho_u)$. 
    Thus, the discrepancy $0\leq a(V(\rho_u),\cF) = \phi_{(\cF,0)}(u')-\iota(V(\rho_u))\leq \frac{1}{n}-1$. 
    Hence, $n=1$, $u=u'$, and $\phi_{(\cF,0)}(u)=1$. 
    We thus have $u\in F_\sigma$. 
        
    Conversely, assume that for any $\sigma\in\Delta$, all the non-zero elements of $\Pi_{\sigma,\,W(\sigma)}\cap W(\sigma)\cap N_\sigma$ are contained in $F_\sigma$. 
    By Theorem~\ref{FLogRes}, there is a birational morphism $\pi\colon X'\to X$ induced by a subdivision $\Delta'$ of $\Delta$ such that $X'$ is smooth and $\pi^{-1}\cF$ is foliated log smooth. 
    Let $D':=\pi^*K_\cF - K_{\pi^{-1}\cF}$. 
    Note that $D'$ is supported on $V(\rho)$ where $\rho\in\Delta'(1)\setminus\Delta(1)$. 
    Moreover, $-D'$ is effective as $\on{mult}_{V(\rho)}D' = -a(V(\rho),\cF) = -(\phi_{(\cF,0)}(v_\rho)-\iota(V(\rho)))\leq 0$ by assumption. This also shows that each of the $\pi$-exceptional divisor has non-negative discrepancy.
    Since $\pi^{-1}\cF$ is foliated log smooth on a smooth variety, it is canonical by Theorem~\ref{SimpleCanonical}. 
    As $-D'$ is effective, $(\pi^{-1}\cF,D')$ is also canonical. 
    Therefore, $\cF$ is canonical. 

    For (3), suppose on the contrary that either
    \begin{enumerate}
        \item $\Pi_{\sigma,\,W(\sigma)}=\sigma$ (hence $I_{\sigma,\,W(\sigma)}=\emptyset$), or 
        \item $\on{Relint}(\sigma)\cap\Pi_{\sigma,\,W(\sigma)}\cap W(\sigma)\cap N_\sigma\neq\emptyset$. 
    \end{enumerate}
    In the first case, let $u\in \on{Relint}(\sigma)\cap N_\sigma$ be a primitive element in $N_\sigma$; in the second case, let $u\in\on{Relint}(\sigma)\cap\Pi_{\sigma,\,W(\sigma)}\cap W(\sigma)\cap N_\sigma$ be a primitive element in $N_\sigma$. 
    Let $\Delta'=\Delta^*(u)$ be the star subdivision of $\Delta$ at $u$ and $\rho_u=\bR_{\geq 0}u$ be the ray generated by $u$. 
    Then we have a birational morphism $X'\to X$ with the exceptional divisor $E=V(\rho_u)$ whose center on $X$ is $V(\sigma)$. The discrepancy $a(E,\cF)$ is $\phi_{(\cF,0)}(u)-\iota(V(\rho_u))\leq 0$ in the first case, and $\leq 1-1=0$ in the second case, both of which contradict the assumption that $\cF$ is terminal at the generic point of $V(\sigma)$. 
    Therefore, we have $\Pi_{\sigma,\,W(\sigma)}\neq\sigma$ and $\on{Relint}(\sigma)\cap\Pi_{\sigma,\,W(\sigma)}\cap W(\sigma)\cap N_\sigma=\emptyset$. 

    Conversely, suppose $\Pi_{\sigma,\,W(\sigma)}\neq\sigma$ and $\on{Relint}(\sigma)\cap\Pi_{\sigma,\,W(\sigma)}\cap W(\sigma)\cap N_\sigma=\emptyset$. 
    By Theorem~\ref{FLogRes}, there is a subdivision $\Delta'$ of $\Delta$ inducing a birational morphism $\pi\colon X'\to X$ such that $X'$ is smooth and $\pi^{-1}\cF$ is foliated log smooth.  
    Let $D' = \pi^*K_{\cF} - K_{\pi^{-1}\cF}$. 
    \begin{claim}
        $-D'$ is effective and $\on{Supp}D'$ is the union of all $\pi$-exceptional divisors after base change via $\eta\to X$, where $\eta$ is the generic point of $V(\sigma)$. 
    \end{claim}
    \begin{claimproof}
        Since $\Pi_{\sigma,\,W(\sigma)}\neq\sigma$, there is a ray $\rho\in\sigma(1)$ such that $\rho\subseteq W(\sigma)$ and thus, $\phi_{K_{\cF}}$ is positive on $\on{Relint}(\sigma)$. After base change, every $\pi$-exceptional divisor is of the form $V(\rho)$ where $\rho\in\Delta'(1)\setminus\Delta(1)$ and $\rho\in\on{Relint}(\sigma)$. Let $v_\rho$ be the primitive element of $\rho$ in $N_\rho$. 
        If $\iota(V(\rho))=0$, then $\phi_{(\cF,0)}(v_\rho)>0=\iota(V(\rho))$. 
        If $\iota(V(\rho))=1$, then $v_\rho\in W(\sigma)$ and thus, $v_\rho\notin\Pi_{\sigma,\,W(\sigma)}$ as $\on{Relint}(\sigma)\cap\Pi_{\sigma,\,W(\sigma)}\cap W(\sigma)\cap N_\sigma=\emptyset$. 
        Hence, $\phi_{(\cF,0)}(v_\rho)>1=\iota(V(\rho))$. 
        Therefore, in both cases, we have 
        \[\on{mult}_{V(\rho)}D' = -a(V(\rho),\cF) = -(\phi_{(\cF,0)}(v_\rho) - \iota(V(\rho)))< 0.\] 
        This completes the proof of the claim. $\quad\blacksquare$
    \end{claimproof}
        
    Let $E$ be a divisor exceptional over $X$ with center $V(\sigma)$. 
    If $E$ is a divisor on $X'$, then $a(E,\cF)=-\on{mult}_E D'>0$. 
    If $E$ is not a divisor on $X'$, then 
    \[a(E,\cF)=a(E,\pi^{-1}\cF,D')> a(E,\pi^{-1}\cF)\geq 0\] 
    where the equality follows from $\pi^*K_{\cF} = K_{\pi^{-1}\cF}+D'$, the last inequality holds true by Proposition~\ref{SimpleCanonical}, and the first inequality follows since $-D'$ is effective and the center of $E$ on $X'$ is contained in $\on{Supp}D'$, the union of all the $\pi$-exceptional divisors. 
\end{proof}

\begin{proposition}\label{Fdlt_prop}
    Let $(\cF,D)$ be a toroidal foliated pair on a normal variety $X$ with associated extended complex $(\Delta,W)$. 
    Suppose $D$ is effective. 
    Then $(\cF,D)$ is F-dlt if and only if the following statements hold true:
    \begin{enumerate}
        \item $\on{Supp}(D)\subseteq\bigcup_{\rho\in\Delta(1),\,\rho\subseteq W(\rho)}V(\rho)$ and $0\leq\on{mult}_{V(\rho)}D\leq 1$ for any $\rho\in\Delta(1)$. 
        \item For any $\sigma\in\Delta$ satisfying $\phi_{(\cF,D)}\vert_\sigma=0$, we have $\sigma$ is simplicial and $(\sigma,W(\sigma))$ is non-dicritical. 
    \end{enumerate}
\end{proposition}
\begin{proof}
    Note that the condition (1) is equivalent to Definition~\ref{defn_F_dlt}(1). 

    Suppose $(\cF,D)$ is F-dlt. 
    Then there is a foliated log resolution $\pi\colon Y\to X$ such that $a(E,\cF,D)>-\iota(E)$ for any $\pi$-exceptional divisor $E$. 
    For any cone $\sigma\in\Delta$ satisfying $\phi_{(K_{\cF}+D)}\vert_\sigma=0$, we have that $V(\sigma)$ is an lc center of $(\cF,D)$. 
    By Proposition~\ref{F_dlt_property}, $(\cF,D)$ is foliated log smooth at the generic point of $V(\sigma)$. 
    Then $\sigma$ is simplicial and $(\sigma,W(\sigma))$ is non-dicritical. 

    Conversely, suppose that if $\sigma\in\Delta$ and $\phi_{(\cF,D)}\vert_\sigma=0$, then $\sigma$ is simplicial and $(\sigma,W(\sigma))$ is non-dicritical. 
    \begin{claim}
        $(\Delta,W)$ satisfies the condition $(\dagger)$.
    \end{claim}
    \begin{claimproof}
    Fix a cone $\sigma\in\Delta$. 
    If $\dim\sigma=1$, then $(\sigma,W(\sigma))$ is automatically non-dicritical. 
    So we may assume $\dim\sigma\geq 2$. 
    
    As $\phi_{(\cF,D)}\vert_\sigma$ is linear, we may write $\phi_{(\cF,D)}\vert_\sigma(u) = \langle m, u\rangle$ for some non-zero $m\in \on{Hom}(N_\sigma,\bZ)\otimes\bR$ and for $u\in \sigma$. 
    Note that $\sigma_0:=\{u\in\sigma\mid\langle m,u\rangle=0\}$ is a face of $\sigma$ as $\langle m,u\rangle\geq 0$ for all $u\in\sigma$. 
    As $\phi_{(\cF,D)}\vert_{\sigma_0}=0$, we have $\sigma_0$ is simplicial and $(\sigma_0,W(\sigma_0))$ is non-dicritical by assumption. 
    
    Let $v_1,\ldots,v_s\in N_\sigma$ be a minimal set of primitive generators for $\sigma$ and $\sigma_0=\on{Cone}(v_1,\ldots,v_\ell)$ with $\ell\leq s$. 
    Let $\rho_i$ be the ray generated by $v_i$. 
    Note that for all $i\geq \ell+1$, we have $0<\langle m,v_i\rangle = \phi_{(\cF,D)}(v_i)\leq \iota(V(\rho_i))$, $\iota(V(\rho_i))=1$, and thus $v_i\in W(\sigma)$. 

    We now suppose $\on{Relint}(\sigma)\cap W(\sigma)\cap N_\sigma\neq\emptyset$ and will show $\sigma\subseteq W(\sigma)$.  Let $v\in\on{Relint}(\sigma)\cap W(\sigma)\cap N_\sigma$ be a primitive element. 
    We may write $v=\sum_{i=1}^s a_iv_i$ so that all $a_i$ are positive rational numbers. 
    Let \[v':=\sum_{i=1}^{\ell} a_iv_i = v-\sum_{i=\ell+1}^s a_iv_i\in\sigma_0\cap W(\sigma_0).\] 
    Then we have $nv'\in\on{Relint}(\sigma_0)\cap W(\sigma_0)\cap N_{\sigma_0}$ for some $n\in\bN$. As $(\sigma_0,W(\sigma_0))$ is non-dicritical, we have $\sigma_0\subseteq W(\sigma_0)$ because of the existence of $nv'$. That is, $v_i\in W(\sigma)$ for all $1\leq i\leq \ell$, as required. $\quad\blacksquare$
    \end{claimproof}
    
    We will construct a foliated log resolution of $(\cF,D)$ which only extracts divisor $E$ of discrepancy $>-\iota(E)$. 
    
    By \cite[Exercise 11.1.10]{CLS11}, there is a simplicial conical complex $\beta(\Delta)$ which is a subdivision of $\Delta$. 
    We slightly modify the construction of $\beta(\Delta)$ as follows:
    Let $S=\{\sigma\in\Delta\mid \sigma \mbox{ is not simplicial}\}$. 
    We order the cone in $S$ as $\sigma_1,\ldots,\sigma_\ell$ so that $\dim\sigma_1\leq\cdots\leq\dim\sigma_\ell$. 
    Let $v_{\sigma_i}$ be the primitive element of $\bR_{\geq 0}(\sum_{\rho\in\sigma_i(1)}v_\rho)\cap N_{\sigma_i}$. 
    Then $\beta(\Delta)$ is obtained from $\Delta$ by performing the sequence of star subdivisions at $v_{\sigma_1},\ldots,v_{\sigma_\ell}$.

    Let $\pi\colon Y\to X$ be the birational morphism induced by the subdivision $\beta(\Delta)$. Then $(\pi^{-1}\cF,\pi^{-1}_*D+\on{Exc}(\pi))$ is foliated log smooth as $Y$ is $\bQ$-factorial, and the associated extended complex $(\beta(\Delta),W')$ of $\pi^{-1}\cF$ satisfies the condition $(\dagger)$ by Lemma~\ref{lem_property_dagger}(2). Suppose $\rho\in\beta(\Delta)(1)\setminus\Delta(1)$, and let $\tau\in\Delta$ be the minimal cone that contains $\rho$. Then by construction, we must have $\phi_{(\cF,D)}\vert_{\tau}\neq 0$, and therefore, $a(V(\rho),\cF,D)+\iota(V(\rho))=\phi_{(\cF,D)}(v_\rho)>0$, since $v_\rho\in\on{Relint}(\tau)$ and $\phi_{(\cF,D)}\vert_{\on{Relint}(\tau)}> 0$. That is, every $\pi$-exceptional divisor $E$ has discrepancy $>-\iota(E)$. 
    \end{proof}

\begin{proposition}\label{Fdlt_ND_prop}
    Let $(\cF,D)$ be a toroidal foliated pair on a normal variety $X$ with associated extended complex $(\Delta,W)$. 
    If $(\cF,D)$ is F-dlt, then $\cF$ is non-dicritical. 
\end{proposition}
\begin{proof}
    This is the content of the claim in the proof of Proposition~\ref{Fdlt_prop}.
\end{proof}

\begin{proposition}\label{CanImplyND}
    Let $\cF$ be a toroidal foliation on a normal variety $X$ with associated extended complex $(\Delta,W)$, such that $K_\cF$ is $\bR$-Cartier. 
    If $\cF$ is canonical, then it is non-dicritical. 
\end{proposition}
\begin{proof}
    Suppose $\sigma\in\Delta$, and $v\in\on{Relint}(\sigma)\cap N_\sigma\cap W(\sigma)$. We are going to show that $\sigma\subseteq W(\sigma)$. Let $v_1,\ldots,v_s$ be the primitive generators of rays in $\sigma(1)$. Then we may assume that $v_i\in W(\sigma)$ if and only if $1\leq i\leq \ell$ for some $\ell\leq s$, and write $v=\sum_{i=1}^s a_iv_i$ where $a_i$ are all positive rational numbers. Let
    \[
        v':=\sum_{i=\ell+1}^sa_iv_i=v-\sum_{i=1}^\ell a_iv_i\in W(\sigma).
    \]
    Then $nv'\in \Pi_{\sigma,\,W(\sigma)} = \on{Conv}(0,v_i\mid i\leq\ell)+\on{Cone}(v_i\mid i>\ell)$, and hence, $nv'\in\Pi_{\sigma,W(\sigma)}\cap W(\sigma)\cap N_\sigma$ for some $n\in\bN$. On the other hand, since $\phi_{k_\cF}(nv')=0$, we have $nv'\not\in F_\sigma=\Pi_{\sigma,W(\sigma)}\cap\{u\mid\phi_{K_\cF}(u)=1\}$. By Proposition~\ref{can_term_toric}, we must have $nv'=0$, and hence, $\ell=s$ as required.
\end{proof}

The following proposition is analogous to \cite[Theorem 0.2]{spicer2022local} and \cite[Lemma 2.9]{cascini2025mmp} with generalization to toroidal cases. 
\begin{proposition}
     Let $\cF$ be a toroidal foliation on a $\bQ$-factorial normal variety $X$ with associated extended complex $(\Delta,W)$, such that $K_\cF$ is $\bR$-Cartier. 
    \begin{enumerate}
        \item If $\cF$ is terminal, then it is smooth in codimension $2$, that is the singular locus of $\cF$ has codimension at least $3$. 
        \item Suppose $\on{rank}(\cF)=1$. 
        If $\cF$ is terminal at the generic point of $V(\sigma)$ for some cone $\sigma\in\Delta$, then $V(\sigma)\nsubseteq\on{Sing}(\cF)$. 
    \end{enumerate}
\end{proposition}
\begin{proof}
    For (1), let us consider any cone $\sigma=\on{Cone}(u_1,u_2)\in\Delta(2)$ where $u_1$, $u_2$ are primitive. 
    Since $\cF$ is terminal at the generic point of $V(\sigma)$, we have $\Pi_{\sigma,\,W(\sigma)}\neq\sigma$ by Proposition~\ref{can_term_toric}(3). 
    Consequently, one of $u_1$ and $u_2$ is contained in $W(\sigma)$. 
    Therefore, we have the following two cases:
    \begin{itemize}
        \item If both of them are contained in $W(\sigma)$, then $\sigma\subseteq W(\sigma)$ and thus, $V(\sigma)$ is not contained in the singular locus of $\cF$ by Proposition~\ref{singularlocus}. 
        \item If only one of them is contained in $W(\sigma)$, let us say $u_1\in W(\sigma)$, then $W(\sigma)\cap\bR\sigma=\bR u_1$ and thus $V(\sigma)$ is not contained in the singular locus of $\cF$ by Proposition~\ref{singularlocus}. 
    \end{itemize}

    For (2), let $\sigma=\on{Cone}(u_1,\ldots,u_s)\in\Delta$ where $s\geq\dim\sigma$. 
    Since $\cF$ is terminal at the generic point of $V(\sigma)$, we have $\Pi_{\sigma,\,W(\sigma)}\neq\sigma$ by Proposition~\ref{can_term_toric}(3). 
    Consequently, one of $u_1,\ldots,u_s$ is contained in $W(\sigma)$, say $u_1$. 
    As $\on{rank}(\cF)=1$, we have $\dim W(\sigma)\leq 1$ and thus $W(\sigma)=\bR u_1$. 
    Thus, by Proposition~\ref{singularlocus}, $V(\sigma)$ is not contained in the singular locus of $\cF$. 
\end{proof}

\subsection{F-dlt modification}
Following \cite[Definition 3.28]{CS}, we introduce the following definition of an F-dlt modification for a foliated pair of any rank. 
Moreover, we show that any toric foliated pair admits an F-dlt modification. 
\begin{definition}
    Let $(\cF,D=\sum_ia_i D_i)$ be a foliated pair on a normal variety where $D$ is effective. 
    We denote \[\widetilde{D}=\sum_i\min\{a_i,\iota(D_i)\}D_i.\] 
    An \emph{F-dlt modification} for $(\cF,D)$ is a birational projective morphism $\pi\colon Y\to X$ such that if $\cG$ is the pullback foliation on $Y$ then the foliated pair $(\cG,\pi_*^{-1}\widetilde{D}+\sum_i\iota(E_i)E_i)$ is F-dlt, where the sum is over all $\pi$-exceptional divisors, and $K_\cG+\pi_*^{-1}D+\sum_i\iota(E_i)E_i+F=\pi^*(K_\cF+D)$ for some effective $\pi$-exceptional divisor $F$ on $Y$. 
\end{definition}

The existence of F-dlt modifications is shown for corank 1 foliated pairs on normal projective varieties of dimensions at most three in \cite[Theorem 8.1]{CS}. 
We demonstrate the existence of F-dlt modifications for toroidal foliated pairs of any rank. 

\begin{theorem}\label{Fdlt_modification}
    Let $(\cF,D)$ be a toroidal foliated pair on a normal variety $X$ with associated extended complex $(\Delta,W)$. 
    Assume that $D$ is effective. 
    Then $(\cF,D)$ admits an F-dlt (toroidal) modification $\pi\colon Y\to X$ such that $Y$ is $\bQ$-factorial. 
\end{theorem}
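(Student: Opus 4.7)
The plan is to produce a simplicial refinement $\Sigma'$ of $\Sigma$ so that the induced toric morphism $\pi\colon X_{\Sigma'}\to X_\Sigma$ is the desired F-dlt modification. Set $\phi := \phi_{(K_{\cF_W} + \Delta)}$ and write $\Delta = \sum_{\rho \in \Sigma(1)} d_\rho D_\rho$ with $d_\rho \geq 0$, so that $\phi(v_\rho) = \iota(D_\rho) - d_\rho$. By Proposition~\ref{discrepancy}, the required $F$ is $-\sum_E\big(a(E,\cF_W,\Delta)+\iota(E)\big)E$, and its effectivity amounts to the combinatorial condition $\phi(v_\rho) \leq 0$ for every primitive vector $v_\rho$ of every new ray $\rho \in \Sigma'(1)\setminus \Sigma(1)$. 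The set $R := \{\rho \in \Sigma(1) : \rho \subseteq W,\; d_\rho < 1\}$ is exactly the set of original rays at which $\phi > 0$.

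I would proceed in four steps. First, triangulate each non-simplicial cone of $\Sigma$ using only its existing rays to obtain a simplicial refinement $\Sigma_1$ with $\Sigma_1(1) = \Sigma(1)$. Second, pass to the subfan $\Sigma_1^\star := \{\sigma \in \Sigma_1 : \sigma(1) \cap R = \emptyset\}$; by linearity of $\phi$ on each cone, $\phi \leq 0$ throughout $|\Sigma_1^\star|$. Third, apply the star-subdivision procedure of Proposition~\ref{dagger_resolution} to produce a simplicial refinement $(\Sigma_1^\star)^\sharp$ of $\Sigma_1^\star$ with $((\Sigma_1^\star)^\sharp,W)$ satisfying $(\dagger)$; every ray introduced lies in $|\Sigma_1^\star|$ and therefore has $\phi \leq 0$. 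Fourth, propagate these star subdivisions from $\Sigma_1^\star$ to the whole fan $\Sigma_1$ — this requires only star-subdividing those cones of $\Sigma_1$ which contain each new ray, and introduces no additional rays — to obtain a simplicial fan $\Sigma'$. The fan $\Sigma'$ refines $\Sigma$ and is simplicial (so $Y := X_{\Sigma'}$ is $\bQ$-factorial), and every new ray $\rho \in \Sigma'(1) \setminus \Sigma(1)$ satisfies $\phi(v_\rho) \leq 0$, making $F$ effective.

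To verify that $\pi\colon Y \to X_\Sigma$ is an F-dlt modification, I would appeal to Proposition~\ref{Fdlt_prop} applied to the pair $\big(\cF_W,\; \pi_*^{-1}\widetilde{\Delta}_{\on{n-inv}} + \sum_i \iota(E_i) E_i\big)$. Condition~(1) is immediate from the shape of the boundary. For condition~(2), a direct calculation identifies the cones $\sigma'\in\Sigma'$ on which the new support function vanishes as precisely those for which every ray $\rho \in \sigma'(1)$ avoids $R$; all such cones are automatically simplicial. The main obstacle I foresee is verifying $(\dagger)$ for these good cones: one must argue that every $R$-avoiding cone of $\Sigma'$ in fact coincides with a cone of $(\Sigma_1^\star)^\sharp$, where $(\dagger)$ has been arranged. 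The preparatory simplicialization in step~1 is crucial here, since in a simplicial cone of $\Sigma_1$ the face generated by the non-$R$ rays is a genuine face; this allows the propagation of star subdivisions in step~4 to be tracked inductively and prevents the creation of spurious $R$-avoiding cones outside $(\Sigma_1^\star)^\sharp$.
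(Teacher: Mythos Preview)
Your proposal is correct and follows a genuinely different route from the paper's.

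Both proofs begin by passing to a simplicial refinement $\Sigma_1$ with $\Sigma_1(1)=\Sigma(1)$. After that they diverge. The paper applies Proposition~\ref{dagger_resolution} to the \emph{entire} fan $\Sigma_1$, so the output $\Sigma''$ satisfies $(\dagger)$ globally and the F-dlt check via Proposition~\ref{Fdlt_prop} is immediate; the remaining work is to show each new ray $\rho$ has $\phi(v_\rho)\le 0$. The paper extracts this from a structural feature of the algorithm in Proposition~\ref{dagger_resolution}: every new ray arises as $W\cap\sigma$ for a cone $\sigma$ whose rays all lie outside $W$, and an easy induction shows $\sigma\in\Sigma_1$; since $\Delta\ge 0$ and $\iota(D_{\rho'})=0$ for each $\rho'\in\sigma(1)$, one gets $\phi|_\sigma\le 0$.

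You instead restrict Proposition~\ref{dagger_resolution} to the subfan $\Sigma_1^\star$ where $\phi\le 0$, so the effectivity of $F$ comes for free, and then propagate the star subdivisions to all of $\Sigma_1$. The price is that your output $\Sigma'$ need not satisfy $(\dagger)$ globally, so the F-dlt verification is no longer automatic: you must show that every cone of $\Sigma'$ on which the new support function vanishes (equivalently, every $R$-avoiding cone) already belongs to $(\Sigma_1^\star)^\sharp$. Your inductive tracking does go through; the key step is that when a new cone $\text{Cone}(\rho_i)+\tau'$ avoids $R$, so does $\gamma+\tau'$ (where $\gamma$ is the carrier of $\rho_i$), because $\gamma$ itself lies in $\Sigma_1^\star$. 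It is worth noting that the paper's structural observation actually shows the two constructions visit the same cones and therefore produce essentially the same refinement, so the difference is more in the logical organisation than in the output.
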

\begin{proof}
    By \cite[Proposition 11.1.7]{CLS11}, there is a simplicial subdivision $\Delta'$ of $\Delta$ such that $\Delta'(1)=\Delta(1)$. 
    Then the associated birational morphism $p\colon X' \to X$ is small and projective. 
    Let $\cF'=p^{-1}\cF$ and $D'=p_*^{-1}D$. 
    Then we have $p^*(K_{\cF}+D)=K_{\cF'}+D'$ and $\phi_{(\cF,D)} = \phi_{(\cF',D')}$. 

    Let $S=\{\sigma'\in\Delta':\phi_{(\cF',\widetilde{D'})}\vert_{\sigma'}=0\}$. 
    Now we perform a sequence of star subdivisions as demonstrated in Proposition~\ref{dagger_resolution}, with respect to the cones in $S$. Let $\Delta''$ be the resulting subdivision, $\psi\colon X''\to X'$ be the induced birational morphism, $(\Delta'',W'')$ be the associated extended complex of $\cF'':=\psi^{-1}\cF'$, and $D'':=\psi^{-1}_*\widetilde{D'}=(p\circ\psi)_*^{-1}\widetilde{D}$. 
    One can check that if $\sigma''\in\Delta''$ and $\phi_{(\cF',\widetilde{D'})}\vert_{\sigma''}=0$, then 
    $(\sigma'',W''(\sigma''))$ is non-dicritical.
    Note that by construction, we have $\psi^*(K_{\cF'}+\widetilde{D'})=K_{\cF''}+D''+\sum_i\iota(E_i)E_i$. 
    Hence, they give the same support function, and by Proposition~\ref{Fdlt_prop}, $(\cF'',(p\circ\psi)_*^{-1}\widetilde{D}+\sum_i\iota(E_i)E_i)$ is F-dlt. Thus, $p\circ \psi$ is a desired modification. 
\end{proof}

\section{Toric foliated minimal model program}\label{sec:toric_FMMP}

Throughout this section, we assume that  $\mc F_W$ is a toric foliation on a complete $\mb Q$-factorial toric variety $X_\Sigma$ of dimension $n$. Hence, $W\subseteq N\otimes\bC$ is a vector subspace and $\Sigma$ is a complete simplicial fan in $N\otimes\bR$. 
The minimal model program can be carried out for any  ${\mb Q}$-divisor $D$ on $X_\Sigma$ (see \cite[Chapter 14]{matsuki2002mmp} or \cite[Section 15.4 and 15.5]{CLS11}). That is, the necessary contractions and flips exist, any sequence of flips terminates, and if at some point the divisor becomes nef then at that point it becomes semi-ample. 

\subsection{Preliminaries}\label{subsec_notations} 

Let $R\subseteq\overline{\on{NE}}(X_\Sigma)$ be an extremal ray.
By \cite[Theorem 6.3.20]{CLS11}, $R=\mb R_{\geq 0} [V_\omega]$ for some $\omega\in\Sigma(n-1)$. 
By \cite[Theorem 14-1-9]{matsuki2002mmp} or \cite[Proposition 15.4.1]{CLS11}, there is a toric variety $X_{\Sigma_0}$ and a toric morphism $\phi_R\colon X_\Sigma\to X_{\Sigma_0}$ such that for any $\tau\in\Sigma(n-1)$, $\phi_R(V_\tau)$ is a point if and only if $[V_\tau]\in R$. The fan $\Sigma_0$ is obtained by ``removing'' the walls $\omega\in\Sigma(n-1)$ such that $[V_\omega]\in R$. 

For any $\omega\in \Sigma(n-1)$ with $[V_\omega]\in R$, as $X_\Sigma$ is simplicial, we can write $\omega=\on{Cone}(v_1, \ldots, v_{n-1})$ where each $v_i\in N$ is the primitive generator for some ray $\rho_i\in\Sigma(1)$. 
Since $\Sigma$ is complete, there are two primitive vectors $v_n$ and $v_{n+1}$ in $N$ such that 
\begin{align*}
\sigma^{n+1} &= \on{Cone}(v_1, \ldots, v_{n})\quad\textnormal{and}\\
\sigma^n &= \on{Cone}(v_1, v_2, \ldots, v_{n-1}, v_{n+1}) 
\end{align*}
are $n$-dimensional cones in $\Sigma(n)$. 
There is a unique non-trivial linear relation $\sum_{i=1}^{n+1}a_{i}v_{i}=0$ 
with $a_{n+1}=1$ and $a_i\in\bQ$ for each $i$. 
After re-indexing, we may assume that 
\[a_i\begin{cases}
    <0 & \textnormal{if $1\leq i\leq \alpha$} \\
    =0 & \textnormal{if $\alpha+1\leq i\leq \beta$} \\
    >0 & \textnormal{if $\beta+1\leq i\leq n+1$}
\end{cases}\]
for some $\alpha$, $\beta\in\bZ_{\geq 0}$. The following notations will be used in the subsequent discussions: 

\begin{itemize}
    \item $\sigma(\omega)=\on{Cone}(v_1,\ldots,v_{n+1})$.
    \item $\sigma^j=\on{Cone}(v_1,\ldots, \widehat{v_j},\ldots,v_{n+1})$ for $j\in [1,n+1]\cap\bN$.
    \item $\sigma_J=\on{Cone}(v_j\mid j\in J)$ for any subset $J\subseteq [1,n+1]\cap\bN$.
    \item $J_-=[1,\alpha]\cap\bN$, $J_0=[\alpha+1,\beta]\cap\mb N$, and $J_+=[\beta+1,n+1]\cap\bN$.
\end{itemize}

We recall some facts for later use. There is a decomposition $\sigma(\omega)=\bigcup_{j\in J_+}\sigma^j$ with $\sigma^j\in\Sigma$ for any $j\in J_+$ \cite[Proposition 14-2-1]{matsuki2002mmp}. The exceptional locus $\on{Exc(\phi_R)}$ is $V_{\sigma_{J_-}}$ \cite[Corollary 14-2-2]{matsuki2002mmp}. In particular, $\sigma_{J_-}$ is independent of the choice of $\omega$. 

For any $\omega\in\Sigma(n-1)$ not necessarily generating an extremal ray in $\overline{\on{NE}}(X_\Sigma)$, we can still use the notation above.

\begin{lemma}\label{lc_positive_ray}
    Let $(\cF_W,D)$ be a log canonical toric foliated pair on $X_\Sigma$ with $D\geq 0$, and let $\omega\in\Sigma(n-1)$ be a wall such that $(K_{\mc F_W}+D)\cdot V_\omega<0$. Notation as above. Then there exists $\ell\in J_+$ such that $v_\ell\in W$. 
\end{lemma}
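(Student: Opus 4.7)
The plan is to compute $(K_{\cF_W}+\Delta)\cdot V_\omega$ by toric intersection theory and to extract the claim from the sign. First, from $\on{div}(\chi^m)\cdot V_\omega=0$ for every $m\in M$, one obtains $\sum_{i=1}^{n+1} v_i\,(D_{\rho_i}\cdot V_\omega) = 0$ in $N_\bR$, since only the $D_{\rho_i}$ with $\rho_i\in\sigma(\omega)(1)$ can give a nonzero contribution. Because $v_1,\ldots,v_{n+1}$ span the $n$-dimensional space $\bR\sigma(\omega)$, the space of linear relations among them is one-dimensional, generated by $(a_1,\ldots,a_{n+1})$. Hence $D_{\rho_i}\cdot V_\omega = c\cdot a_i$ for a single scalar $c$ independent of $i$.

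Next I would verify that $c>0$. Before re-indexing, $v_n$ and $v_{n+1}$ lie on opposite sides of the hyperplane $H:=\bR\omega$ (otherwise $\sigma^n$ and $\sigma^{n+1}$ would overlap); applying a linear functional that vanishes on $H$ to the wall relation shows that $a_n$ and $a_{n+1}$ have the same sign, so with the normalization $a_{n+1}=1$ both are positive and both indices lie in $J_+$ after re-indexing. Since the extra rays are not faces of $\omega$, $V_\omega\not\subseteq D_{\rho_n},\,D_{\rho_{n+1}}$; and as $V_\omega$ is a complete curve meeting these effective divisors, $D_{\rho_n}\cdot V_\omega>0$ and $D_{\rho_{n+1}}\cdot V_\omega>0$, which combined with $a_n,a_{n+1}>0$ forces $c>0$.

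By Proposition~\ref{can_divisor_prop}, $K_{\cF_W} = -\sum_\rho \iota(D_\rho)D_\rho$, and by Proposition~\ref{lc} together with $\Delta\geq 0$ the coefficients of $\Delta = \sum_\rho d_\rho D_\rho$ satisfy $0\leq d_\rho\leq\iota(D_\rho)\in\{0,1\}$; in particular $d_\rho=0$ whenever $\iota(D_\rho)=0$. Setting $\iota_i:=\iota(D_{\rho_i})$ and $d_i:=d_{\rho_i}$, we obtain $K_{\cF_W}+\Delta = \sum_{i=1}^{n+1}(d_i-\iota_i)D_{\rho_i}+(\textnormal{other rays})$, so the formula of the first step gives
\[
(K_{\cF_W}+\Delta)\cdot V_\omega \;=\; c\sum_{i=1}^{n+1}(d_i-\iota_i)\,a_i.
\]
Each term with $i\in J_+$ is $\leq 0$, each with $i\in J_0$ vanishes, and each with $i\in J_-$ is a product of two non-positive numbers, hence $\geq 0$. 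For the total to be strictly negative, some $\ell\in J_+$ must satisfy $(d_\ell-\iota_\ell)a_\ell<0$; as $a_\ell>0$ this forces $d_\ell<\iota_\ell$, which in turn forces $\iota_\ell=1$ (else $d_\ell\leq 0=\iota_\ell$). Thus $v_\ell\in W$, as required. The only subtle step is the sign analysis establishing $c>0$ and that both extra rays belong to $J_+$; everything else is a direct computation from standard toric intersection theory combined with the already-proved combinatorial log-canonicity criterion.
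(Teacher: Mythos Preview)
Your proof is correct and takes essentially the same approach as the paper: both use the sign of $D_{\rho_i}\cdot V_\omega$ (proportional to $a_i$) together with the log-canonicity bound $d_\rho\leq\iota(D_\rho)$ to locate a non-$\cF_W$-invariant component of $-(K_{\cF_W}+\Delta)$ intersecting $V_\omega$ positively. The paper simply cites \cite[Lemma~14-1-7]{matsuki2002mmp} for the intersection computation and then argues directly from effectivity of $-(K_{\cF_W}+\Delta)$, whereas you unpack that lemma and carry out the term-by-term sign analysis explicitly; the substance is the same.
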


\begin{proof}
    By Proposition~\ref{lc} and the assumption that $D\geq 0$, we have $-(K_{\mc F_W}+D)$ is effective.
    Then $-(K_{\mc F_W}+D)\cdot V_\omega>0$ implies that there is a component $D_\rho$ of $\on{Supp}(-(K_{\mc F_W}+D))\subseteq\on{Supp}(-K_{\mc F_W})$ such that $D_\rho\cdot V_\omega>0$. We must have $\rho=\mb R_{\geq 0}v_\ell$ for some $\ell\in J_+$ by \cite[Lemma~14-1-7]{matsuki2002mmp}, which is still true in this situation. Since $D_\rho$ is non-$\mc F_W$-invariant, we have $v_\ell\in W$.
\end{proof}

\subsection{Divisorial contractions}

In this subsection, we assume that $\alpha=1$. By \cite[Proposition 15.4.5]{CLS11}, this corresponds to $\phi_R\colon X_\Sigma\to X_{\Sigma_0}$ being a divisorial contraction. In this case, the fan $\Sigma$ is the star subdivision of $\Sigma_0$ for $v_1$ (see the proof of \cite[Proposition 15.4.5]{CLS11}), and $\Sigma_0$ is simplicial. 
\begin{proposition}\label{div_contr}
    Let $(\cF_W,D)$ be a log canonical toric foliated pair on $X_\Sigma$ with $D\geq 0$, and let $R\subseteq\overline{\on{NE}}(X_\Sigma)$ be a $(K_{\cF_W}+D)$-negative extremal ray.  
    Assume that $\phi_R\colon X_\Sigma\to X_{\Sigma_0}$ is a divisorial contraction. 
    Then we have the following: 
    \begin{enumerate}
        \item If $\cF_{W}$ on $X_\Sigma$ is non-dicritical, then so is $(\phi_R)_*\cF_{W} = \cF_{W,\,\Sigma_0}$ on $X_{\Sigma_0}$. 
        \item If $(\cF_{W},D)$ is F-dlt, then so is $(\cF_{W,\,\Sigma_0},D_0)$ where $D_0=(\phi_R)_*D$. 
    \end{enumerate}
    
\end{proposition}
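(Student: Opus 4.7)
The plan is to rephrase both statements combinatorially via Proposition~\ref{ND_toroidal} and Proposition~\ref{Fdlt_prop}, so that (1) reduces to showing $(\Sigma_0,W)$ satisfies $(\dagger)$ given that $(\Sigma,W)$ does. Since $\Sigma$ is the star subdivision of $\Sigma_0$ for $v_1$, the cones of $\Sigma_0$ not already in $\Sigma$ are exactly those $\tau$ with $\sigma_0\preceq\tau$, where $\sigma_0$ denotes the (intrinsically defined) minimal cone of $\Sigma_0$ containing $v_1$. A preliminary fact I will record is $\sigma_0=\sigma_{J_+(\omega)}$ for every wall $\omega$ of $R$: the relation with $a_1<0$ places $v_1\in\on{Relint}(\sigma_{J_+(\omega)})$, and distinct cones of a fan cannot share a relative-interior point, so in particular $J_+(\omega)$-rays coincide with $\sigma_0(1)$ independently of the choice of wall.

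For the heart of (1), I would take $\tau\in\Sigma_0\setminus\Sigma$ and $u=\sum_{\rho\in\tau(1)}c_\rho v_\rho\in\on{Relint}(\tau)\cap W\cap N$ (with all $c_\rho>0$), set $\alpha:=\min_{i\in J_+}c_{\rho_i}/a_i$, $S:=\arg\min$, and fix $j^*\in S$. Substituting $v_{j^*}=\tfrac{-a_1}{a_{j^*}}v_1-\sum_{i\in J_+\setminus\{j^*\}}\tfrac{a_i}{a_{j^*}}v_i$ into $u$, a direct coefficient calculation shows that $u$ lies in the relative interior of
\[\tilde\tau:=\on{Cone}\bigl(\{v_1\}\cup\{v_i:i\in J_+\setminus S\}\cup(\tau(1)\setminus\sigma_0(1))\bigr),\]
which is a face of $\on{Cone}(v_1,\tau(1)\setminus\{v_{j^*}\})\in\Sigma$. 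Applying $(\dagger)$ to $\tilde\tau$ yields $v_1\in W$, together with $v_i\in W$ for every $i\in J_+\setminus S$ and $v_\rho\in W$ for every $\rho\in\tau(1)\setminus\sigma_0(1)$. The main obstacle is to force the remaining rays $\{v_i:i\in S\}$ into $W$; this is the only place where the $(K_{\cF_W}+\Delta)$-negativity of $R$ will be used. Applying Lemma~\ref{lc_positive_ray} to any wall of $R$ supplies some $v_\ell\in W$ with $\ell\in J_+=\sigma_0(1)$, and I will then deduce $\sigma_0\subseteq W$ from the following combinatorial key lemma:
\begin{center}
\emph{If $(\Sigma,W)$ satisfies $(\dagger)$, $v_1\in W$, and $v_\ell\in W$ for some $\ell\in J_+$, then $\sigma_0\subseteq W$.}
\end{center}

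To prove the key lemma, the relation gives
\[\sum_{i\in J_+\setminus\{\ell\}}\frac{a_i}{-a_1}v_i=v_1-\frac{a_\ell}{-a_1}v_\ell\in W,\]
an element with strictly positive coefficients on all rays of the facet $F_\ell:=\on{Cone}(v_i:i\in J_+\setminus\{\ell\})$ of $\sigma_0$; after clearing denominators (using that the $a_i$ are integers) this produces an element of $W\cap N$ lying in $\on{Relint}(F_\ell)$. Simpliciality of $\sigma_0$, inherited from $\Sigma_0$, together with the positivity of the coefficient of $v_\ell$ in $v_1=\sum_{i\in J_+}\tfrac{a_i}{-a_1}v_i$, implies $v_1\notin F_\ell$, so $F_\ell$ is untouched by the star subdivision and lies in $\Sigma$; applying $(\dagger)$ for $\Sigma$ to $F_\ell$ yields $F_\ell\subseteq W$, and combined with $v_\ell\in W$ gives $\sigma_0\subseteq W$. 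This, together with $\tau(1)\setminus\sigma_0(1)\subseteq W$ from the previous step, forces $\tau\subseteq W$ and completes (1). Part (2) then follows quickly: Proposition~\ref{Fdlt_ND_prop} shows that F-dlt of $(\cF_W,\Delta)$ implies $\cF_W$ is non-dicritical, so (1) yields non-dicriticality of $\cF_{W,\Sigma_0}$, i.e., $(\Sigma_0,W)$ satisfies $(\dagger)$; it then remains to verify the two criteria of Proposition~\ref{Fdlt_prop} for $(\cF_{W,\Sigma_0},\Delta_0)$, where the coefficient/support condition transfers directly because $\Sigma_0(1)\subseteq\Sigma(1)$ and $(\varphi_R)_*$ preserves multiplicities along non-exceptional divisors, while the condition on vanishing loci of the support function is automatic from $\Sigma_0$ being simplicial and every $\tau\in\Sigma_0$ being non-dicritical via $(\dagger)$.
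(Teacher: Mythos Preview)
Your proof is correct, but the route you take for (1) is considerably more roundabout than the paper's. The paper's argument dispenses with your substitution step and your key lemma entirely: after noting (as you do) that only cones $\sigma_J$ with $J_+\subseteq J$ need checking, it invokes Lemma~\ref{lc_positive_ray} \emph{first} to obtain some $\ell\in J_+$ with $v_\ell\in W$, and then simply subtracts $c_\ell v_\ell$ from the given $v\in\on{Relint}(\sigma_J)\cap W\cap N$. The difference $v':=v-c_\ell v_\ell$ lies (up to a rational multiple) in $\on{Relint}(\sigma_{J\setminus\{\ell\}})\cap W\cap N$, and since $\sigma_{J\setminus\{\ell\}}\preceq\sigma^\ell\in\Sigma$, a single application of $(\dagger)$ gives $\sigma_{J\setminus\{\ell\}}\subseteq W$; together with $v_\ell\in W$ this yields $\sigma_J\subseteq W$ immediately. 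Your two-phase argument---first rewriting $u$ via the wall relation to extract $v_1\in W$ and $\tau(1)\setminus\sigma_0(1)\subseteq W$, then proving a separate key lemma that $v_1,v_\ell\in W$ forces $\sigma_0\subseteq W$---reaches the same conclusion but introduces $v_1$ unnecessarily. (A tiny slip: the $a_i$ are rational, not integers, though clearing denominators works the same way.) For (2), your direct verification of the criteria in Proposition~\ref{Fdlt_prop} is fine; the paper streamlines this by observing that on a $\bQ$-factorial toric variety with $\Delta\geq 0$, F-dlt is equivalent to non-dicritical plus log canonical (combining Propositions~\ref{Fdlt_prop}, \ref{Fdlt_ND_prop}, and~\ref{lc}), so (2) becomes an immediate consequence of (1) and the preservation of log canonicity under the MMP.
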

\begin{proof}
Notation as in subsection~\ref{subsec_notations}. 
    For (1), it suffices to show that $(\Sigma_0,W)$ satisfies the condition $(\dagger)$, that is, $(\tau,W)$ is non-dicritical for all $\tau\in\Sigma_0$. Note that a full-dimensional cone in $\Sigma_0$ contains $v_1$ if and only if it is of the form $\sigma(\omega)$ for some $\omega\in\Sigma(n-1)$ with $[V_\omega]\in R$. If there is no full-dimensional cone in $\Sigma_0$ containing both $v_1$ and $\tau$, then $\tau\in\Sigma$ and $(\tau,W)$ is non-dicritical by the condition $(\dagger)$ for $(\Sigma,W)$.
    Assume that $\tau\subseteq\sigma(\omega)$ for some $\omega\in\Sigma(n-1)$ with $[V_\omega]\in R$. We can write $\tau=\sigma_J$ for some $J\subseteq [2,n+1]\cap\bN$. 
    If $J_+\nsubseteq J$, then we can choose $j\in J_+\setminus J$ so that $\sigma_J\preceq\sigma^j\in \Sigma$.
    Then $(\sigma_J,W)$ is non-dicritical by the condition $(\dagger)$ for $(\Sigma,W)$. 
        
    Suppose that $J_+\subseteq J$, and that there is an element $v\in\on{Relint}(\sigma_J)\cap W\cap N$.  Write $v=\sum_{j\in J}c_jv_j$ where $c_j\in\bQ\setminus\{0\}$ for each $j$. 
    By Lemma~\ref{lc_positive_ray}, there exists $\ell\in J_+$ such that $v_\ell\in W$.
     Since $|J|\geq|J_+|\geq 2$, $v':=v-c_\ell v_\ell$ is a non-zero element in $\on{Relint}(\sigma_{J\setminus\{\ell\}})\cap W\cap N\otimes\bQ$, and  
    therefore, $\on{Relint}(\sigma_{J\setminus\{\ell\}})\cap W\cap N\neq\emptyset$. 
    By $\sigma_{J\setminus\{\ell\}}\preceq \sigma^\ell\in\Sigma$ and the
    condition $(\dagger)$ for $(\Sigma,W)$, we conclude that $\sigma_{J\setminus\{\ell\}}\subseteq W$.
    Hence $\sigma_J=\sigma_{J\setminus\{\ell\}}+\mb R_{\geq 0}v_\ell\subseteq W$.
       
    For (2), since $X_\Sigma$ is $\mb Q$-factorial, by Proposition~\ref{Fdlt_prop}, Proposition~\ref{Fdlt_ND_prop} and Proposition~\ref{lc}, $(\mc F_W,D)$ is F-dlt if and only if it is non-dicritical and log canonical. The assertion now follows from (1) as MMP preserves log canonical singularities.
\end{proof}

\subsection{Fiber type contractions}

In this subsection, we assume that $\alpha=0$. By \cite[Corollary 14-2-2]{matsuki2002mmp}, this corresponds to $\phi_R\colon X_\Sigma\to X_{\Sigma_0}$ being a fiber type contraction. In this case, $U:=\sigma_{J_+}$ is a vector subspace of $N\otimes\bR$, and $\sigma(\omega)=\sigma_{J_0}\times U$. 
Let $\overline{N}=N/(N\cap U)$ (hence $\overline{N}\otimes\bR=N\otimes\bR/U$).
$\Sigma_0$ is a complete, simplicial fan in $\overline{N}\otimes\bR$, whose collection of full-dimensional cones is $\{\sigma(\omega)\mid \omega\in\Sigma(n-1), [V_\omega]\in R\}$. 
See \cite[Theorem 14-1-9 and its proof, Proposition 14-2-1]{matsuki2002mmp} for details.  

\begin{proposition}\label{MFS}
    Let $(\cF_W,D)$ be a log canonical toric foliated pair on $X_\Sigma$ with $D\geq 0$, and let $R\subseteq\overline{\on{NE}}(X_\Sigma)$ be a $(K_{\cF_W}+D)$-negative extremal ray. 
    Assume that  $\phi_R\colon X_\Sigma\to X_{\Sigma_0}$ is a fiber type contraction, and  
    denote the linear subspace $\sigma_{J_+}\subseteq N\otimes\bR$ by $U$.
    Then we have the following: 
    \begin{enumerate}
        \item If the foliation $\cF_W$ on $X_\Sigma$ is non-dicritical, then $U\subseteq W$ and any fiber of $\phi_R$ that intersects $T_N$ is tangent to $\cF_W$.
        \item Let $\overline{W} = W/(U\otimes\bC)\subseteq \overline{N}\otimes\bC$. Then $\phi_R^{-1}\cF_{\overline{W}}=\mc F_W$ and $\cF_{\overline{W}}$ is non-dicritical on $X_{\Sigma_0}$. 
        \item If $(\cF_W,D)$ is F-dlt, then so is $(\cF_{\overline{W}},\overline{D})$ where $\overline{D}=(\phi_R)_*D$. 
    \end{enumerate}
\end{proposition}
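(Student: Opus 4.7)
The plan is to dispatch each of the three parts in turn, with the main content being condition $(\dagger)$ verifications in the fans $\Sigma$ and $\Sigma_0$.

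For (1), Lemma~\ref{lc_positive_ray} produces some $\ell\in J_+$ with $v_\ell\in W$. The unique positive linear relation $\sum_{j\in J_+}a_jv_j=0$ rewrites as $-a_\ell v_\ell=\sum_{j\in J_+\setminus\{\ell\}}a_jv_j$, which places a nonzero rational element of $W$ in $\on{Relint}(\sigma_{J_+\setminus\{\ell\}})$; clearing denominators yields a lattice point there. Since $\sigma_{J_+\setminus\{\ell\}}$ is a face of $\sigma^\ell\in\Sigma$, Proposition~\ref{ND_toroidal} applied to $(\Sigma,W)$ forces $\sigma_{J_+\setminus\{\ell\}}\subseteq W$, and combined with $v_\ell\in W$ this gives $U=\sigma_{J_+}\subseteq W$. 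Tangency of fibers meeting $T_N$ is then immediate from Proposition~\ref{restriction}(1) together with the inclusion $\cF_U\subseteq\cF_W$ (which holds because $U\subseteq W$ and $W$ being complex forces $U_\bC\subseteq W$).

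For (2), the identity $\varphi_R^{-1}\cF_{\overline W}=\cF_W$ is the easier half: both foliations are $T_N$-equivariant, so it suffices to compare them on $T_N$, where the left hand side corresponds to the preimage of $\overline W$ under the linear surjection $N_\bC\to\overline N_\bC$, and this preimage is $W$ precisely because $\overline W=W/U_\bC$ with $U_\bC\subseteq W$. To verify $(\Sigma_0,\overline W)$ satisfies $(\dagger)$, take $\overline\tau\in\Sigma_0$ and a nonzero $\overline v\in\on{Relint}(\overline\tau)\cap\overline W\cap\overline N$. Using the structure of $\Sigma_0$ (its top-dimensional cones are images of the $\sigma_{J_0}$ attached to walls in $R$, and all other cones are images of faces of these), I pick $\tau\in\Sigma$ with $\tau\preceq\sigma_{J_0}$ and $\pi(\tau)=\overline\tau$; then $\pi|_{\on{Span}_\bR\tau}$ is a rational linear isomorphism onto $\on{Span}_\bR\overline\tau$, so the unique lift $v_\tau\in\on{Relint}(\tau)$ of $\overline v$ is rational. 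Any $W$-lift of $\overline v$ differs from $v_\tau$ by an element of $U_\bC\subseteq W$, hence $v_\tau\in W$; scaling by a positive integer produces a nonzero element of $\on{Relint}(\tau)\cap W\cap N$, and the condition $(\dagger)$ for $(\Sigma,W)$ forces $\tau\subseteq W$, whence $\overline\tau=\pi(\tau)\subseteq\overline W$. Proposition~\ref{ND_toroidal} then delivers the non-dicriticality of $\cF_{\overline W}$.

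For (3), I would first observe that the Weil-divisor push-forward $\overline\Delta=(\varphi_R)_*\Delta$ vanishes in the fiber-type toric setting: for each toric prime divisor $D_\rho$ in $\on{Supp}(\Delta)$ the restriction $\varphi_R|_{D_\rho}$ fails to be generically finite onto a divisor of $X_{\Sigma_0}$ (when $\rho\not\subseteq U$ the image $V_{\pi(\rho)}$ has codimension one but $\varphi_R|_{D_\rho}$ has positive-dimensional generic fibers, and when $\rho\subseteq U$ the image is not of codimension one), so $\overline\Delta=0$. The task reduces to checking that $(\cF_{\overline W},0)$ is F-dlt via Proposition~\ref{Fdlt_prop}: the support/coefficient clause is vacuous, and the second clause requires, for each $\overline\sigma\in\Sigma_0$ with $\phi_{K_{\cF_{\overline W}}}|_{\overline\sigma}=0$, that $\overline\sigma$ be simplicial and $(\overline\sigma,\overline W)$ be non-dicritical. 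Simpliciality is inherited from $\Sigma$ via the rational linear isomorphism $\pi|_{\on{Span}_\bR\sigma_{J_0}}$ (cones of $\Sigma_0$ are images of faces of simplicial $\sigma_{J_0}$), and non-dicriticality of every cone of $\Sigma_0$ was already established in (2).

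The main obstacle I anticipate is the lifting step in (2): for a given $\overline\tau\in\Sigma_0$ one must identify a cone $\tau\in\Sigma$ contained in some $\sigma_{J_0}$ whose image is $\overline\tau$, and then ensure the lift of $\overline v$ lies simultaneously in $\on{Relint}(\tau)$, in $W$, and (after scaling) in $N$. This depends on unpacking the combinatorial description of $\Sigma_0$ relative to $\Sigma$ that is implicit in \cite[Chapter 14]{matsuki2002mmp}; once it is in hand, both the $(\dagger)$-verification in (2) and the simpliciality step of (3) fall out uniformly.
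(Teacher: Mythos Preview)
Your proposal is correct and follows essentially the same approach as the paper. For (1) and (2) your arguments are almost line-for-line identical to the paper's (the paper writes the lift of $\overline v$ as $v=u_1+u_2$ with $u_1\in\on{Relint}(\sigma_J)$ and $u_2\in U$, which is exactly your $v_\tau$ plus a $U$-correction); the lifting ``obstacle'' you flag is handled in the paper simply by observing that every cone of $\Sigma_0$ has the form $\sigma_J+U$ for some $J\subseteq J_0$, so $\sigma_J\preceq\sigma^{n+1}\in\Sigma$ is the lift you want.

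The only difference is in (3): the paper dispatches it by invoking the equivalence, valid on $\bQ$-factorial toric varieties, of F-dlt with ``non-dicritical plus log canonical'' (Propositions~\ref{Fdlt_prop}, \ref{Fdlt_ND_prop}, \ref{lc}), then cites (2) for non-dicriticality and notes log canonicity is automatic. Your route---observing $\overline\Delta=0$ and checking Proposition~\ref{Fdlt_prop} directly---is equally short and arguably cleaner, since it avoids the slightly awkward appeal to ``MMP preserves log canonical singularities'' in a non-birational step. Note also that you need not argue simpliciality cone-by-cone: the paper records in the subsection preamble that $\Sigma_0$ is simplicial.
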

\begin{proof}
    Notation as in subsection~\ref{subsec_notations}. 
    For (1), by Lemma~\ref{lc_positive_ray}, there exists $\ell\in J_+$ such that $v_\ell\in W$.
    The element $\sum_{j\in J_+\setminus\{\ell\}}a_iv_i = -a_{\ell}v_{\ell}$ is in $\on{Relint}(\sigma_{J_+\setminus\{\ell\}})\cap W\cap (N\otimes\bQ)$, and therefore, $\on{Relint}(\sigma_{J_+\setminus\{\ell\}})\cap W\cap N\neq\emptyset$.
    By $\sigma_{J_+\setminus\{\ell\}}\preceq \sigma^\ell\in\Sigma$ and the condition $(\dagger)$ for $(\Sigma,W)$, we have $\sigma_{J_+\setminus\{\ell\}}\subseteq W$. Hence $\sigma_{J_+}=\sigma_{J_+\setminus\{\ell\}}+\mb R_{\geq 0}v_\ell\subseteq W$.
    
    Let $W'=U\otimes_\mb R\mb C$. Then on $T_N$, fibers of $\phi_R$ correspond to leaves of $\mc F_{W'}$ by Proposition~\ref{restriction}. Since $U\subseteq W$, we have $\mc F_{W'}\subseteq \mc F_W$ on $T_N$, implying that any fiber of $\phi_R$ on $T_N$ is tangent to $\mc F_W$.        
    
    For (2), $\phi_R^{-1}\cF_{\overline{W}}=\mc F_W$ is obvious since it is true on $T_N$.
    Let $\tau\in\Sigma_0$. Then there exists $\omega\in\Sigma(n-1)$ with $[V_\omega]\in R$ such that $\tau$ is of the form $\sigma_J+U$ for some $J\subseteq J_0$.
    Suppose that $\on{Relint}(\tau)\cap\overline{W}\cap\overline{N}\neq\emptyset$. Then $(\on{Reint}(\sigma_J)+U)\cap W\cap N\neq\emptyset$, say, $v\in(\on{Reint}(\sigma_J)+U)\cap W\cap N$. Then $v=u_1+u_2=w$ where $u_1\in\on{Reint}(\sigma_J)$, $u_2\in U$, and $w\in W$. Since $\on{Reint}(\sigma_J)$ and $U$ are rational, we actually have $u_1,u_2\in N\otimes\bQ$. Then $u_1=w-u_2\in\on{Reint}(\sigma_J)\cap W\cap (N\otimes\bQ)$ as $u_2\in W$ by (1). By the condition $(\dagger)$ for $(\Sigma,W)$ and the fact that $\sigma_{J}\preceq\sigma^{n+1}\in\Sigma$, we conclude that $\sigma_J\subseteq W$ and hence $\sigma_J+U\subseteq W$ by (1). In other words, $\tau\subseteq\overline{W}$.     
    
    (3) is similar to Proposition~\ref{div_contr}(2). 
\end{proof}

\subsection{Flipping contraction}
In this subsection, we assume that $\alpha\geq 2$, which corresponds to $\phi_R\colon X_\Sigma\to X_{\Sigma_0}$ being a flipping contraction by \cite[Corollary 14-2-2]{matsuki2002mmp}. In this case, $\sigma(\omega)$ is a strictly convex cone which is not simplicial. 
The collection of full-dimensional cones of $\Sigma_0$ is $\{\sigma(\omega)\mid \omega\in\Sigma(n-1),[V_\omega]\in R\}\cup\{\sigma\in\Sigma(n)
\mid \sigma\nsubseteq\sigma(\omega)\ \text{for any}\ \omega\in\Sigma(n-1)\ \text{with}\ [V_\omega]\in R\}$. In particular, $X_{\Sigma_0}$ is not $\mb Q$-factorial. There exists a simplicial refinement $\Sigma^+$ of $\Sigma_0$ with $\Sigma^+(1)=\Sigma(1)=\Sigma_0(1)$ which satisfies the following: a curve $V_{\omega^+}$ on $X_{\Sigma^+}$ is contracted by $\phi_R^+\colon X_{\Sigma^+}\rightarrow X_{\Sigma_0}$ if and only if $[V_{\omega^+}]\in -R$,
where we identify $N^1(X_{\Sigma^+})$ with $N^1(X_{\Sigma})$ by taking the strict transforms
of divisors and hence identify $N_1(X_{\Sigma^+})$ with $N_1(X_{\Sigma})$ as their duals.
We say $\phi_R^+$ is the flip of $\phi_R$. To be more precise, given $\omega\in\Sigma(n-1)$ with $[V_\omega]\in R$, we have
\begin{align*}
\sigma(\omega)&=\bigcup_{j\in J_+}\sigma^j\ \text{in}\ \Sigma,\ \text{and}\\
\sigma(\omega)&=\bigcup_{j\in J_-}\sigma^j\ \text{in}\ \Sigma^+.
\end{align*} 
See \cite[Corollary 14-2-2(iii), Proposition 14-2-4 and its proof]{matsuki2002mmp} for details.

\begin{proposition}\label{flip}
    Let $(\cF_W,D)$ be a log canonical toric foliated pair on $X_\Sigma$ with $D\geq 0$, and let $R\subseteq\overline{\on{NE}}(X_\Sigma)$ be a $(K_{\cF_W}+D)$-negative extremal ray. 
    Assume that  $\phi_R\colon X_\Sigma\to X_{\Sigma_0}$ is a flipping contraction. Let $\phi_R^+\colon X_{\Sigma^+}\to X_{\Sigma_0}$ be the flip of $\phi_R$ as described above and write $\psi\colon X_{\Sigma}\dashrightarrow X_{\Sigma^+}$. 
    Then we have the following: 
    \begin{enumerate}
        \item If the foliation $\cF_W$ on $X_\Sigma$ is non-dicritical, then so are $\cF_{W,\,\Sigma_0}$ and $\cF_{W,\,\Sigma^+}$. 
        \item If $(\cF_{W},D)$ is F-dlt, then so is $(\cF_{W,\,\Sigma^+},D^+)$ where $D^+=\psi_*D$. 
    \end{enumerate}
\end{proposition}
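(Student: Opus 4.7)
The plan is to follow the strategies of Propositions~\ref{div_contr} and \ref{MFS}, verifying the condition $(\dagger)$ case-by-case for the fans $\Sigma_0$ and $\Sigma^+$, and then deducing (2) from the characterization of F-dlt singularities as ``non-dicritical plus log canonical'' on $\bQ$-factorial toric varieties.

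For (1), I would first handle $\Sigma^+$. For any cone $\tau = \sigma_J \in \Sigma^+$ with $\on{Relint}(\tau) \cap W \cap N \neq \emptyset$, I would pick a witness $v = \sum_{i \in J} c_i v_i$ with $c_i \in \bQ_{>0}$ and split into two cases according to the relative position of $J$ and $J_+$. If $J_+ \not\subseteq J$, choose $j \in J_+ \setminus J$, so that $\sigma_J \preceq \sigma^j$ which is a cone of $\Sigma$; then $(\dagger)$ for $(\Sigma, W)$ yields $\sigma_J \subseteq W$. If $J_+ \subseteq J$, apply Lemma~\ref{lc_positive_ray} to pick $\ell \in J_+ \subseteq J$ with $v_\ell \in W$. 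Then $v - c_\ell v_\ell$ lies in $W \cap N_\bQ$ and, after rescaling, gives a non-zero lattice point in $\on{Relint}(\sigma_{J \setminus \{\ell\}})$ (when $|J| \geq 2$; the case $|J|=1$ is trivial). Since $\sigma_{J \setminus \{\ell\}}$ is a face of $\sigma^\ell \in \Sigma$, it lies in $\Sigma$, and $(\dagger)$ for $(\Sigma, W)$ yields $\sigma_{J \setminus \{\ell\}} \subseteq W$, whence $\sigma_J = \sigma_{J \setminus \{\ell\}} + \bR_{\geq 0} v_\ell \subseteq W$.

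For $\Sigma_0$, cones are either cones of $\Sigma$ not contained in any $\sigma(\omega)$ (handled directly by $(\dagger)$ for $\Sigma$) or faces of $\sigma(\omega)$ for some wall $\omega \in \Sigma(n-1)$ with $[V_\omega] \in R$. Any such face $\sigma_J$ with $J_+\not\subseteq J$ again reduces to $(\dagger)$ for $\Sigma$ via the inclusion $\sigma_J \preceq \sigma^j$ for some $j \in J_+ \setminus J$; any such face with $J_+ \subseteq J$ is handled by the same Lemma~\ref{lc_positive_ray} plus induction-on-$|J|$ argument used above. The most delicate cone is $\tau = \sigma(\omega)$ itself: for $v \in \on{Relint}(\sigma(\omega)) \cap W \cap N$ and $\ell \in J_+$ with $v_\ell \in W$, the difference $v - c_\ell v_\ell$ lies in $\on{Relint}(\sigma^\ell) \cap W \cap N_\bQ$, and $(\dagger)$ for $(\Sigma, W)$ applied to $\sigma^\ell \in \Sigma$ gives $\sigma^\ell \subseteq W$, hence $\sigma(\omega) = \sigma^\ell + \bR_{\geq 0} v_\ell \subseteq W$.

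For (2), I would recall from the argument of Proposition~\ref{div_contr}(2) that, on a $\bQ$-factorial toric variety with $\Delta \geq 0$, Propositions~\ref{Fdlt_prop}, \ref{Fdlt_ND_prop} and \ref{lc} combine to show that F-dlt is equivalent to ``non-dicritical and log canonical''. Since $\Sigma^+$ is simplicial, $X_{\Sigma^+}$ is $\bQ$-factorial; since $\psi$ is an isomorphism in codimension one and $\Sigma(1) = \Sigma^+(1)$, the coefficients of $\Delta^+ = \psi_*\Delta$ and of $\Delta$ agree along every prime torus-invariant divisor, so $\Delta^+ \geq 0$ and log canonicity (characterized in Proposition~\ref{lc} by these coefficients) is inherited. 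Combined with non-dicriticality from (1), this yields F-dlt for $(\cF_{W,\Sigma^+}, \Delta^+)$. I expect the main technical obstacle to be keeping the case analysis clean for $\Sigma_0$, where one must treat the non-simplicial cone $\sigma(\omega)$ and its proper faces uniformly; the key observation that makes everything work is that whenever $\ell \in J_+$ belongs to $J$, the face $\sigma_{J \setminus \{\ell\}}$ is automatically a face of $\sigma^\ell \in \Sigma$, allowing reduction to $(\dagger)$ for the original fan $\Sigma$.
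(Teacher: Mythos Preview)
Your proposal is correct and follows essentially the same approach as the paper: the core step (use Lemma~\ref{lc_positive_ray} to find $\ell\in J_+$ with $v_\ell\in W$, then pass to $\sigma_{J\setminus\{\ell\}}\preceq\sigma^\ell\in\Sigma$ and invoke $(\dagger)$ for $\Sigma$) is identical, as is the deduction of (2) via Propositions~\ref{Fdlt_prop}, \ref{Fdlt_ND_prop}, and \ref{lc}. The only difference is organizational: the paper checks $(\dagger)$ only for $\Sigma_0$ and then obtains the result for $\Sigma^+$ immediately from Lemma~\ref{lem_property_dagger}(1) (refinements preserve $(\dagger)$), whereas you verify $\Sigma^+$ separately and single out $\sigma(\omega)$ as its own case---both of which are subsumed by the paper's uniform treatment of all $\sigma_J\subseteq\sigma(\omega)$.
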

\begin{proof} 
    Notation as in subsection~\ref{subsec_notations}. 
    For (1), it suffices to show that $\mc F_{W,\Sigma_0}$ is non-dicritical, since the condition $(\dagger)$ is preserved under taking refinements. By assumption and the discussion at the beginning of this subsection, we need only to check that $(\tau,W)$ is non-dicritical for any $\tau\in\Sigma_0$ where $\tau\subseteq\sigma(\omega)$ for some $\omega\in\Sigma(n-1)$ with $[V_{\omega}]\in R$. We can write $\tau=\sigma_J$ for some $J\subseteq [1,n+1]\cap\mb N$ as $\Sigma(1)=\Sigma_0(1)$. 
        
    If $J_+\nsubseteq J$, then we can choose $j\in J_+\setminus J$ so that $\sigma_J\preceq\sigma^j\in\Sigma$. Hence $(\sigma_J,W)$ is non-dicritical by the condition $(\dagger)$ for $(\Sigma,W)$.
    Now assume that $J_+\subseteq J$. Suppose we have an element $v\in\on{Relint}(\sigma_J)\cap W\cap N$. By Lemma~\ref{lc_positive_ray}, there exists $\ell\in J_+$ such that $v_\ell\in W$. Then there is a constant $c\in\bQ$ such that $v'=v-cv_{\ell}\in\on{Relint}(\sigma_{J\setminus\{\ell\}})\cap W\cap (N\otimes\bQ)$ as $|J|\geq |J_+|\geq 2$, and therefore,  $\on{Relint}(\sigma_{J\setminus\{\ell\}})\cap W\cap N\neq\emptyset$. Since $\sigma_{J\setminus\{\ell\}}\preceq\sigma^\ell\in\Sigma$, by the condition $(\dagger)$ for $(\Sigma,W)$, we have $\sigma_{J\setminus\{\ell\}}\subseteq W$. Hence $\sigma_J=\sigma_{J\setminus\{\ell\}}+\mb R_{\geq 0}v_\ell\subseteq W$. That is, $(\sigma_J,W)$ is non-dicritical.

    (2) is similar to Proposition~\ref{div_contr}(2).
\end{proof}

\subsection{Cone Theorem}
The goal of this subsection is to prove the cone theorem for log canonical toric foliated pairs (Theorem~\ref{conethm}). 
In Definition~\ref{tangency}, the notion of tangency is discussed when the subvariety is not completely contained in the singular locus.  
The following definition removes this restriction and allows us to talk about tangency for an arbitrary subvariety. For any coherent sheaf $\mc H$ on a normal variety $X$, we write $\mc H(p):=\mc H_p\otimes_{\mc O_{X,p}}\mb C(p)$. 

\begin{definition}\label{defn_tang_general}
    Let $\cF$ be a foliation of any rank on a normal variety $X$. 
    A subvariety $Z\subseteq X$ is \emph{tangent to $\cF$} if there exist a birational morphism $\pi\colon X'\to X$ and a prime divisor $E\subseteq X'$ with $c_X(E)=Z$ which satisfy the following: 
    For any general point $q\in E$, the composition map $\cT_E(q)\cap\cF'(q)\hookrightarrow\cT_E(q) \to \cT_Z(p)$ is surjective where $p=\pi(q)$ and $\cF'=\pi^{-1}\cF$.
\end{definition}

\begin{remark}\label{rmk_tang_general}\hspace{1em}
    \begin{enumerate}
        \item In Definition~\ref{defn_tang_general}, it suffices to find one point  $q\in E\setminus\left(\on{Sing}(X')\cup\on{Sing}(E)\cup\on{Sing}(\cF')\right)$ such that $\pi(q):=p\notin\on{Sing}(Z)$ and that the composition map $\cT_E(q)\cap\cF'(q)\hookrightarrow\cT_E(q) \to \cT_Z(p)$ is surjective.

        \item When the subvariety $Z$ is not contained in $\on{Sing}(X)\cup\on{Sing}(\cF)$, then Definition~\ref{tangency} and Definition~\ref{defn_tang_general} are equivalent. 

        \item 
        Tangency of a subvariety possibly contained in the singular locus has been discussed in the literature under extra assumptions.
        \begin{itemize}
            \item \cite[Definition 3.2]{wang2023toric}: $\dim Z=1$.
            The definition in loc. cit. is slightly different from ours.  
            \item \cite[Subsection 3.4]{ambro2021positivity}: $\mc F$ is algebraically integrable.
            \item \cite[Definition 2.12]{CS}: $\mc F$ is non-dicritical of corank one.
            In Proposition~\ref{tang_CS}, we show that, under the same assumption, this coincides with Definition~\ref{defn_tang_general}. 
        \end{itemize}
    \end{enumerate} 
\end{remark}

The following Lemma is the toroidal analog of \cite[Remark 2.16]{CS}. 
\begin{lemma}\label{lem_tang_exc_inv}
    Let $\cF$ be a non-dicritical toroidal foliation of corank one on a normal variety $X$ with extended complex $(\Delta_X,W)$, and let $Z\subseteq X$ be a subvariety. 
    Assume that there is a prime divisor $E$ over $X$ such that $c_X(E)=Z$ and $E$ is foliation invariant.
    Then, for any prime divisor $G$ over $X$ with $c_X(G)=Z$, we have that $G$ is foliation invariant.  
\end{lemma}

\begin{proof}
    If $Z$ is a divisor, then there is nothing to prove. Hence we assume that $Z$ has codimension at least two. 
    Since $c_X(G)=c_X(E)=Z$, after replacing $Z$ by its Zariski open subset, there is a birational morphism $\pi\colon \widetilde{X}\to X$ such that $\widetilde{X}$ is smooth, $\widetilde{\cF}:=\pi^{-1}\cF$ is toroidal and non-dicritical whose associated extended complex is $(\Delta',W')$, and there is a sequence of smooth exceptional divisors $E=V(\rho_1)$, $\ldots$, $G=V(\rho_k)$ where $\rho_i\in\Delta'(1)$ for $1\leq i\leq k$ such that $V(\rho_j)\cap V(\rho_{j+1})$ is a prime divisor in $V(\rho_j)$ and $V(\rho_{j+1})$ that dominates $Z$ for each $1\leq j\leq k-1$.
    It suffices to prove that $G$ is foliation invariant assuming that $G=D_2$.

    Suppose on the contrary that $G$ is foliation non-invariant. 
    Then $\cG:=\widetilde{\cF}\vert_{G}$ is a foliation of corank one on $G$. 
    Moreover, $\dim Z\geq 1$ as $\cF$ is non-dicritical. 
    By Lemma~\ref{lem:blowup_toroidal_model}, $(\Delta',W')$ satisfies the condition $(\dagger)$ and thus $E\cap G$ is not contained in $\on{Sing}(\widetilde{\cF})$ by Proposition~\ref{singularlocus}. 
    Moreover, $E\cap G$ is invariant under $\cG$ as $E$ is invariant under $\widetilde{\cF}$. 
    Also, since the general fiber of $\psi$ is tangent to $\widetilde{\cG}$, by \cite[Lemma 6.7]{ARAUJO201370}, there is an induced foliation $\cH$ of corank one on $Z$ such that $\psi^{-1}\cH=\cG$. 
    As $E\cap G$ is codimension one in $G$ and is invariant under $\cG$, $E\cap G = \psi^{-1}(S)$ where $S=\psi(E\cap G)$ is invariant under $\cH$. 
    Thus $S$ is codimension one in $Z$, which contradicts to that $E\cap G$ dominates $Z$.   
\end{proof}

\begin{proposition}\label{tang_CS}
    Let $\cF$ be a non-dicritical toroidal foliation of corank one on a normal variety $X$. 
    A subvariety $Z\subseteq X$ is tangent to $\cF$ if and only if for any birational morphism $\pi\colon X'\to X$ and any prime divisor $E$ on $X'$ such that $E$ dominates $Z$, we have $E$ is invariant under the pullback foliation $\cF'=\pi^{-1}\cF$. 
\end{proposition}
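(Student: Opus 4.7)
For the direction where every prime divisor $E$ over $X$ with $c_X(E)=Z$ is assumed to be $\cF'$-invariant, I would choose a birational morphism $\pi\colon X'\to X$ together with a prime divisor $E\subseteq X'$ satisfying $c_X(E)=Z$, where $X'$ is smooth, $E$ is smooth, and $\pi|_E\colon E\to Z$ is generically smooth; such a model exists by resolution of singularities. By hypothesis $E$ is $\cF'$-invariant, so at a general smooth point $q\in E$ lying outside $\on{Sing}(\cF')$, invariance gives $\cF'(q)\subseteq \cT_E(q)$. Since $\cF$ has corank one, both spaces have dimension $\dim X-1$, forcing $\cF'(q)=\cT_E(q)$. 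Hence $\cT_E(q)\cap\cF'(q)=\cT_E(q)$, and generic smoothness of $\pi|_E$ makes the induced map to $\cT_Z(p)$ surjective, verifying Definition~\ref{defn_tang_general}.

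For the converse, let $(X',E)$ witness the tangency of $Z$. By Lemma~\ref{lem_tang_exc_inv} it suffices to prove that $E$ itself is $\cF'$-invariant, since invariance then propagates to every prime divisor centered on $Z$. Suppose for contradiction that $E$ is not $\cF'$-invariant. After a further birational modification (on which non-invariance is preserved by the contrapositive of Lemma~\ref{lem_tang_exc_inv}), I may assume $X'$ and $E$ are smooth, a general fiber $F$ of $\pi|_E\colon E\to Z$ is smooth of dimension $c-1$, and a general smooth point $q\in E$ lies outside $\on{Sing}(\cF')$ and maps to a general smooth $p=\pi(q)\in Z$, where $c=\on{codim}_X Z$. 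If $c=1$ then $\cT_E(q)$ and $\cF'(q)$ are two distinct hyperplanes in $\cT_{X'}(q)$, so $\dim(\cT_E(q)\cap\cF'(q))=\dim X-2$, which cannot surject onto $\cT_Z(p)$ of dimension $\dim X-1$. If $c\geq 2$, non-dicriticality of $\cF$ yields $\cT_F(q)\subseteq\cF'(q)$ at a general smooth $q\in F$ (exactly the fact invoked in the proof of Lemma~\ref{lem_tang_exc_inv} Case 2), hence $\cT_F(q)\subseteq\cT_E(q)\cap\cF'(q)$. Since $\cT_F(q)$ is the kernel of $d(\pi|_E)_q$, the kernel of $\cT_E(q)\cap\cF'(q)\to\cT_Z(p)$ equals $\cT_F(q)\cap\cF'(q)=\cT_F(q)$, so rank-nullity combined with the assumed surjectivity forces
\[\dim(\cT_E(q)\cap\cF'(q)) \;\geq\; \dim Z+(c-1)\;=\;\dim X-1\;=\;\dim E.\]
Since $\cT_E(q)\cap\cF'(q)\subseteq \cT_E(q)$, equality holds, so $\cT_E(q)\subseteq\cF'(q)$; equality of dimensions yields $\cT_E(q)=\cF'(q)$, which is the infinitesimal expression of $E$ being $\cF'$-invariant at $q$---a contradiction.

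The principal obstacle is the step asserting $\cT_F(q)\subseteq\cF'(q)$, i.e., that a fiber of $\pi$ collapsed to a point of $Z$ is tangent to $\cF'$ at general smooth points. If $p\notin\on{Sing}(\cF)$ this is a direct local computation: in local coordinates at $p$ one may write $\cF=\ker(dz)$, so $z\circ\pi$ is constant along $F$ and its differential kills $\cT_F(q)$, giving $\cT_F(q)\subseteq\cF'(q)$. When $Z\subseteq\on{Sing}(\cF)$ the argument is more delicate and must use non-dicriticality substantively, but the very same statement is already invoked (without new input) in the proof of Lemma~\ref{lem_tang_exc_inv} Case 2, so in the write-up I would simply cite that argument. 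Aside from this pointwise tangency fact, the proof is a clean dimension count combined with Lemma~\ref{lem_tang_exc_inv}.
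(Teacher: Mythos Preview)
Your proposal is correct and follows essentially the same line as the paper's proof: both directions reduce the question to showing that the witnessing divisor $E$ is $\cF'$-invariant (using Lemma~\ref{lem_tang_exc_inv} to propagate), and that in turn is obtained from the key fact that general fibers of $\pi|_E$ are tangent to $\cF'$ (a consequence of non-dicriticality in corank one, invoked exactly as in the proof of Lemma~\ref{lem_tang_exc_inv}) together with the dimension count forcing $\cT_E(q)=\cF'(q)$. The only differences are cosmetic---you phrase the forward direction as a contradiction and split off the case $c=1$, while the paper argues directly and uniformly (the $c=1$ case being absorbed since then $\cT_F(q)=0$).
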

\begin{proof}
    We suppose that $Z$ is tangent to $\cF$. 
    Then there exist a birational morphism $\pi\colon X'\to X$ and a prime divisor $E\subseteq X'$ with $c_X(E)=Z$ such that for any general point $p\in Z$, the map $\cT_E(q)\cap\cF'(q)\hookrightarrow\cT_E(q) \to \cT_Z(p)$ is surjective for some general point $q\in(\pi\vert_E)^{-1}(p)$. 
    
    By Lemma~\ref{lem_tang_exc_inv}, it suffices to show the following: 
    \begin{claim}
        $E$ is $\cF'$-invariant. 
    \end{claim}
    \begin{claimproof}
        As $\cF$ is non-dicritical and of corank one, the general fiber $F$ of $\pi\vert_E$ is tangent to $\cF'$. 
        Let $F=(\pi\vert_E)^{-1}(p)$ for some $p\in Z$ and $q\in F$ be a general point. 
        Then we have $\cT_F(q) = \ker(\cT_E(q)\to\cT_Z(p))\subseteq\cF'(q)$ as $F$ is tangent to $\cF'$. 
        Hence $\cT_E(q) = \cT_E(q)\cap\cF'(q) + \cT_F(q)\subseteq\cF'(q)$ and therefore, they are equal since both are $k(p)$-vector spaces of dimension $n-1$.  
        As $p$ and $q$ are general, we have $E$ is $\cF'$-invariant. 
        This completes the proof of the claim. $\quad\blacksquare$
    \end{claimproof}

    On the other hand, suppose for any birational morphism $\pi\colon X'\to X$ and any prime divisor $E$ on $X'$ such that $E$ dominates $Z$, we have $E$ is invariant under the pullback foliation $\cF'$. 
    Then for a general point $p\in Z$ and a general point $q\in (\pi\vert_E)^{-1}(p)$, we have $\cT_E(q) = \cF'(q)$. 
    Hence the map $\cT_E(q)\cap\cF'(q) = \cT_E(q) \to \cT_Z(p)$ is surjective and therefore, $Z$ is tangent to $\cF$. 
\end{proof}

\begin{proposition}\label{property_tangent}
    Let $X$ be a normal variety and $\cF\subseteq\cG$ be two foliations on $X$. 
    We have some properties:
    \begin{enumerate}
        \item If a subvariety $Z\subseteq X$ is tangent to $\cF$, then $Z$ is tangent to $\cG$. 
        \item Let $\pi\colon Y\to X$ be a birational morphism and $Z\subseteq Y$ be a subvariety tangent to $\pi^{-1}\cF$. 
        Then $\pi(Z)$ is tangent to $\cF$. 
    \end{enumerate}
\end{proposition}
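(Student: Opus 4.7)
The plan is to extract both parts directly from Definition~\ref{defn_tang_general} together with Remark~\ref{rmk_tang_general}, which lets us replace the ``any general point'' condition by the existence of a single suitable point. The heart of both arguments is to note that the intersection $\cT_E\cap\cF'$ only grows when $\cF$ does, and that a composition of two generically-surjective maps at a generic point remains surjective.

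For (1), I would start by choosing a birational morphism $\pi\colon X'\to X$ and a prime divisor $E\subseteq X'$ with $c_X(E)=Z$ that witnesses the tangency of $Z$ to $\cF$. Writing $\cF'=\pi^{-1}\cF$ and $\cG'=\pi^{-1}\cG$, the inclusion $\cF\subseteq\cG$ passes to saturations in $\cT_{X'}$, so $\cF'\subseteq\cG'$, and hence at every point $q\in X'$ one has $\cT_E(q)\cap\cF'(q)\subseteq\cT_E(q)\cap\cG'(q)$. Thus if the left-hand side surjects onto $\cT_Z(\pi(q))$ for a general $q\in E$, so does the right-hand side; the same triple $(\pi,E,q)$ therefore witnesses tangency of $Z$ to $\cG$.

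For (2), let $\psi\colon Y'\to Y$ and $E\subseteq Y'$ be the birational morphism and prime divisor witnessing tangency of $Z$ to $\pi^{-1}\cF$, and set $\pi'=\pi\circ\psi\colon Y'\to X$. A short topological argument gives $c_X(E)=\overline{\pi(\psi(E))}=\overline{\pi(Z)}=\pi(Z)$, since $\psi(E)$ is dense in $Z$ and $\pi$ is continuous. Because $(\pi')^{-1}\cF=\psi^{-1}(\pi^{-1}\cF)$, the tangency hypothesis says that at a general $q\in E$ the composition
\[
\cT_E(q)\cap (\pi')^{-1}\cF(q)\;\hookrightarrow\;\cT_E(q)\;\longrightarrow\;\cT_Z(\psi(q))
\]
is surjective. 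I would then compose with the differential of $\pi|_Z\colon Z\to \pi(Z)$: since $\pi|_Z$ is dominant onto $\pi(Z)$, it is generically smooth, so $\cT_Z(p')\to\cT_{\pi(Z)}(\pi(p'))$ is surjective for a general $p'\in Z$.

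The only thing to check is that both genericity conditions can be met by a single point $q\in E$: namely, $q$ lies in the locus where the first map is surjective (a nonempty open subset of $E$ by the tangency of $Z$ to $\pi^{-1}\cF$), and $\psi(q)$ lies in the locus where $\pi|_Z$ is smooth with surjective differential onto $\cT_{\pi(Z)}$. Since $\psi|_E$ is dominant onto $Z$, the preimage under $\psi$ of this second open subset meets the first open subset, so such $q$ exists. Invoking Remark~\ref{rmk_tang_general}(1), this single $q$ suffices to conclude that $\pi(Z)$ is tangent to $\cF$. No step is especially hard; the only subtlety is precisely this simultaneous genericity, which is handled by intersecting two dense opens in $E$.
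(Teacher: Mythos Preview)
Your proposal is correct and follows essentially the same approach as the paper: for (1) you use the same inclusion $\cT_E(q)\cap\cF'(q)\subseteq\cT_E(q)\cap\cG'(q)$ to transport surjectivity, and for (2) you compose with the generically surjective differential of $\pi|_Z$. Your treatment of the simultaneous genericity in (2) is slightly more explicit than the paper's, but the argument is the same.
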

\begin{proof}
    For (1), if $Z$ is tangent to $\cF$, then there exist a birational morphism $\pi\colon X'\to X$ and a prime divisor $E\subseteq X'$ with $c_X(E)=Z$ such that for a general point $p\in Z$ and a general point $q\in(\pi\vert_E)^{-1}(p)$, the map $\phi\colon \cT_E(q)\cap\cF'(q)\hookrightarrow\cT_E(q)\to \cT_Z(p)$ is surjective where $\cF'=\pi^{-1}\cF$. 
    Let $\cG'=\pi^{-1}\cG$. 
    Since $\cF\subseteq\cG$, we have $\cF'(q)\subseteq\cG'(q)$ and thus $\psi\colon\cT_E(q)\cap\cG'(q)\hookrightarrow\cT_E(q)\to\cT_Z(p)$ is surjective as $\phi$ factors through $\psi$. 

    For (2), since $Z$ is tangent to $\pi^{-1}\cF$, there exist a birational morphism $\psi\colon Y'\to Y$ and a prime divisor $E\subseteq Y'$ with $c_Y(E)=Z$ such that for a general point $p\in Z$ and a general point $q\in(\pi\vert_E)^{-1}(p)$, the map $\cT_E(q)\cap\widetilde{\cF}(q)\hookrightarrow\cT_E(q)\to\cT_Z(p)$ is surjective where $\widetilde{\cF} = (\pi\circ\psi)^{-1}\cF$. 
    We may assume that $p$ and $q$ are general so that $\pi(p)$ and $q\in(\pi\circ\psi\vert_E)^{-1}(\pi(p))$ are general. 
    Then $\cT_Z(p)\to\cT_{\pi(Z)}(\pi(p))$ is surjective and so is $\cT_E(q)\cap\widetilde{\cF}(q)\hookrightarrow\cT_E(q)\to\cT_Z(p)\to\cT_{\pi(Z)}(\pi(p))$. 
    Therefore, $\pi(Z)$ is tangent to $\cF$. 
\end{proof}

We have the following proposition generalizing \cite[Lemma 3.3]{wang2023toric}:

\begin{proposition}\label{tang_combinatoric}
    Let $\cF_W$ be a toric foliation on a toric variety $X_\Sigma$ of a fan $\Sigma$ in $N\otimes\bR$ where $W\subseteq N\otimes\bC$ is a complex vector subspace. 
    Then for any cone $\tau\in\Sigma$, $V_\tau$ is tangent to $\cF_W$ if and only if $W+\bC\tau=N\otimes\bC$. 
\end{proposition}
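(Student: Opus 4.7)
The plan is to prove both directions via local computations at a general point of $O_\tau$ after performing the star subdivision of $\Sigma$ at a ray $\rho = \bR_{\geq 0}u$ generated by a primitive element $u \in \on{Relint}(\tau) \cap N$.

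For the direction $W + \bC\tau = N_\bC \Rightarrow V_\tau$ is tangent to $\cF_W$, I will construct an explicit birational witness. The star subdivision produces a toric birational morphism $\pi \colon X_{\Sigma^*(\rho)} \to X_\Sigma$ with exceptional divisor $D_\rho$ satisfying $\pi(D_\rho) = V_\tau$, since the minimal cone of $\Sigma$ containing $\rho$ is $\tau$. At a general $q \in D_\rho$ lying in the maximal torus $T_{N/\bZ u}$ of $D_\rho$, with $p := \pi(q) \in O_\tau$ smooth in $V_\tau$, I apply Lemma~\ref{local_gen} at the ray $\rho$ to write down explicit generators of $\cF_W$ on $U_\rho$. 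A direct computation exploiting $\chi^{m_\rho}(q) = 0$ shows that every $\delta_v$ for $v \in N_\bC$ is tangent to $D_\rho$ at $q$, while the extra generator $\tfrac{1}{\chi^{m_\rho}}\delta_u = \partial/\partial\chi^{m_\rho}$ (present only when $u \in W$) is transverse. Under the canonical identifications $\cT_{D_\rho, q} \cong N_\bC/\bC u$ and $\cT_{V_\tau, p} \cong N_\bC/\bC\tau$, the subspace $\cF_W(q) \cap \cT_{D_\rho}(q)$ corresponds to $(W + \bC u)/\bC u$, whose further image in $\cT_{V_\tau, p}$ is $(W + \bC\tau)/\bC\tau$. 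The hypothesis $W + \bC\tau = N_\bC$ makes this surjective onto $\cT_{V_\tau, p}$, and Remark~\ref{rmk_tang_general}(1) yields tangency.

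For the converse, suppose tangency is witnessed by some birational $\pi' \colon X' \to X_\Sigma$ and a prime divisor $E \subseteq X'$ with $c_{X_\Sigma}(E) = V_\tau$, and consider a general point $q \in E$ (outside the relevant singular loci) with $p = \pi'(q) \in O_\tau$ smooth in $V_\tau$. Since $\cF' := (\pi')^{-1}\cF_W$ is the saturated kernel of $\cT_{X'} \to (\pi')^*(\cT_{X_\Sigma}/\cF_W)$ and $\cT_{X_\Sigma}/\cF_W$ is torsion-free (by the saturatedness of $\cF_W$), at a general $q \in E$ the image $d\pi'_q(\cF'(q))$ lies in the image of $\cF_W(p) \to \cT_{X_\Sigma}(p)$. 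Combined with $d\pi'_q(\cT_E(q)) \subseteq \cT_{V_\tau, p}$, the image of $\cT_E(q) \cap \cF'(q)$ in $\cT_{V_\tau, p}$ is contained in $\on{Im}(\cF_W(p) \to \cT_{X_\Sigma}(p)) \cap \cT_{V_\tau, p}$; tangency forces equality. It then suffices to show that this intersection equals $(W + \bC\tau)/\bC\tau$ under $\cT_{V_\tau, p} \cong N_\bC/\bC\tau$, immediately yielding $W + \bC\tau = N_\bC$. When $\tau$ is smooth, this follows from a direct calculation with the local generators from Remark~\ref{local_gen_extend} at $p$. For a non-smooth $\tau$, I reduce to the smooth case by passing to a smooth toric refinement $\mu \colon X_{\widetilde\Sigma} \to X_\Sigma$ together with a sub-cone $\widetilde\tau \in \widetilde\Sigma$ satisfying $\widetilde\tau \subseteq \tau$ and $\dim\widetilde\tau = \dim\tau$; since $\bC\widetilde\tau = \bC\tau$, the target combinatorial condition is unchanged, and the tangency of $V_\tau$ transfers to the required statement on $V_{\widetilde\tau}$ via a common birational model dominating $X'$ and $X_{\widetilde\Sigma}$ together with a strict-transform analysis of $\mu^{-1}(V_\tau)$.

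The main obstacle will be making the reduction to smooth $\tau$ precise in the converse direction: one must verify that the strict transform of the witness $E$ on the smooth toric refinement lands in $V_{\widetilde\tau'}$ for some smooth cone $\widetilde\tau' \subseteq \tau$ with $\dim\widetilde\tau' = \dim\tau$ (rather than in a higher-dimensional $V_{\widetilde\tau''}$ with $\dim\widetilde\tau'' < \dim\tau$), so that the local calculation at a smooth point of $O_{\widetilde\tau'}$ genuinely captures the tangency of $V_\tau$.
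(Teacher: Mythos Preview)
Your forward direction is correct and more direct than the paper's: you exhibit a single toric witness (the star subdivision at $u$) and compute fibrewise, whereas the paper first passes to an auxiliary $W''\subseteq W$ with $W''\cap\bC\tau=0$, identifies $V_{\tau'}$ (for a smooth subcone $\tau'$) as a leaf closure of $\cF_{W''}$ on a resolution, and then transfers back via Proposition~\ref{property_tangent}.

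The converse has a genuine gap at the containment $d\pi'_q(\cF'(q))\subseteq\on{Im}\big(\cF_W(p)\to\cT_{X_\Sigma}(p)\big)$. Your justification---``$\cT_{X_\Sigma}/\cF_W$ is torsion-free''---does not suffice: torsion-freeness is not preserved under pullback, and precisely when $V_\tau\subseteq\on{Sing}(\cF_W)$ (which by Proposition~\ref{singularlocus} is the typical situation, occurring whenever $W\cap\bC\tau$ is not spanned by a subset of $\tau(1)$) the sheaf $(\pi')^*(\cT_{X_\Sigma}/\cF_W)$ acquires torsion along $E$. Then the \emph{saturated} kernel $\cF'$ may strictly contain the honest kernel, and you have not argued that the composition $\cF'\to(\pi')^*(\cT_{X_\Sigma}/\cF_W)$ still vanishes at a general $q\in E$. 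Since the witness $(\pi',E)$ is arbitrary and not toric, Lemma~\ref{local_gen} is unavailable here; you would need an additional argument (for instance that $d\pi'(\cT_{X'})$ meets the torsion of $(\pi')^*(\cT_{X_\Sigma}/\cF_W)$ trivially), which is itself non-obvious. Together with the reduction to smooth $\tau$ that you already flag as incomplete, the converse is not established.

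The paper sidesteps this entirely. For $W+\bC\tau\neq N_\bC$ it chooses a hyperplane $W'\supseteq W$ containing $\bC\tau$, so that $\cF_{W'}$ is corank one and non-dicritical near $V_\tau$; the star subdivision then produces a non-$\cF_{W'}$-invariant exceptional divisor over $V_\tau$, and Proposition~\ref{tang_CS} (whose hypothesis is exactly non-dicriticality in corank one, and whose conclusion controls \emph{all} exceptional divisors) gives that $V_\tau$ is not tangent to $\cF_{W'}$, hence not to $\cF_W$ by Proposition~\ref{property_tangent}(1). The reduction to corank one is what lets the paper handle an arbitrary birational $\pi'$ without the fibrewise containment you attempt.
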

\begin{proof}
    Suppose $W+\bC\tau\neq N\otimes\bC$. 
    Then we can choose a complex vector subspace $W'\subseteq N\otimes\bC$ of dimension $n-1$ such that $W\subseteq W'$ and $W'+\bC\tau\neq N\otimes\bC$. 
    Thus, $\bC\tau\subseteq W'$ and hence $(\tau,W')$ is non-dicritical. 
    We pick a primitive element $u\in\on{Relint}(\tau)\cap N$ and let $\Sigma'$ be the star subdivision of $\Sigma$ for the ray $\rho=\bR_{\geq 0}u$. 
    Then on $X_{\Sigma'}$, $D_\rho$ is non-$\cF_{W',\,\Sigma'}$-invariant as $\rho\subseteq\tau\subseteq W'$ and therefore on $X_\Sigma$, $V_\tau$ is not tangent to $\cF_{W'}$ by Proposition~\ref{tang_CS}. 
    Therefore, by Proposition~\ref{property_tangent}(1), $V_\tau$ is not tangent to $\cF_W$ either. 

    On the other hand, if $W+\bC\tau = N\otimes\bC$, then we can choose a complex vector subspace $W''\subseteq W$ such that $W''+\bC\tau=N\otimes\bC$ and $W''\cap \bC\tau=\{0\}$. 
    Taking a toric resolution $X_{\Sigma''}\to X_\Sigma$, $\tau$ is divided into several smooth cones. 
    Let $\tau'$ be one of those cones whose dimension is $\dim\tau$. 
    Then we have $W''+\bC\tau'=N\otimes\bC$, and $W''\cap\bC\tau'=\{0\}$. 
    Since $W''\cap\bC\tau'=\{0\}$, we have that $V_{\tau'}$ is $\cF_{W'',\,\Sigma''}$-invariant and $V_{\tau'}\nsubseteq\on{Sing}(\cF_{W'',\,\Sigma''})$ by Proposition~\ref{singularlocus}. 
    Note that $\dim V_{\tau'}=n-\dim\tau=\dim_\bC W''$. 
    So $V_{\tau'}$ is tangent to $\cF_{W'',\,\Sigma''}$. 
    Therefore, $V_\tau$ is tangent to $\cF_{W''}$ by Proposition~\ref{property_tangent}(2). 
    Hence, by Proposition~\ref{property_tangent}(1), $V_\tau$ is tangent to $\cF_W$. 
\end{proof}

\begin{example}
    Let $N=\bZ e_1\oplus \bZ e_2\oplus \bZ e_3$, $\sigma=\on{Cone}(e_1,e_2,e_3)$, $\tau=\on{Cone}(e_1,e_2)$, and $W=\{(b_1,b_2,b_3)\in\bC^3\mid b_1+\pi b_2=0\}$. 
    Then $U_\sigma\cong\bA^3$ and $\cF_W$ is the toric foliation on $\bA^3$ given by $W$. 
    Note that $V_\tau\subseteq\on{Sing}(\cF_W)$ by Proposition~\ref{singularlocus}. 
    Let $\Sigma'$ be a refinement of $\Sigma$ and $\tau'\in\Sigma'\setminus\Sigma$. 
    Then we have a morphism $\pi\colon X_{\Sigma'}\to X_\Sigma$. 
    Assume that $V_{\tau'}$ dominates $V_\tau$. 
    Then $\tau'$ is a cone contained in $\tau$ of the same dimension, which implies $\mb C\tau=\mb C \tau'$. Since $W\cap\mb C\tau'$ is generated by an irrational ray in $N\otimes\bR$, we have $V_{\tau'}\subseteq\on{Sing}(\cF_{W,\,\Sigma'})$ by Proposition~\ref{singularlocus}. 
\end{example}

\begin{corollary}
    Let $\cF$ be a toroidal foliation on a normal variety $X$ with associated extended complex $(\Delta,W)$. 
    A subvariety $Z$ is tangent to $\cF$ if and only if for any general point $z$ on $Z$, there exist a semi-local model $(U_{\sigma,N},W_N)$ and a cone $\tau_Z\preceq\sigma$ such that $Z$ formally locally corresponds to $V_{\tau_Z}$ and $W_N+\bC\tau_Z = N\otimes\bC$. 
\end{corollary}
\begin{proof}
    (If part) Let $\pi\colon X'\to X$ be the blow-up along $Z$ with exceptional divisor $E$ and $\cF'=\pi^{-1}\cF$. 
    For any general point $z\in Z$, we take the base change to $U_{\sigma,N}$. 
    Then we may assume that $\pi$ is a toric morphism $X'_{\Sigma'}\to U_{\sigma,N}$ between toric varieties where $\Sigma'$ is a fan in $N\otimes\bR$ refining $\Sigma$ with support $|\Sigma'|=\sigma$. 
    Also there is a ray $\rho\in\Sigma'(1)$ so that its primitive element is contained in $\on{Relint}(\tau_Z)$. 
    Note that the complex vector subspaces $(W_N+\bC\rho)/\bC\rho$, $\bC\tau_Z/\bC\rho$, and $N\otimes\bC/\bC\rho$ of $N\otimes\bC/\bC\rho$ gives the foliations $\cF'\vert_E$, the foliation induced by the fibration $\pi\vert_E\colon E\to Z$, and $\cT_E$ on $E$, respectively. 
    As $W_N+\bC\tau_Z = N\otimes\bC$, we have $(W_N+\bC\rho)/\bC\rho + \bC\tau_Z/\bC\rho = N\otimes\bC/\bC\rho$ and hence the map $(W_N+\bC\rho)/\bC\rho \hookrightarrow N\otimes\bC/\bC\rho \to N\otimes\bC/\bC\tau_Z$ is surjective. 
    Therefore, for a general point $q\in E$, we have $\cT_E(q)\cap\cF'(q)\hookrightarrow\cT_E(q)\to \cT_Z(\pi(q))$ is surjective. 

    (Only if part) Now suppose $E$ is a divisor over $X$ with center $Z$ on $X$. 
    If for a general point $z\in Z$, any semi-local model $(U_{\sigma,N},W_N)$ satisfies $W_N+\bC\tau_Z\neq N\otimes\bC$. 
    After base change to $U_{\sigma,N}$, we may assume that $E$ is a divisor over $V_{\tau_Z}$. 
    By Proposition~\ref{tang_combinatoric}, $V_{\tau_Z}$ is not tangent to $\cF_{W_N}$ and thus, for any general point $q\in E$, the map $\cT_E(q)\cap\cF'(q)\hookrightarrow\cT_E(q)\to\cT_Z(\pi(q))$ is not surjective where $\cF'=\pi^{-1}\cF_{W_N}$. 
    By Remark~\ref{rmk_tang_general}(1), $Z$ is not tangent to $\cF$. 
\end{proof}

\begin{theorem}[Cone Theorem]\label{conethm}
    Let $(\cF_W,D)$ be a log canonical toric foliated pair on a complete $\bQ$-factorial toric variety $X_\Sigma$ with $D\geq 0$. 
    Then 
    $\overline{\on{NE}}(X)_{K_{\cF_W}+D<0} = \sum\bR_{\geq 0}[M_i]$ 
    where $M_i$ are torus invariant rational curves tangent to $\cF_W$. 
\end{theorem}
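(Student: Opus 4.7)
My plan is as follows. By the toric cone theorem \cite[Theorem 6.3.20]{CLS11}, $\overline{\on{NE}}(X_\Sigma)$ is a rational polyhedral cone generated by the classes $[V_\omega]$ for $\omega \in \Sigma(n-1)$, so the subcone $\overline{\on{NE}}(X_\Sigma)_{K_{\cF_W}+\Delta<0}$ is itself polyhedral and equals the Minkowski sum of its $(K_{\cF_W}+\Delta)$-negative extremal rays. It therefore suffices to exhibit, for each such extremal ray $R$, a torus-invariant rational curve $M$ tangent to $\cF_W$ with $[M] \in R$; taking these $M$'s as the family $\{M_i\}$ then finishes the proof.

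Fix a $(K_{\cF_W}+\Delta)$-negative extremal ray $R$ and pick $\omega \in \Sigma(n-1)$ with $R = \bR_{\geq 0}[V_\omega]$; adopt the notation of Subsection~\ref{subsec_notations}. Since $\omega$ is a wall of a complete simplicial fan, it is contained in exactly two top-dimensional cones of $\Sigma$ (both of which must be of the form $\sigma^j$ with $j \in J_+$), and in particular $|J_+| \geq 2$. By Lemma~\ref{lc_positive_ray}, there exists $\ell \in J_+$ with $v_\ell \in W$. Choose any $j \in J_+ \setminus \{\ell\}$, and set
\[
  \omega' := \sigma^\ell \cap \sigma^j = \on{Cone}\bigl(v_i \mid i \in [1,n+1]\cap\bN,\ i \neq \ell, j\bigr),
\]
a common facet of the $n$-dimensional simplicial cones $\sigma^\ell, \sigma^j \in \Sigma$, hence a wall in $\Sigma(n-1)$.

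To establish tangency of $V_{\omega'}$, the wall relation $\sum_i a_i v_i = 0$ together with $a_\ell, a_j > 0$ yields $v_\ell = -\tfrac{a_j}{a_\ell} v_j - \tfrac{1}{a_\ell} \sum_{i \neq \ell, j} a_i v_i$. Since $\sigma^j$ is top-dimensional and simplicial, $\{v_i \mid i \neq j\}$ is a $\bC$-basis of $N_\bC$, so $v_j \notin \bC\omega' = \on{Span}_\bC(v_i \mid i \neq \ell, j)$; using $a_j \neq 0$, this forces $v_\ell \notin \bC\omega'$ as well. As $v_\ell \in W$ and $\bC\omega'$ is a hyperplane of $N_\bC$ not containing $v_\ell$, I conclude $W + \bC\omega' = N_\bC$, so $V_{\omega'}$ is tangent to $\cF_W$ by Proposition~\ref{tang_combinatoric}.

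It remains to check $[V_{\omega'}] \in R$. Since $\omega' \subseteq \sigma^\ell, \sigma^j \subseteq \sigma(\omega)$, the curve $V_{\omega'}$ and both its endpoints $V_{\sigma^\ell}, V_{\sigma^j}$ lie over a single top-dimensional cone of $\Sigma_0$: in the flipping case $\sigma(\omega)$ is itself a top-dimensional cone of $\Sigma_0$; in the fiber-type case $\sigma(\omega) = \sigma_{J_0} \times U$ projects to a top-dimensional cone of $\Sigma_0$ in $\overline{N}_\bR$; and in the divisorial case $\sigma(\omega)$ sits inside the top-dimensional cone $\sigma^1$ of $\Sigma_0$, as described in Subsection~\ref{subsec_notations} and Propositions~\ref{div_contr}, \ref{MFS}, \ref{flip}. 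Consequently $\varphi_R$ collapses $V_{\omega'}$ to a point, so $[V_{\omega'}] \in \ker(\varphi_{R,*}) \cap \overline{\on{NE}}(X_\Sigma) = R$. The main obstacle I anticipate is this last step: checking uniformly across the three contraction types that $\omega' \subseteq \sigma(\omega)$ really forces the image $\varphi_R(V_{\omega'})$ to be a single point, which requires careful tracking of the fan $\Sigma_0$ in each case.
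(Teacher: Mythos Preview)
Your proposal is correct and follows essentially the same route as the paper: pick $\ell \in J_+$ with $v_\ell \in W$ via Lemma~\ref{lc_positive_ray}, pass to the wall $\omega' = \sigma^\ell \cap \sigma^j$ for some other $j \in J_+$, and use $v_\ell \in W \setminus \bC\omega'$ together with Proposition~\ref{tang_combinatoric} to get tangency. The paper takes $j=n+1$ (after arranging $\ell\neq n+1$), but this is immaterial.

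The only real difference is in the step you flag as the obstacle, namely $[V_{\omega'}] \in R$. Your case-by-case sketch is correct: in each contraction type the two top cones $\sigma^\ell,\sigma^j$ lie in (or project to) the same maximal cone of $\Sigma_0$, so the torus-fixed endpoints $V_{\sigma^\ell},V_{\sigma^j}$ of $V_{\omega'}\cong\bP^1$ have the same image, forcing the $T$-equivariant map $V_{\omega'}\to X_{\Sigma_0}$ to be constant. The paper avoids this analysis entirely by citing \cite[Proposition~14-1-5(i)]{matsuki2002mmp}, which asserts directly that every wall of the form $\sigma^i\cap\sigma^j$ with $i,j\in J_+$ yields a curve numerically proportional to $V_\omega$; this is exactly the statement that the wall relation for $\omega'$ is the \emph{same} relation $\sum a_i v_i=0$, just viewed with $v_\ell,v_j$ as the two ``extra'' vectors. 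Invoking that proposition removes your anticipated obstacle in one line.
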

\begin{proof}
    Let $R\subseteq\overline{\on{NE}}(X_\Sigma)$ be a $(K_{\cF_W}+D)$-negative extremal ray, and let $\omega\in\Sigma(n-1)$ be a wall such that $[V_\omega]\in R$. Notation as in subsection~\ref{subsec_notations}.
    Then by Lemma~\ref{lc_positive_ray}, there exists $\ell\in J_+$ such that $v_\ell\in W$.
    Without loss of generality, we may assume that $\ell\neq n+1$. Let $J=\{1,2,\ldots,\widehat{\ell},\ldots,n\}$. 
    By \cite[Proposition 14-1-5(i)]{matsuki2002mmp}, $[V_{\sigma_J}]$ lies in $R$. 
    We have $N\otimes\bC=\mb C v_\ell+\mb C\sigma_J\subseteq W+\mb C \sigma_J\subseteq N\otimes\bC$. Hence  
    $V_{\sigma_{J}}$ is tangent to $\mc F_W$ by Proposition~\ref{tang_combinatoric} and $R=\mb R_{\geq 0}[V_{\sigma_J}]$. 
\end{proof}

\appendix
\section{Appendix}\label{sec:Appendix}

In \cite{wang2023toric}, an alternative definition for non-dicriticality is presented. 
It is not immediately clear whether our definition is equivalent to the one provided in that work. 
However, we can establish the following equivalence for toric foliations on a $\bQ$-factorial toric variety:
\begin{proposition}\label{wang_defn}
    Let $X_\Sigma$ be a $\bQ$-factorial toric variety of a fan $\Sigma$ in $N\otimes\bR$ and $\cF_W$ be a toric foliation on $X_\Sigma$ where $W\subseteq N\otimes\bC$ is a complex vector subspace. 
    Then $(\Sigma,W)$ satisfies the condition $(\dagger)$ if and only if $\cF_W$ is non-dicritical in the sense of \cite[Definition 3.6]{wang2023toric}, which requires all exceptional divisors over the singular locus of the foliation is foliation invariant. 
\end{proposition}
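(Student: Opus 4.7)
The plan is to prove both implications separately; the easier direction is combinatorial, while the harder one reduces the $\bQ$-factorial case to the smooth case via the finite toric covering of Lemma~\ref{fol_sing_lem}.

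\emph{Wang's condition $\Rightarrow (\dagger)$.} I argue the contrapositive. Suppose some $\sigma\in\Sigma$ violates $(\dagger)$: there exists a primitive $v\in\on{Relint}(\sigma)\cap W\cap N$ yet $\sigma\nsubseteq W$. Let $\rho=\bR_{\geq 0}v$; the star subdivision of $\Sigma$ at $\rho$ yields a toric birational morphism extracting $D_\rho$, which by Corollary~\ref{basics} is non-$\cF_W$-invariant (as $\rho\subseteq W$), and whose center on $X_\Sigma$ is $V_\sigma$. Since $X_\Sigma$ is $\bQ$-factorial, $\sigma$ is simplicial and its primitive generators are $\bC$-linearly independent. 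Writing $v=\sum_{\rho'\in\sigma(1)}a_{\rho'}v_{\rho'}$ with all $a_{\rho'}>0$ and setting $I=\{\rho'\in\sigma(1):\rho'\subseteq W\}\subsetneq\sigma(1)$, linear independence forces $v\notin\on{Span}_\bC(I)$, so $W\cap\bC\sigma\supsetneq\on{Span}_\bC(I)$; linear independence also shows that if $\on{Span}_\bC(S)=W\cap\bC\sigma$ for $S\subseteq\sigma(1)$ then necessarily $S=I$, which was just ruled out. By Proposition~\ref{singularlocus}, $V_\sigma\subseteq\on{Sing}(\cF_W)$, so $D_\rho$ is a non-$\cF_W$-invariant exceptional divisor over $\on{Sing}(\cF_W)$, contradicting Wang's condition.

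\emph{$(\dagger)\Rightarrow$ Wang's condition.} Let $E$ be an exceptional divisor on some model $\pi\colon Y\to X_\Sigma$ with $c_X(E)\subseteq\on{Sing}(\cF_W)$; I must show $E$ is $\cF_W$-invariant. Because invariance is a generic condition along $E$, I may work over an affine chart $U_\sigma$ for a top-dimensional simplicial $\sigma\in\Sigma$ containing a general point of $c_X(E)$. Lemma~\ref{fol_sing_lem} supplies a finite-index sublattice $N'\subseteq N$ making $\sigma$ smooth with respect to $N'$, hence a finite covering $p\colon U_{\sigma,N'}\to U_{\sigma,N}$ with $U_{\sigma,N'}$ smooth and $\on{Sing}(\cF_{W,N'})=p^{-1}(\on{Sing}(\cF_{W,N}))$. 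Since $W\cap N'\subseteq W\cap N$ and since the barycenters $\sum_{\rho\in\tau(1)}v_\rho$ of faces $\tau\preceq\sigma$ lie in $N'$, the condition $(\dagger)$ transfers from $(\Sigma,W)$ with lattice $N$ to $(\{\tau\preceq\sigma\},W)$ with lattice $N'$. Theorem~\ref{sm_nd_snd} then says $\cF_{W,N'}$ is strongly non-dicritical on the smooth variety $U_{\sigma,N'}$.

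Now base change $Y|_{U_\sigma}$ along $p$ and normalize to obtain $Y_0\to U_{\sigma,N'}$ birational; let $E_0$ be an irreducible component of the pullback of $E$. Since $c_{U_{\sigma,N'}}(E_0)\subseteq p^{-1}(c_{U_\sigma}(E))\subseteq\on{Sing}(\cF_{W,N'})$, strong non-dicriticality gives that $E_0$ is foliation invariant on $Y_0$. The morphism $Y_0\to Y|_{U_\sigma}$ is finite and étale at a general point of $E_0$; the pullback foliation on $Y_0$ via $Y_0\to U_{\sigma,N'}$ agrees with that via $Y_0\to Y\to U_{\sigma,N}$ by functoriality (both extend the same $T$-invariant foliation on the common dense torus), so invariance of $E_0$ checked at an étale general point descends to invariance of $E$. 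The main obstacle is this last descent step, which requires carefully matching the two pullback-foliation structures on $Y_0$ and transferring invariance through a finite generically étale morphism; this is formal but needs some bookkeeping, essentially because foliation invariance is a local condition on derivations preserving an ideal sheaf that can be checked on any étale neighborhood.
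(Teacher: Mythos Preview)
Your argument for Wang's condition $\Rightarrow(\dagger)$ is correct and matches the paper's. For the converse you take a genuinely different route: the paper passes to a smooth toric \emph{refinement} $f\colon X_{\Sigma'}\to X_\Sigma$, locates an $\cF$-invariant toric boundary divisor $D_\rho\subset X_{\Sigma'}$ containing $c_{X_{\Sigma'}}(E)$ (using that $\rho\cap\on{Relint}(\tau)\neq\emptyset$ together with $\tau\nsubseteq W$ and $(\dagger)$ forces $\rho\nsubseteq W$), and concludes via Corollary~\ref{Z_in_D_cor}; you instead pass to a finite toric \emph{cover}, invoke Theorem~\ref{sm_nd_snd} on the smooth model, and descend. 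Your strategy has the virtue of reusing the smooth case wholesale, while the paper's stays within birational toric geometry and avoids any finite-map bookkeeping.

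There is, however, a gap in your descent step: the claim that $q\colon Y_0\to Y$ is \'etale at a general point of $E_0$ is false in general. The cover $p\colon U_{\sigma,N'}\to U_{\sigma,N}$ is \'etale precisely away from $\on{Sing}(U_{\sigma,N})$ (no nontrivial element of $N/N'$ acts as a pseudoreflection, since each primitive ray generator $v_\rho$ already lies in $N'$ and remains primitive there), so after base change $q$ is \'etale only over $Y\setminus\pi^{-1}(\on{Sing}(U_{\sigma,N}))$. When $c_X(E)\subseteq\on{Sing}(X_\Sigma)$, which you have not excluded, one has $E\subseteq\pi^{-1}(\on{Sing}(U_{\sigma,N}))$ and $q$ is typically ramified along all of $E_0$; for instance, over an $A_1$ surface singularity the exceptional $\bP^1$ of the minimal resolution pulls back (after normalization) to a curve along which $q$ is the double cover $t\mapsto t^2$. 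The fix is straightforward: invariance of a prime divisor under a foliation is preserved by pullback along \emph{any} finite dominant morphism of normal varieties, with no \'etaleness needed. One checks this at the generic points: writing $q^*x=u^e\cdot(\text{unit})$ for local equations $x$ of $E$ and $u$ of $E_0$, pull back a local generator of $\cF_Y$ and saturate; the coefficient of $\partial_u$ vanishes along $u=0$ iff the coefficient of $\partial_x$ vanished along $x=0$. Alternatively, use that $q$ is a quotient by the finite group $G=N/N'$ acting compatibly with the foliation. Replacing your \'etaleness claim with this fact completes your argument.
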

\begin{proof}
    (If part) Suppose $(\Sigma,W)$ does not satisfy the condition $(\dagger)$. 
    Then there is a cone $\tau\in\Sigma$ such that $\on{Relint}(\tau)\cap W\cap N\neq\emptyset$ and $\tau\nsubseteq W$. 
    Let $u\in\on{Relint}(\tau)\cap W\cap N$ and $\Sigma'$ be the star subdivision of $\Sigma$ for the ray $\rho := \bR_{\geq 0}u$. 
    Then we have a birational morphism $\pi\colon X'_{\Sigma'} \to X_\Sigma$ with an exceptional divisor $D_\rho$ whose center on $X_\Sigma$ is $V_\tau$. 
    Since $\rho\subseteq W$, $D_\rho$ is non-$\cF_{W,\,\Sigma'}$-invariant by Corollary~\ref{basics}. 
    We will show that $\cF_W$ is dicritical in the sense of \cite[Definition 3.6]{wang2023toric} by showing $V_\tau\subseteq\on{Sing}(\cF_W)$. 
    
    Let $t=|\{\rho\in\Sigma(1)\mid \rho\subseteq W\cap\tau\}|$ and $\{\rho\in\Sigma(1)\mid \rho\subseteq W\cap\tau\} = \{\rho_1,\ldots,\rho_t\}$. 
    In particular, $\rho_i\in\tau(1)$ for all $1\leq i\leq t$. 
    So $t\leq \dim\tau$. 
    As $\tau\nsubseteq W$, we have $t<\dim\tau$ as $\tau$ is simplicial. 
    Since $u\in\on{Relint}(\tau)$, the vectors $u_i$ together with $u$ are linearly independent over $\bR$ and over $\bC$. 
    Thus, $\dim_\bC(W\cap\bC\tau)\geq t+1$ as $u_i$, $u\in W\cap\bC\tau$. 
    Hence, 
    \[\dim_\bC W + \dim_\bR\tau - \dim_\bC(W+\bC\tau) = \dim_\bC(W\cap\bC\tau) > t\] 
    and therefore, $\dim_\bC W+\dim_\bR\tau - t > \dim_\bC(W+\bC\tau)$. 
    By Proposition~\ref{singularlocus}, we have $V_\tau\subseteq\on{Sing}(\cF_W)$. 

    (Only if part) Now suppose that $E$ is an exceptional divisor whose center on $X_\Sigma$ is $Z$, which is contained in $\on{Sing}(\cF_W)$. 
    Since $\on{Sing}(\cF_W)$ is a torus invariant closed subset of codimension at least $2$, there exists a cone $\tau\in\Sigma(\ell)$ with $\ell\geq 2$ such that $Z\subseteq V_\tau\subseteq\on{Sing}(\cF_W)$. 
    We will show that $E$ is foliation invariant. 

    If $\tau\subseteq W$, then $|\{\rho\in\Sigma(1)\mid\rho\subseteq\tau\cap W\}| = \dim_\bR\tau$ as $\tau$ is simplicial. 
    Thus 
    \[\dim_\bR\tau+\dim_\bC W - |\{\rho\in\Sigma(1)\mid\rho\subseteq\tau\cap W\}| = \dim_\bC W = \dim_\bC(W+\bC\tau)\] 
    and hence, by Proposition~\ref{singularlocus}, $V_\tau\nsubseteq\on{Sing}(\cF_W)$, a contradiction.  
    Therefore, $\tau\nsubseteq W$. 
    
    Let $\Sigma'$ be a smooth fan in $N\otimes\bR$ refining $\Sigma$ and $f\colon X_{\Sigma'}\to X_{\Sigma}$ be the associated toric morphism, which is a toric resolution of $X_\Sigma$. 
    As $\dim\tau\geq 2$, we can consider a finer smooth fan of $\Sigma$ and assume that \[S:=\{\rho\in\Sigma'(1)\setminus\Sigma(1)\mid \rho\cap\on{Relint}(\tau)\neq\emptyset\}\neq\emptyset.\] 
    Then since $c_{X_{\Sigma'}}(E)$ is irreducible, we have $c_{X_{\Sigma'}}(E)\subseteq D_\rho$ for some $\rho\in S$, and by construction $f(D_\rho)=V_\tau\subseteq\on{Sing}(\cF_W)$. 
    Note that $c_{X_{\Sigma'}}(E)\subseteq D_\rho$. 
    By construction $f(D_\rho)=V_\tau\subseteq\on{Sing}(\cF_W)$. 
    
    If $D_\rho$ is not foliation invariant, then $\rho\subseteq W$ by Corollary~\ref{basics}. 
    Since $\rho\cap\on{Relint}(\tau)\neq\emptyset$ and $(\Sigma,W)$ satisfies the condition $(\dagger)$, we have $\tau\subseteq W$, which is impossible. 
    Thus, $D_\rho$ is foliation invariant. 
    Therefore, as $c_{X_{\Sigma'}}(E)\subseteq D_\rho$, $E$ is foliation invariant by Corollary~\ref{cor:contained_in_invariant_divisor}(2) and Zariski's lemma (cf. \cite[Lemma 2.45]{KM98}). 
\end{proof}

\bibliographystyle{alpha}
\bibliography{ToricToroidalFol}

\end{document}